\documentclass[11pt, a4paper]{amsart}
\pdfoutput=1

\usepackage[alphabetic]{amsrefs}
\usepackage{amsmath, amssymb, amsthm, bbm, multicol, rotating, enumitem, tabularx, epsfig}
\usepackage[all]{xy}
\usepackage[english]{babel}
\usepackage{textcomp}

\usepackage{graphicx, color}
\usepackage[T1]{fontenc}

\DeclareGraphicsRule{.pdftex}{pdf}{.pdftex}{}

\setlength{\headheight}{15pt}

\setcounter{tocdepth}{1}

\theoremstyle{plain}
\newtheorem{thm}{Theorem}[section]
\newtheorem{prop}[thm]{Proposition}
\newtheorem{lemma}[thm]{Lemma}
\newtheorem{sublem}[thm]{Sublemma}
\newtheorem{cor}[thm]{Corollary}
\newtheorem{quest}[thm]{Question}
\newtheorem*{claim}{Claim}

\newtheorem{metathm}{Theorem}

\theoremstyle{definition}
\newtheorem{defin}[thm]{Definition}
\newtheorem{constr}[thm]{Construction}
\newtheorem{rem}[thm]{Remark}

\newcommand{\E}{\mathbb{E}}
\newcommand{\Z}{\mathbb{Z}}
\newcommand{\R}{\mathbb{R}}
\newcommand{\Hyp}{\mathbb{H}}
\newcommand{\sys}{\hat}
\newcommand{\lk}{\mathrm{lk}}
\newcommand{\st}{\mathrm{st}}

\begin{document}

\title{Systolizing buildings}
\author{Piotr Przytycki and Petra Schwer}
\thanks{Partially supported by the Foundation for Polish Science. The first author was also partially supported by the National Science Centre DEC-2012/06/A/ST1/00259 and the second author by the DFG grant SCHW 1550-01.}

\begin{abstract}
We introduce a construction turning some Coxeter and Davis realizations of buildings into systolic complexes. Consequently groups acting geometrically on buildings of triangle types distinct from $(2,4,4)$, $(2,4,5)$, $(2,5,5)$, and various rank~$4$ types are systolic.
\end{abstract}
\maketitle

\section{Introduction}

A flag simplicial complex is \emph{systolic} if it is simply-connected and all of its vertex links are \emph{$6$--large}, that is all cycles of length $4$ or $5$ have diagonals.
A group is \emph{systolic} if it acts geometrically (i.e.\ properly and cocompactly by automorphisms) on a systolic complex. Systolic complexes and groups were introduced by Januszkiewicz and \'Swi\c{a}tkowski \cite{JS}, and independently by Haglund \cite{H} although their $1$--skeleta were studied earlier by Chepoi and others under the name of \emph{bridged graphs} (see~\cite{BC}). They constructed examples in high dimensions, established analogies with CAT(0) spaces, and studied their exotic filling properties.

In this article we systolize the Coxeter and Davis realizations of buildings of given Coxeter types. We say that a systolic complex $\sys X$ is a \emph{systolization} of its subcomplex $X$, if the action of the group of type preserving automorphisms of $X$ extends to $\sys X$, and $X\subset \sys X$ is a quasi-isometry.

We assume that all buildings have finite thickness. By the \emph{Coxeter realization} of a building we mean the usual realization of a building as a simplicial complex where an apartment is realized as the Coxeter complex of the appropriate Coxeter group. By the \emph{Davis realization} we mean the
subcomplex of the barycentric subdivision of the Coxeter realization obtained by removing
the open stars of vertices of infinite type. Note that this is the barycentric subdivision of what is usually called the Davis complex.

\begin{metathm}\label{thm:dim2}
Let $W$ be a triangle Coxeter group with finite exponents distinct from $(2,4,4), (2,4,5)$ and $(2,5,5)$. Then the Coxeter realization and the Davis realization of a building of type $W$ admit a systolization. Consequently the group $W$ and any group acting geometrically by type preserving automorphisms on a building of type $W$ is systolic.
\end{metathm}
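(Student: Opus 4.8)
The plan is to build $\sys X$ by a local, type preserving modification of $X$ carried out chamber by chamber, verify the three defining properties of a systolic complex, and read off the group statements.

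\emph{Reductions and the group statements.} If $W$ is finite then every building of type $W$ is finite and the assertions are degenerate, so assume $W$ infinite; since all $m_{ij}$ are finite, $(m_{12},m_{13},m_{23})$ is then a Euclidean or a hyperbolic triangle triple. It suffices to produce, for every building $X$ of type $W$ in either realization, a systolization $\sys X$. Indeed, $W$ acts geometrically---simply transitively on chambers---on the Coxeter complex $\Sigma(W)$, the thin building of type $W$; and if $G$ acts geometrically and type preservingly on a building $Y$ of type $W$, then, because the construction of $\sys Y$ replaces each chamber by a \emph{bounded} complex in a manner invariant under $\mathrm{Aut}_{\mathrm{type}}(Y)$, the induced action $G\curvearrowright\sys Y$ again has finite cell stabilizers and finitely many cell orbits, hence is geometric. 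So $W$, and each such $G$, is systolic.

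\emph{Construction of $\sys X$.} When every $m_{ij}\ge 3$ the Coxeter realization already works: a building is a flag complex; its three kinds of vertices have links that are the incidence graphs of generalized $m_{12}$--, $m_{13}$--, $m_{23}$--gons, of girths $2m_{ij}\ge 6$; and, $W$ being infinite, the realization carries a CAT$(0)$ metric with chambers isometric to the defining Euclidean or hyperbolic triangle, hence is simply connected. So it is systolic and $\sys X=X$. Otherwise some links have girth $<6$: for $m_{ij}=2$ the link of a vertex of the corresponding kind is the incidence graph of a generalized digon, i.e.\ a complete bipartite graph $K_{a,b}$, of girth $4$; and in the Davis realization---here just the barycentric subdivision of the Coxeter realization, since there are no infinite--type vertices---every edge--barycenter has a link of girth $4$. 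We repair this by \emph{blowing up} the closed star of each offending vertex: the cone on $K_{a,b}$ is replaced by the cone on a subdivision of $K_{a,b}$ of girth $\ge 6$, realised by subdividing the chambers meeting that vertex according to a fixed pattern depending only on $(m_{12},m_{13},m_{23})$; a simpler, unconditional analogue at edge--barycenters settles the Davis case. Since the pattern respects faces and types, the subdivided chambers glue over the building, $\mathrm{Aut}_{\mathrm{type}}(X)$ still acts, each chamber becomes a bounded complex so that $X\subset\sys X$ is a quasi--isometry, and $\sys X$ is $X$ up to subdivision and collapses, in particular contractible.

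\emph{The girth condition, and the exceptional types.} The crux is to check that after these blow--ups every vertex link of $\sys X$ has girth $\ge 6$ (equivalently is $6$--large, links here being graphs). The apex of a blown--up star has, by design, the chosen girth--$\ge 6$ graph as link; the original vertices of $K_{a,b}$ and the new subdivision vertices have links that are trees; the remaining issue is the link of an original vertex $v$ \emph{adjacent} to a blown--up one, which is the incidence graph of a generalized $m$--gon some of whose edges are subdivided because of the neighbouring blow--up (together with, in the Davis case, the interaction with nearby edge--barycenters). Requiring these modified links to have girth $\ge 6$ simultaneously for all kinds becomes a numerical constraint on $(2,m_{ik},m_{jk})$. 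It is met whenever $\min(m_{ik},m_{jk})=3$ or $\max(m_{ik},m_{jk})\ge 6$---a generalized triangle in the first case, a long generalized polygon in the second, leaving room to absorb the added subdivision vertices---but it cannot be met when both $m_{ik},m_{jk}\in\{4,5\}$, where the blow--up forced at the $\{i,j\}$--vertex leaves an essential $4$-- or $5$--cycle in some neighbouring link. These are precisely the types $(2,4,4)$, $(2,4,5)$ and $(2,5,5)$, which is why they are excluded. For all other triangle types the verification goes through: $\sys X$ is flag, simply connected, with all vertex links of girth $\ge 6$, hence systolic, and $X\subset\sys X$ is an $\mathrm{Aut}_{\mathrm{type}}(X)$--equivariant quasi--isometry, i.e.\ a systolization. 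Combined with the reductions above, the theorem follows. The main obstacle is designing the blow--up pattern so that the girth condition holds at all vertex kinds at once; the three exceptions are the shadow of its impossibility there.
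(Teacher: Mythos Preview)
Your approach differs from the paper's and, as written, has a genuine gap: you never specify the construction. The paper does not subdivide. For type $(2,k,m)$ with $k\ge 3,\ m\ge 6$, it \emph{adds} edges: two type-$\mathbf k$ vertices become adjacent in $\sys X$ whenever they share a type-$\mathbf 2$ neighbour (Construction~\ref{def:sys2}). The link of a type-$\mathbf 2$ vertex then becomes the join of a simplex with a discrete set, hence $\infty$--large; the links of the other two types are analysed via the CAT(0) metric on $X$. Convexity of stars (Lemma~\ref{le:convexStar}) yields uniqueness of the type-$\mathbf 2$ vertex between two given type-$\mathbf k$ neighbours (Corollary~\ref{cor:convexstar}), and convexity of a larger neighbourhood excludes $6$--cycles of alternating types $\mathbf 2,\mathbf k$ (Lemma~\ref{le:2kloops}); these combinatorial facts drive the link verification in Lemma~\ref{lem:vertexlinks} and have no analogue in your outline. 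For the Davis realisation the paper passes to the \emph{face complex} $\sys X^f$ of $\sys X$ (Proposition~\ref{lem:face}); your ``simpler, unconditional analogue at edge--barycenters'' is not a construction either.

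Your subdivision idea is not obviously salvageable, and the sketch contains an error. ``Subdivide according to a fixed pattern'' is not a proof until the pattern is exhibited and all links computed. The natural candidate---place a vertex $D$ on each type-$\mathbf{km}$ edge and cone to the opposite type-$\mathbf 2$ vertex, so that each type-$\mathbf 2$ link becomes the edge subdivision of $K_{a,b}$, of girth $8$---already fails: the edge carrying $D$ is shared by chambers with type-$\mathbf 2$ vertices $A_1,\dots,A_t$, and the link of $D$ in $\sys X$ is the complete bipartite graph on $\{B,C\}$ and $\{A_1,\dots,A_t\}$, of girth $4$. Your claim that ``the new subdivision vertices have links that are trees'' confuses the link inside a single blown-up star with the link in the whole complex. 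Any refinement to kill these $4$--cycles introduces further new vertices whose links must in turn be checked, and you give no mechanism for this to terminate or any indication of what the pattern actually is. (A smaller point: the paper does not prove that the three excluded types admit no systolization; only $(2,4,4)$ is shown non-systolic, by a separate flat-torus argument in Section~\ref{sec:244}. So casting the exceptions as ``the shadow of impossibility'' of your blow-up overstates what is established.)
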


\begin{metathm}
\label{thm:244}
The Coxeter group of type $(2,4,4)$ is not systolic.
\end{metathm}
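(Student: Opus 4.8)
The plan is to reduce the assertion to a group‑theoretic incompatibility. Recall that the Coxeter group $W$ of type $(2,4,4)$ is (isomorphic to) the wallpaper group $\mathrm{p4m}$: it is virtually $\Z^2$, its translation subgroup $T\cong\Z^2$ is normal of index $8$, and it contains a quarter‑turn $r$, so that conjugation by $r$ acts on $T$ as an automorphism of order $4$. I would deduce the theorem from the following claim: \emph{if a group $G$ acts geometrically on a systolic complex $X$ and $G$ is virtually $\Z^2$, then $G$ preserves a flat $F\subseteq X$ which is combinatorially the equilateral triangulation $\tilde A_2$ of $\E^2$.} Granting this, $W$ acts on $\tilde A_2$, hence maps to $\mathrm{Aut}(\tilde A_2)=\mathrm{p6m}$, whose point group is the dihedral group $D_6$ of order $12$; since $D_6$ has no element of order $4$, this must clash with the order‑$4$ conjugation action of $r$ on $T$. (The point groups of $(2,3,6)$ and $(3,3,3)$, namely $D_6$ and $D_3$, do embed in $D_6$, consistent with those groups being systolic by Theorem~A; it is the $4$‑fold symmetry present only in $(2,4,4)$ that produces the obstruction.)

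To prove the claim I would first apply Elsner's flat torus theorem for systolic complexes to the normal subgroup $T$, obtaining a $T$‑invariant, $T$‑cocompact flat $F\subseteq X$; here $F$ is a full subcomplex that is combinatorially a triangulated plane in which the link of every vertex is a cycle of length at least $6$ (a shorter link cycle would, by $6$‑largeness of links in $X$, have a diagonal outside $F$, contradicting isometric embedding). The stabilizers in $T$ of simplices of $F$ are finite subgroups of $\Z^2$, hence trivial, so $T$ acts freely and cocompactly on $F$ and the quotient is a triangulated torus; the combinatorial Gauss--Bonnet theorem then forces every vertex degree of $F$ to equal $6$, so $F$ is combinatorially $\tilde A_2$ and $\mathrm{Aut}(F)=\mathrm{p6m}$. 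Finally I would promote $T$‑invariance of $F$ to $W$‑invariance: since $T$ is normal, $W$ permutes the nonempty set of $T$‑invariant flats and this permutation action factors through the finite group $W/T$; combining the structure of the minimal displacement set $\mathrm{Min}(T)$ in $X$ --- in the spirit of the $\mathrm{CAT}(0)$ splitting $\mathrm{Min}(T)\cong F\times Y$ --- with a fixed‑point theorem for finite group actions on systolic complexes then produces a $W$‑invariant flat of the required form.

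With $F\cong\tilde A_2$ now $W$‑invariant, consider the homomorphism $\rho\colon W\to\mathrm{Aut}(F)=\mathrm{p6m}$ given by the action. Its restriction to $T$ is injective, because $\rho(T)$ acts cocompactly on the plane $F$ and any kernel would be a finite, hence trivial, subgroup of $\Z^2$; moreover $\rho(T)$ has finite index in $\mathrm{p6m}$. Let $\Lambda$ be the intersection of $\rho(T)$ with the (characteristic) translation lattice of $\mathrm{p6m}$: a finite‑index sublattice, invariant under $\rho(r)$. On the one hand, conjugation by $\rho(r)$ on $\Lambda$ is the restriction of the linear part of $\rho(r)$, which lies in the point group $D_6$ and therefore has order $1$, $2$, $3$, or $6$ --- an order unaffected by restricting to the finite‑index sublattice $\Lambda$. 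On the other hand, transported through the injection $\rho|_T$, this is conjugation by $r$ on an $r$‑invariant finite‑index sublattice of $T$, which has order $4$ for the same reason. This contradiction shows that $W$ is not systolic.

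I expect the genuine difficulty to lie in promoting a $T$‑invariant flat to a $W$‑invariant one: this requires real control over the family of all $T$‑invariant flats in $X$, together with an appropriate systolic fixed‑point theorem for the finite quotient $W/T$. By comparison, Elsner's flat torus theorem, the Gauss--Bonnet identification of the flat with $\tilde A_2$, and the arithmetic in $\mathrm{GL}_2(\Z)$ are routine.
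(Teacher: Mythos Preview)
Your strategy—detecting the order-$4$ point group of $W$ as incompatible with the $D_6$ symmetry of a systolic flat—is sound in spirit and different from the paper's route, but the step you flag is a real gap. The paper never tries to make $W$ itself act on a flat. With Coxeter generators $s,t,r$ (where $sr=rs$) it sets $g=rtst$, $h=tsrts$; then $W'=\langle g,h\rangle$ is the torsion-free Klein bottle group, and $t$ conjugates $g^2$ to $h^2$. Elsner's systolic flat torus theorem, applied to the torsion-free $W'$, yields a $W'$-invariant $\tilde A_2$ flat in which $h$ is a glide reflection and $g$ a perpendicular translation; the two admissible axis directions in $\tilde A_2$ have different quasi-convexity types in the ambient $1$-skeleton, and by results of Elsner conjugate elements must share this type, contradicting $tg^2t^{-1}=h^2$. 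So the obstruction is still ``$4$-fold versus $6$-fold'', but it is packaged through a single conjugacy relation and cited axis results, bypassing any need for a $W$-invariant flat.

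Your promotion from a $T$-invariant to a $W$-invariant flat is genuinely missing an ingredient: the $\mathrm{CAT}(0)$ splitting $\mathrm{Min}(T)\cong F\times Y$ has no established systolic analogue—Elsner's work controls individual flats and Hausdorff-close pairs, not a global product structure on the minset—so there is no ready-made space $Y$ on which to invoke the finite fixed-point theorem. A more direct rescue of your idea avoids $W$-invariance altogether: if the flat torus theorem for $T$ produces a flat realising minimal displacement, then $t\mapsto |t|_X$ is a hexagonal norm on $T$ (the word-metric translation length in $\tilde A_2$) which is at the same time conjugation-invariant in $W$; conjugation by your quarter-turn is then a linear isometry of a hexagonal norm, hence lies in $D_6$, which has no order-$4$ element. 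That is your contradiction, obtained from $F$ alone.
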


We believe that Coxeter groups of the other two excluded types are also not systolic.

\begin{metathm}\label{thm:dim3}
Let $W$ be a Coxeter group of rank $4$ with finite exponents. Assume that all of its special rank $3$ subgroups are infinite and not of type $(2,4,4), (2,4,5)$ or $(2,5,5)$. Moreover, assume that there is at most one exponent $2$. Then the Coxeter realization and the Davis realization of a building of type $W$ admit a systolization. Consequently the group $W$ and any group acting geometrically by type preserving automorphisms on the Davis realization of a building of type $W$ is systolic.
\end{metathm}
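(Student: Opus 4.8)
The plan is to build the systolizations by systolizing vertex links and coning, feeding in Theorem~\ref{thm:dim2} for the rank~$3$ pieces. Let $\Delta$ be a building of type $W$ and write $C$, $X$ for its Coxeter and Davis realizations; both are contractible (the building is non-spherical, as the rank~$3$ special subgroups are infinite), so it suffices to enlarge each, equivariantly for the type preserving automorphisms and quasi-isometrically, to a simply connected complex all of whose vertex links are $6$--large.

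Consider $C$ first. It is $3$--dimensional, and the link of a type~$i$ vertex $v$ is precisely the Coxeter realization of the rank~$3$ residue $R_v$ of type $S\setminus\{i\}$, a building of type $W_{S\setminus\{i\}}$, which by hypothesis is not of type $(2,4,4)$, $(2,4,5)$ or $(2,5,5)$. So Theorem~\ref{thm:dim2} provides a systolization $L_v$ of $\lk_C(v)$, and I would let $\sys C$ be $C$ with each star $\st_C(v)$ replaced by the cone $\{v\}*L_v$. This change does not alter $\pi_1$, since $\st_C(v)$ and $\{v\}*L_v$ are both contractible and attached along $\lk_C(v)$, so $\sys C$ is simply connected. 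As for the links: the link in $\sys C$ of a new vertex $w$, lying inside some $\{v\}*L_v$, is the cone $\{v\}*\lk_{L_v}(w)$, and a cone over a flag $6$--large complex is again flag and $6$--large (a $4$-- or $5$--cycle through the apex has a diagonal to the apex, and one avoiding the apex has a diagonal in $\lk_{L_v}(w)$). For an old vertex $v$ one wants $\lk_{\sys C}(v)=L_v$; since a cell added to a neighbouring $L_{v'}$ passing through $v$ would otherwise enlarge $\lk_{\sys C}(v)$, one needs the systolization of Theorem~\ref{thm:dim2} to be defined locally and naturally in the residue, so that these contributions are already cells of $L_v$ --- a property of the construction used to prove that theorem. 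Granting this, every vertex link of $\sys C$ is $6$--large, so $\sys C$ is systolic.

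For the Davis realization the argument is parallel, one dimension lower: under the hypotheses every spherical subset of $S$ has rank $\le 2$, so $X$ is $2$--dimensional and its vertex links are graphs, assembled from a piece of the nerve of $W$ along a simplex together with incidence graphs of rank~$\le 2$ residues. The link of the barycenter of a chamber already has girth $\ge 6$ (a subdivision of the nerve of $W$); the remaining links fail $6$--largeness only through a complete bipartite subgraph $K_{2,t}$ coming from the thickness $t$, or through a short cycle carried by a flat dihedral factor when an exponent equals $2$. Each defect is governed by a rank~$\le 3$ residue --- of admissible type --- and is removed by importing the cells from the systolization of Theorem~\ref{thm:dim2} (the rank~$\le 2$ repairs are elementary), coherently as before. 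That $C\hookrightarrow\sys C$ and $X\hookrightarrow\sys X$ are quasi-isometries follows from the locality of the construction, the constants being controlled by the Coxeter type and thickness. The final assertion is then formal: the type preserving automorphisms of $\Delta$, in particular the natural copy of $W$ and any $G$ acting geometrically on $X$, act on $\sys X$ (resp.\ $\sys C$) properly and cocompactly, hence are systolic.

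The real obstacle, I expect, is to verify that after importing all the local cells every vertex link of $\sys C$ (resp.\ $\sys X$) is $6$--large \emph{globally} --- that the repairs coming from two residues glued along a common rank~$2$ residue leave no uncontrolled $4$-- or $5$--cycle running between them --- and this is exactly where the hypothesis of at most one exponent~$2$ is used. Two commuting pairs of generators would produce, inside an apartment, a flat $\E^2$ containing a square whose boundary $4$--cycle survives in some link and cannot be filled without destroying the quasi-isometry; this is the same mechanism behind the non-systolicity of the $(2,4,4)$ group in Theorem~\ref{thm:244}. With at most one exponent~$2$ this is excluded, and I anticipate the technical core of the proof to be a finite case analysis over the admissible rank~$4$ Coxeter diagrams, confirming in each case that the pieces produced by Theorem~\ref{thm:dim2} and the elementary rank~$\le 2$ moves fit together $6$--largely.
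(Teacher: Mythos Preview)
Your plan has the right instinct---add edges so that each vertex link becomes a systolized rank~$3$ building---but it underestimates what is needed and misses the paper's key mechanisms.

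First, a small confusion: the rank~$3$ systolization of Construction~\ref{def:sys2} adds no new vertices, only edges. So your paragraph about links of ``new vertices $w$'' is vacuous; every vertex of $\sys C$ is an old vertex, and the entire burden is on their links.

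The genuine gap is compatibility. You write that one ``needs the systolization of Theorem~\ref{thm:dim2} to be defined locally and naturally in the residue, so that these contributions are already cells of $L_v$''---but this is precisely what fails. The paper notes explicitly (just after Construction~\ref{def:sys3}) that the systolization of the $3$--dimensional building induces on each vertex link a systolization \emph{strictly thicker} than the one from Construction~\ref{def:sys2}: for instance in a $(2,3,6)$ link one must also connect pairs of type~$\bf 6$ vertices across a type~$\bf 2$ vertex, not just pairs of type~$\bf 3$. So importing the rank~$3$ systolizations link-by-link does not give a consistent global object; one must instead define the new edges globally. The paper does this via the notions of \emph{friends} and \emph{acquaintances} (Definition~\ref{def:friends}), and the definition depends on a case split (Cases~I and~II) according to how the labels $\geq 6$ are arranged around the tetrahedron.

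Once the edges are defined globally, proving $6$--largeness of the resulting links is not a matter of quoting Theorem~\ref{thm:dim2}. The paper equips $X$ with a CAT(0) metric using the Euclidean $\widetilde A_3$ tetrahedron (not the Coxeter shape), introduces a notion of \emph{strong local convexity} (Definition~\ref{def:stronglyedge}, backed by the appendix Theorem~\ref{thm:app:convex}), and uses it to prove structural lemmas (Sublemmas~\ref{sub:I},~\ref{sub:II}, Lemmas~\ref{lem:cloopsI},~\ref{lem:cloopsII}) classifying how triples and quadruples of friends/acquaintances can sit in $X$. These feed a case-by-case analysis of \emph{edge} links in Section~\ref{sec:edge}, which is where the combinatorial criteria of Lemmas~\ref{le:gamma*} and~\ref{lem:gammatilde} are used. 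None of this apparatus is visible in your plan.

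Finally, the Davis realization is not handled by a parallel link-level argument. The paper takes the face complex $\sys X^f$ of the already-built systolization $\sys X$ (Proposition~\ref{lem:face}) and removes open stars of the original vertices; contractibility of the result uses that the removed links are themselves systolic. Your proposed direct repair of $2$--dimensional links is a different route and would need its own verification.
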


The groups in Theorem~\ref{thm:dim3} have cohomological dimension $2$. However, we also discuss a possible application pointed out by Januszkiewicz that should give rise to new systolic groups of cohomological dimension $3$.

\medskip

\noindent \textbf{Organization.} In Section~\ref{Sec_prelim} we list basic lemmas allowing to identify systolic complexes. In Section~\ref{sec:244} we prove Theorem~\ref{thm:244}, that the Coxeter group of type $(2,4,4)$ is not systolic. Section~\ref{Sec:236} illustrates our systolizing method for the Coxeter complex of the triangle group $(2,3,6)$. We prove Theorem~\ref{thm:dim2}, up to discussing the Davis realization, in Section~\ref{Sec:dim2}.
In Section~\ref{Sec_dim3} we describe the systolization of the Coxeter realization in Theorem~\ref{thm:dim3}, prove that it is simply-connected and has simply-connected vertex links. To prove that it is systolic it remains to verify $6$--largeness of the edge links, which we do in Section~\ref{sec:edge}. We systolize the Davis realization in Section~\ref{sec:Davis}. Finally, in Section~\ref{sec:TJan} we describe Januszkiewicz's construction.

In the appendix (Section~\ref{sec:app}) we prove that a particular piecewise spherical metric on infinite rank $3$ buildings is CAT(1), and we give a criterion for $\pi$--convexity. We make use of this metric in Section~\ref{Sec_dim3}.

\medskip

\noindent \textbf{Discussion of assumptions.} The groups acting geometrically on spherical buildings are finite, hence systolic, since they act geometrically on a point. Assume then that $W$ is infinite. Infinite buildings of rank $2$ are trees, hence systolic, so the smallest interesting rank is $3$.

Let $W$ be a Coxeter group of rank $3$. If one of the exponents of $W$ is infinite, then
the Davis realizations of buildings of type $W$ retract to trees, and that is why we focus on the case where all the exponents are finite. In that case the Davis realization coincides with the barycentric subdivision of the Coxeter realization.

Let now $W$ be a Coxeter group of rank $4$. We exclude special subgroups of type $(2,4,4), (2,4,5)$ and $(2,5,5)$, since a finitely presented subgroup of a systolic group is systolic \cites{Za,HMP}. Consider first the case where there is an infinite exponent.
If the defining graph of $W$ is a cycle of length~$4$, then the triangles of the Davis realization of type $W$ building
are arranged into squares forming a $\mathcal{VH}$--complex. Groups acting geometrically on $\mathcal{VH}$--complexes
were shown to be systolic in \cite{EP}.

If there is an infinite exponent and the defining graph of $W$ is a not cycle of length $4$, then $W=W_1\ast_{W_3} W_2$, where $W_1,W_2$ are triangle groups and $W_3$ is finite. The Davis realization of a building of type $W$ is equivariantly homotopy equivalent to a tree of Davis realizations of buildings of type $W_1,W_2$. Systolizing these gives a tree of systolic complexes, which is systolic. Hence a group acting geometrically on a Davis realization of a building of type $W$ is systolic. That is why in Theorem~\ref{thm:dim3} we focus on the case where all the exponents are finite.

If all of the special rank $3$ subgroups of $W$ are finite, then $W$ acts geometrically on $\R^3$ or $\Hyp^3$, thus it is not systolic \cite{JS2}.
Since the cases where some but not all of the special rank $3$ subgroups are finite are difficult to handle,
we assume that all of them are infinite.

The complication in the problem comes from the exponents $2$, and that is why we study the simplest case, that is the case where there is at most one exponent $2$. If all the exponents are $\geq 3$, then the Coxeter realization is systolic to begin with.
To systolize the Davis realization consider the face complex of the Coxeter realization and remove open stars of original vertices (see Section~\ref{sec:Davis} for details).

\medskip

\noindent \textbf{Acknowledgements.} We thank Stefan Witzel and the referee for many useful suggestions.

\section{Simplicial nonpositive curvature}
\label{Sec_prelim}
In this section we recall basic definitions and notation used to study systolic complexes. We also give criteria for a complex to be systolic.

\begin{defin}
A simplicial complex $X$ is \emph{flag} if every clique (a set of vertices pairwise connected by edges) spans a simplex. A subcomplex $Y$ of $X$ is \emph{full} if any simplex of $X$ spanned by vertices in $Y$ is in $Y$. A \emph{cycle} in $X$ is a subcomplex of $X$ that is a subdivision of the circle.
A flag simplicial complex is \emph{$k$--large}, for $4\leq  k \leq \infty $, if it has no full cycle of length $<k$.
\end{defin}

In other words a flag simplicial complex $X$ is $k$--large if every cycle of length $4\leq l<k$ has a \emph{diagonal}, that is an edge in $X$ between a pair of non-consecutive vertices of the cycle. Consequently, every closed edge-path $\gamma$ in $X$ of length $<k$ bounds a disc diagram with no interior vertices. In particular, if $\gamma$ is locally embedded, then it has three consecutive vertices $u,v,w$ such that $uw$ is an edge.

\begin{defin}
A simplicial complex is \emph{systolic} if it is connected,
simply-connected, and all its vertex links are $6$--large.
A group is \emph{systolic} if it acts geometrically on a systolic complex.
\end{defin}

While in \cite{JS} authors require in the definition of a systolic complex that the links of all the simplices are $6$--large, this is trivially equivalent with our definition. Note that by \cite[Prop 1.4]{JS} a systolic complex is itself $6$--large. Moreover, by \cite[Thm 4.1(1)]{JS} a systolic complex is contractible.

\begin{lemma}
\label{le:amalgam}
Suppose that a simplicial complex $X$ is obtained from two flag simplicial complexes $A$ and $B$ by gluing them along a simplex.
Then $X$ is $k$--large if and only if $A$ and $B$ are $k$--large.
\end{lemma}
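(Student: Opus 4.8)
The plan is to prove both implications. For the easy direction, suppose $X$ is $k$--large. Since $A$ and $B$ are full subcomplexes of $X$ (any simplex of $X$ spanned by vertices of $A$ lies in $A$, because a simplex meeting both $A\setminus B$ and $B\setminus A$ would have to have an edge joining the two sides, which is impossible as all such edges lie in the gluing simplex $\sigma=A\cap B$), any full cycle in $A$ of length $<k$ would also be a full cycle in $X$ of length $<k$; hence $A$, and symmetrically $B$, is $k$--large. One should also note that $X$, $A$, $B$ being flag is part of the hypothesis in one direction and needs to be checked in the other: if $A$ and $B$ are flag, then $X$ is flag, since any clique in $X$ either lies entirely in $A$ or entirely in $B$ — a clique with vertices $a\in A\setminus\sigma$ and $b\in B\setminus\sigma$ would need the edge $ab$, which does not exist.

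For the main direction, assume $A$ and $B$ are $k$--large and let $\gamma$ be a full cycle in $X$ of length $l<k$; I must derive a contradiction. If $\gamma$ lies entirely in $A$ or entirely in $B$ we are done by hypothesis, using that $A,B$ are full in $X$ so that "full in $X$" implies "full in $A$" (resp.\ $B$). Otherwise $\gamma$ meets both $A\setminus\sigma$ and $B\setminus\sigma$. Since every edge of $X$ joining a vertex of $A\setminus\sigma$ to a vertex of $B\setminus\sigma$ would have to lie in $\sigma$, there is no such edge; hence consecutive vertices of $\gamma$ cannot straddle the two sides, so $\gamma$ must pass through $\sigma$. In fact $\gamma\cap\sigma$ must contain at least two vertices (or $\gamma$ would stay on one side), say the cycle enters $A\setminus\sigma$ at $p\in\sigma$ and returns to $\sigma$ at $q\in\sigma$. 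But $p$ and $q$ are both vertices of the simplex $\sigma$, hence $pq$ is an edge of $X$ (or $p=q$); if $p\neq q$ this edge is a diagonal of $\gamma$, contradicting fullness (the arc of $\gamma$ through $A\setminus\sigma$ between $p$ and $q$ has length $\geq 2$, and likewise the complementary arc has length $\geq 2$ since $\gamma$ also visits $B\setminus\sigma$, so $p,q$ are non-consecutive). If one cannot find such a $p\neq q$, then $\gamma$ enters and leaves a single side through the same vertex of $\sigma$, which forces $\gamma$ to have a vertex of valence $\geq 3$ in $\gamma$ or fail to be a cycle — more carefully, a cycle is a subdivided circle, so each of its vertices has exactly two neighbours in $\gamma$; tracing $\gamma$ starting from $\sigma$ and using that crossings between the two sides only happen at vertices of $\sigma$, one sees $\gamma\cap\sigma$ has at least two vertices, giving the desired diagonal.

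The main obstacle I anticipate is purely bookkeeping: making precise the combinatorial claim that a cycle meeting both open sides must contain at least two vertices of the gluing simplex $\sigma$, and that two such vertices are necessarily non-consecutive on the cycle so that the edge between them (guaranteed because $\sigma$ is a simplex) is a genuine diagonal. This is elementary but must be stated cleanly, perhaps by orienting $\gamma$ and considering the cyclic sequence of its vertices, partitioned into maximal arcs lying in $A$ and in $B$; there are at least two such arcs, consecutive arcs share exactly one vertex which lies in $\sigma$, and a vertex shared by two arcs on the same side would violate that cycle vertices have degree $2$. Hence there are $\geq 2$ vertices of $\gamma$ in $\sigma$, any two of which span an edge; choosing two that bound an $A$--arc (or $B$--arc) of length $\geq 2$ on both sides yields the contradicting diagonal, completing the proof.
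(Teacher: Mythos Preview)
Your proof is correct and follows precisely the same idea as the paper's own proof, which is the one-line observation that every cycle in $X$ not contained in $A$ or $B$ must have two non-consecutive vertices in $A\cap B$. You have simply unpacked this sentence in detail (including the flagness check and the bookkeeping about why two such vertices exist and are non-consecutive), so the approaches are identical.
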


\begin{proof}
This follows from the fact that every cycle in $X$ that is not contained in $A$ or $B$ must have two non-consecutive vertices in $A\cap B$
\end{proof}

\begin{cor}\label{cor:join}
The join of a simplex with a discrete set is $\infty$--large.
\end{cor}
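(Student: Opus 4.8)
The plan is to deduce this from Lemma~\ref{le:amalgam} by induction on the number of vertices of the discrete set. Write $S=\{s_1,\dots,s_n\}$ for the discrete set and $\Delta$ for the simplex, so that we must show $\Delta\ast S$ is $\infty$--large. The case $n=0$ is immediate: a simplex is flag, and it contains no full cycle at all, since the full subcomplex of a simplex on any subset of its vertices is again a simplex and hence never a subdivided circle.

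For the inductive step I would set $X_j=\Delta\ast\{s_1,\dots,s_j\}$ and record two observations. First, $\Delta\ast\{s_j\}$ is itself a simplex --- the one spanned by the vertices of $\Delta$ together with $s_j$ --- so it is flag and $\infty$--large. Second, since no two vertices of $S$ are joined by an edge, any simplex of $X_j$ meeting $S$ meets it in exactly one vertex; therefore $X_j=X_{j-1}\cup(\Delta\ast\{s_j\})$, and a simplex lies in both pieces exactly when all of its vertices lie in $\Delta$, so $X_{j-1}\cap(\Delta\ast\{s_j\})=\Delta$. Now $X_{j-1}$ is flag by the inductive hypothesis and $\Delta\ast\{s_j\}$ is flag, so Lemma~\ref{le:amalgam} (with $k=\infty$) applies to $X_j=X_{j-1}\cup_{\Delta}(\Delta\ast\{s_j\})$ and shows that $X_j$ is $\infty$--large, both factors being $\infty$--large. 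Taking $j=n$ completes the proof.

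I do not anticipate any real difficulty. The one point that needs a word of care is that the two pieces of the amalgam intersect precisely in the simplex $\Delta$, and not in anything larger --- this is exactly where the discreteness of $S$ is used. (One could equally argue directly: a clique of $\Delta\ast S$ meets $S$ in at most one vertex, hence spans a simplex, so $\Delta\ast S$ is flag; and a hypothetical full cycle of finite length $\ell\ge 4$ could contain at most two vertices of $\Delta$, necessarily consecutive because all vertices of $\Delta$ are pairwise adjacent, which forces two consecutive vertices of the cycle to lie in $S$ and hence to be non-adjacent --- a contradiction.)
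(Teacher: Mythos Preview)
Your proof is correct and is exactly the argument the paper intends: the corollary is stated without explicit proof immediately after Lemma~\ref{le:amalgam}, and your inductive application of that lemma (gluing on one cone $\Delta*\{s_j\}$ at a time along the simplex $\Delta$) is precisely how one unpacks it. The parenthetical direct argument is also fine but is extra.
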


Here is another criterion for $k$--largeness.

\begin{lemma}\label{le:collaps}
Let $f\colon A\rightarrow B$ be a simplicial map from a flag simplicial complex $A$ onto a flag simplicial complex $B$. Suppose that vertices $a,a'$ of $A$ are adjacent if and only if the vertices $f(a), f(a')$ are adjacent or equal. Then $A$ is $k$--large if and only if $B$ is $k$--large.
\end{lemma}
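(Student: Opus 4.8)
The plan is to prove both directions by a cycle-chasing argument, transporting full cycles back and forth along $f$. Since $k$-largeness of a flag complex is equivalent to having no full cycle of length $<k$, I would fix $4\le l<k$ throughout and show: $A$ has a full $l$-cycle if and only if $B$ does.

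**From $A$ to $B$.** Suppose $\gamma$ is a full cycle in $A$ of length $l$ with consecutive vertices $a_0,a_1,\dots,a_{l-1}$. The hypothesis on adjacency implies that consecutive $f(a_i)$ are either adjacent or equal; I claim they are in fact adjacent and distinct, so that $f(\gamma)$ is again an $l$-cycle. Indeed, if $f(a_i)=f(a_{i+1})$ then by hypothesis every neighbour of $a_i$ in $A$ is a neighbour of $a_{i+1}$ and vice versa; in particular $a_{i-1}$ would be adjacent to $a_{i+1}$, a diagonal, contradicting fullness (here I use $l\ge 4$ so that $a_{i-1}$ and $a_{i+1}$ are non-consecutive). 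So $f$ maps $\gamma$ bijectively on vertices to a cyclic sequence in $B$ with consecutive vertices adjacent, i.e.\ an $l$-cycle $f(\gamma)$. It remains to check $f(\gamma)$ is full: if $f(a_i)f(a_j)$ were an edge of $B$ for non-consecutive $i,j$, then by hypothesis $a_ia_j$ is an edge of $A$ (they are not equal, since their images differ as just shown — wait, we must handle the case $f(a_i)=f(a_j)$ for non-consecutive $i,j$), giving a diagonal of $\gamma$. The case $f(a_i)=f(a_j)$: then again all neighbours coincide, so $a_{i}$ is adjacent to the neighbours of $a_j$, in particular to $a_{j+1}$, which is non-consecutive to $a_i$ (as $i,j$ non-consecutive and $l\ge 4$ allow choosing such a diagonal), again contradicting fullness. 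Hence $f(\gamma)$ is a full $l$-cycle in $B$.

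**From $B$ to $A$.** Conversely, suppose $B$ has a full $l$-cycle $\delta$ with consecutive vertices $b_0,\dots,b_{l-1}$. Since $f$ is onto, choose $a_i$ with $f(a_i)=b_i$. Consecutive $b_i,b_{i+1}$ are adjacent in $B$, hence distinct, hence $a_i\ne a_{i+1}$ and, by hypothesis, $a_i$ adjacent to $a_{i+1}$ in $A$. Thus $a_0,\dots,a_{l-1}$ is a closed edge-path of length $l$ in $A$. I would then argue it is an embedded cycle with no diagonal: if $a_i=a_j$ or $a_ia_j$ is an edge for non-consecutive $i,j$, then $f(a_i)=f(a_j)$ or $f(a_i)f(a_j)$ is an edge, i.e.\ $b_i=b_j$ (impossible, $\delta$ embedded) or $b_ib_j$ is a diagonal of $\delta$ (impossible, $\delta$ full). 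So the $a_i$ are distinct and the path is a full $l$-cycle in $A$.

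**Main obstacle.** The only delicate point, and the one I expect to need the most care, is the bookkeeping around the possibility $f(a_i)=f(a_j)$ for \emph{non-consecutive} indices in the $A$-to-$B$ direction: one must use the "neighbours coincide" property together with $l\ge 4$ to produce an honest diagonal of $\gamma$ and derive a contradiction, rather than assuming $f$ is injective on the cycle's vertices. Everything else is a routine transcription of the adjacency hypothesis. I would also remark that the lemma is stated for $4\le k\le\infty$, and the argument above works uniformly for all such $k$ since it only ever inspects cycles of a fixed finite length $l<k$.
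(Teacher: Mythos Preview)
Your overall approach matches the paper's: show that full cycles of a given length $l$ in $A$ correspond to full cycles of the same length in $B$. The $B$-to-$A$ direction is fine and essentially identical to the paper's. However, in the $A$-to-$B$ direction your handling of the case $f(a_i)=f(a_j)$ for non-consecutive $i,j$ has a genuine gap when $l=4$. You argue that ``neighbours coincide'', so $a_i$ is adjacent to $a_{j+1}$ (or $a_{j-1}$), and claim one of these is non-consecutive to $a_i$. But for $l=4$ the only non-consecutive index is $j=i+2$, and then $j-1=i+1$ and $j+1=i-1$ are \emph{both} consecutive to $i$; so the ``neighbours coincide'' argument produces no diagonal and no contradiction.

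The fix is much simpler than the detour you propose, and it is exactly what the paper does: apply the hypothesis \emph{directly}. For non-consecutive $a_i,a_j$ in the full cycle $\gamma$ you have $a_i\neq a_j$ and $a_i$ not adjacent to $a_j$; the contrapositive of the adjacency hypothesis then gives immediately that $f(a_i)$ and $f(a_j)$ are neither equal nor adjacent. This single observation simultaneously shows that $f$ is injective on the vertices of $\gamma$ (beyond the consecutive case you already handled) and that $f(\gamma)$ has no diagonals. In other words, the ``delicate point'' you flag as the main obstacle dissolves once you use the biconditional in the hypothesis in the reverse direction, rather than routing through the derived ``same neighbours'' property.
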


Note that $f$ is a homotopy equivalence.

\begin{proof}
It suffices to show that the lengths of shortest full cycles in $A$ and $B$ are equal. A cycle $\beta$ in $B$ lifts to $A$. If $\beta$ has no diagonals, then neither does its lift. Conversely, suppose that we have a full cycle $\alpha$ in $A$. Then for non-consecutive vertices $a,a''$ of $\alpha$ the vertices $f(a),f(a'')$ are neither equal nor adjacent. In particular, if $a,a',a''$ are consecutive, then $f(a), f(a'')$ are not adjacent, hence $f(a)\neq f(a')$. Therefore $f(\alpha)$ is a cycle and has no diagonals.
\end{proof}

Here is another criterion.

\begin{defin}\label{def:gamma*}
Let $\Gamma$ be a graph whose maximal cliques (with respect to inclusion) intersect only along vertices. Denote by $V,M$ the sets of vertices and maximal
cliques of $\Gamma$. Consider the following graph $\Gamma^*$ with vertex
set $V\cup M$. We connect $v,v'\in V$ by an edge in $\Gamma^*$ if
they are connected by an edge in $\Gamma$. We connect $m,m'\in M$ if $m\cap m'\neq \emptyset$, and we connect $v\in
V, m\in M$ if $v\in m$. See Figure~\ref{fig_gstar}.
\end{defin}

\begin{figure}[h]
 \begin{center}
\resizebox{!}{0.4\textwidth}{\input{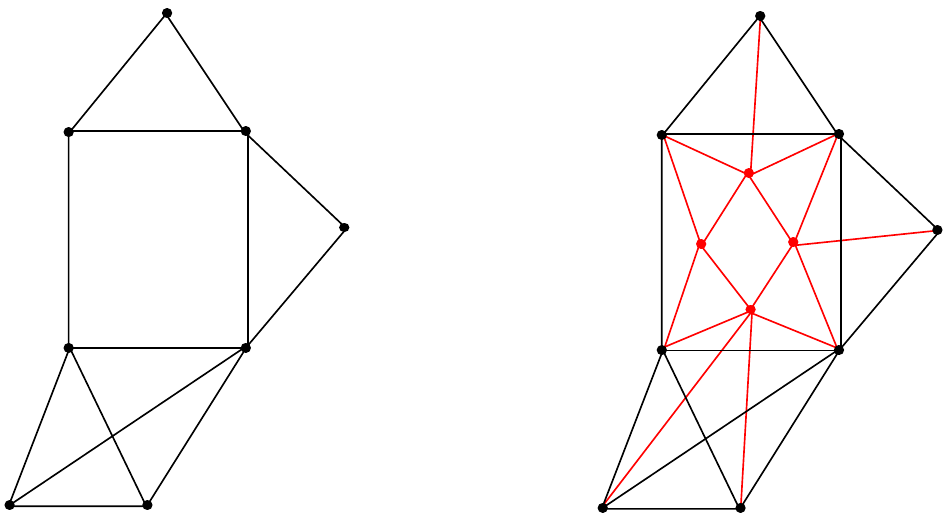_t}}
\caption[stargraph]{An example of a graph $\Gamma$ and associated $\Gamma^*$.}
\label{fig_gstar}
\end{center}
\end{figure}

\begin{lemma}\label{le:gamma*}
Let $\Gamma$ be a graph whose maximal cliques
intersect only along vertices. The flag complex spanned on $\Gamma^*$ is $k$--large if and only if the flag complex spanned on $\Gamma$ is $k$--large.
\end{lemma}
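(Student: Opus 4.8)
The plan is to exhibit a simplicial map between the flag complexes spanned on $\Gamma$ and on $\Gamma^*$ that satisfies the hypothesis of Lemma~\ref{le:collaps}, or failing that, to argue directly via the disc-diagram characterization of $k$--largeness. The cleanest route is the former. Let $X$ be the flag complex on $\Gamma$ and $X^*$ the flag complex on $\Gamma^*$. I would like to define $f\colon X^*\to X$ by sending each vertex $v\in V$ to itself and each maximal-clique vertex $m\in M$ to some chosen vertex $c(m)\in m$. The difficulty is that a single global choice function $c$ need not send adjacent vertices of $\Gamma^*$ to adjacent-or-equal vertices of $\Gamma$: if $m,m'\in M$ are adjacent in $\Gamma^*$ they share a vertex of $\Gamma$, but the chosen representatives $c(m),c(m')$ may be distinct and non-adjacent. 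So a naive collapse fails, and this is exactly the main obstacle.

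To circumvent it, I would instead work one full cycle at a time, which is all that $k$--largeness requires. Suppose $X^*$ has a full cycle $\gamma^*$ of length $\ell$; I want to produce a full cycle in $X$ of length at most $\ell$ (and symmetrically in the other direction). Given $\gamma^*$, define the map on the finitely many vertices of $\gamma^*$ only: each $v\in V$ on $\gamma^*$ maps to $v$, and each $m\in M$ on $\gamma^*$ maps to a vertex of $m$ chosen as follows. Since maximal cliques of $\Gamma$ meet only along vertices, each $m$ appearing in $\gamma^*$ has at most two neighbors on $\gamma^*$ (its predecessor and successor), and these neighbors are either vertices of $m$ or maximal cliques meeting $m$ in a vertex. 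I would choose $c(m)$ to be a vertex of $m$ that lies in $m$ and, where possible, equals or is adjacent in $\Gamma$ to the images already forced for the neighbors; because $m$ is a clique, any two vertices of $m$ are adjacent, so the only genuine constraint is matching a neighbor that is itself a vertex $v\in V\cap m$, in which case I set $c(m)=v$ or a vertex of $m$ adjacent to $v$ — but every vertex of $m$ is adjacent to $v$ since $m$ is a clique containing $v$. Hence along $\gamma^*$ consecutive vertices map to adjacent-or-equal vertices of $\Gamma$, so $f(\gamma^*)$ is a closed edge-path in $X$ of length $\le\ell$.

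It then remains to check two things: that $f(\gamma^*)$ is an honest (locally embedded) cycle, not a degenerate path, and that it is full. Local non-degeneracy follows exactly as in the proof of Lemma~\ref{le:collaps}: for three consecutive vertices $a,a',a''$ of the full cycle $\gamma^*$, the endpoints $a,a''$ are non-consecutive in $\gamma^*$, hence non-adjacent in $\Gamma^*$; one checks from the construction of $\Gamma^*$ that this forces $f(a),f(a'')$ to be non-adjacent and distinct in $\Gamma$ (the cases are $v,v'$; $v,m$; $m,m'$, using in the last case that $m\cap m'=\emptyset$), so in particular $f(a)\neq f(a')$ and the image does not backtrack. Fullness is the same argument applied to arbitrary non-consecutive pairs: if $a,a''$ are non-consecutive on $\gamma^*$ then they are non-adjacent in $\Gamma^*$, and I must check the chosen representatives $f(a),f(a'')$ are non-adjacent in $\Gamma$. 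The only subtle case is $a=m$, $a''=m'$ with $c(m),c(m')$ distinct vertices of $\Gamma$: if they were adjacent in $\Gamma$ they would lie in a common maximal clique $m''$, and I would need to derive a contradiction with $m,m'$ non-adjacent in $\Gamma^*$ (i.e.\ $m\cap m'=\emptyset$) — here one uses that the representatives were pinned to the structure of $\gamma^*$, so any such $m''$ would create a diagonal chord of $\gamma^*$ in $\Gamma^*$, contradicting fullness. Running the symmetric argument (lifting a full cycle of $X$ to $X^*$, which is easy since $V\subset V\cup M$ and edges among $V$ agree) gives equality of shortest full-cycle lengths, hence the claimed equivalence of $k$--largeness. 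I expect the bookkeeping in the $m,m'$ case of the fullness check to be the one place that needs genuine care; everything else parallels Lemma~\ref{le:collaps}.
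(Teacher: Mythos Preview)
Your plan contains a genuine gap, and it is precisely the step you flagged as needing care. The assertion that non-adjacency of $a,a''$ in $\Gamma^*$ forces non-adjacency of $f(a),f(a'')$ in $\Gamma$ is false in the mixed cases. For instance, take non-consecutive $m,m'\in M$ on $\gamma^*$; then $m\cap m'=\emptyset$, so $c(m)\neq c(m')$, but nothing prevents $c(m)$ and $c(m')$ from being adjacent in $\Gamma$: the edge $c(m)c(m')$ would sit in some third maximal clique $m''$, and $m''$ has no reason to lie on $\gamma^*$, so no diagonal of $\gamma^*$ is produced. The same failure occurs already in your local check for three consecutive vertices, so even ``the image does not backtrack and has no local shortcut'' is not established by your argument. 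A single representative $c(m)\in m$, however chosen, does not carry enough information to control these adjacencies.

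The paper's proof avoids this by two changes. First, for each clique $\rho_i\in M$ on $\gamma^*$ it records \emph{two} vertices, namely $v_{2i}=\rho_i\cap\rho_{i-1}$ and $v_{2i+1}=\rho_i\cap\rho_{i+1}$, and strings these into a closed edge-path $\gamma$ in $\Gamma$; a simplicial map between the barycentric subdivisions then shows $|\gamma|\le|\gamma^*|$. Second, and crucially, the paper does \emph{not} attempt to prove that $\gamma$ is full. It only checks the local statement that for any three consecutive vertices $u,v,w$ of $\gamma$ one has $u\neq w$ and $uw\notin\Gamma$; this is where the hypothesis that maximal cliques meet only in vertices is actually used (if $uw$ were an edge, the maximal clique through the triangle $uvw$ would share an edge with $\rho_i$ or $\rho_{i+1}$, forcing equality and yielding a diagonal of $\gamma^*$). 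The contradiction then comes from the disc-diagram fact recorded earlier in Section~\ref{Sec_prelim}: a locally embedded closed edge-path of length $<k$ in a $k$--large complex must have three consecutive vertices with the outer two joined by an edge. Your global fullness check is both stronger than needed and, with a one-vertex-per-clique choice, unachievable; replacing it with the two-vertex construction plus the local shortcut argument is what makes the proof go through.
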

\begin{proof}
The `only if' part is obvious, since the complex spanned on $\Gamma$ is a full subcomplex of the complex spanned on $\Gamma^*$. For the converse, let $\gamma^*$ be a shortest full cycle in $\Gamma^*$ with vertices $\rho_0,\rho_1,\ldots,\rho_n=\rho_0$. Assume
by contradiction $n<k$. We denote the relevant vertices of $\rho_i$ in
the following way. If $\rho_i$ is a vertex, then let
$v_{2i}=v_{2i+1}=\rho_i$. If $\rho_i$ is a clique, then let
$v_{2i}=\rho_i\cap\rho_{i-1}, v_{2i+1}=\rho_i\cap \rho_{i+1}$ (understood
cyclically).
Note that in this case $v_{2i}\neq v_{2i+1}$, since $\gamma^*$ has no
diagonals.

Let $\gamma$ be the closed edge-path formed by $(v_i)_{i=0}^{2n-1}$, after removing
consecutive repeating vertices.
Since $\gamma^*$ has no diagonals, there are combinatorially only three
possibilities for a triple of consecutive vertices $u,v,w$ of $\gamma$, up
to interchanging $u$ with $w$: Either
$u=\rho_{i-1},v=\rho_{i},w=\rho_{i+1}$ for some $i$, or
$v=\rho_i\cap\rho_{i+1}$ with $u\in \rho_i,w\in \rho_{i+1}$, or else
$v=\rho_i\in \rho_{i+1}$ with $u=\rho_{i-1}$. We claim that in all three
cases $u\neq w$ and moreover $u$ and $w$ are not connected by an edge. In
the first case this follows from the fact that $\rho_{i-1}\rho_{i+1}$ is
not a diagonal of $\gamma^*$. In the other two cases the clique
$\rho_{i+1}$ would have at least $vw$ in common with the triangle $uvw$,
so it would have to be equal to the maximal clique containing $uvw$. In
the third case this contradicts the fact that $\rho_{i-1}\rho_{i+1}$ is
not a diagonal of $\gamma^*$. In the second case we get that $\rho_{i}$ is
also the maximal clique containing $uvw$, so it coincides with
$\rho_{i+1}$, contradiction. This proves the claim, so that in particular
$\gamma$ is locally embedded.

Let $g\colon \gamma^*\rightarrow \gamma$ be a map defined in the
following way. If $\rho_i$ is a vertex, then let $g(\rho_i)=\rho_i$.
Otherwise, let $g(\rho_i)$ be the barycenter of the edge $v_{2i}v_{2i+1}$.
This extends to a simplicial (possibly degenerate) map $g$
between the barycentric subdivisions of $\gamma^*$ and $\gamma$. Hence
$|\gamma|\leq |\gamma^*|=n<k$. Since $\Gamma$ is $k$--large, $\gamma$ can be triangulated by consecutively adding diagonals. This
yields three consecutive vertices $u,v,w$ on $\gamma$ such that $uw$ is an
edge, and contradicts the claim.
\end{proof}

Finally, we have the following criterion.

\begin{defin}
\label{def:gammatilde}
Let $\Gamma=(V,E)$ be a graph of girth $\geq 4$. Let $\widetilde{\Gamma}$ be the following graph whose vertices are pairs $(v,\sigma)$, where $v\in V,\  \sigma\in V\cup E$ and $v\subset \sigma$. Vertices $(v,\sigma), (v',\sigma')$ are connected by an edge if $v=v'$ or $v$ and $v'$ are adjacent and $\sigma\in \{v,vv'\}, \sigma'\in \{v',vv'\}$ (see Figure~\ref{fig_gtilde}).
\end{defin}

\begin{figure}[h]
 \begin{center}
\resizebox{!}{0.4\textwidth}{\input{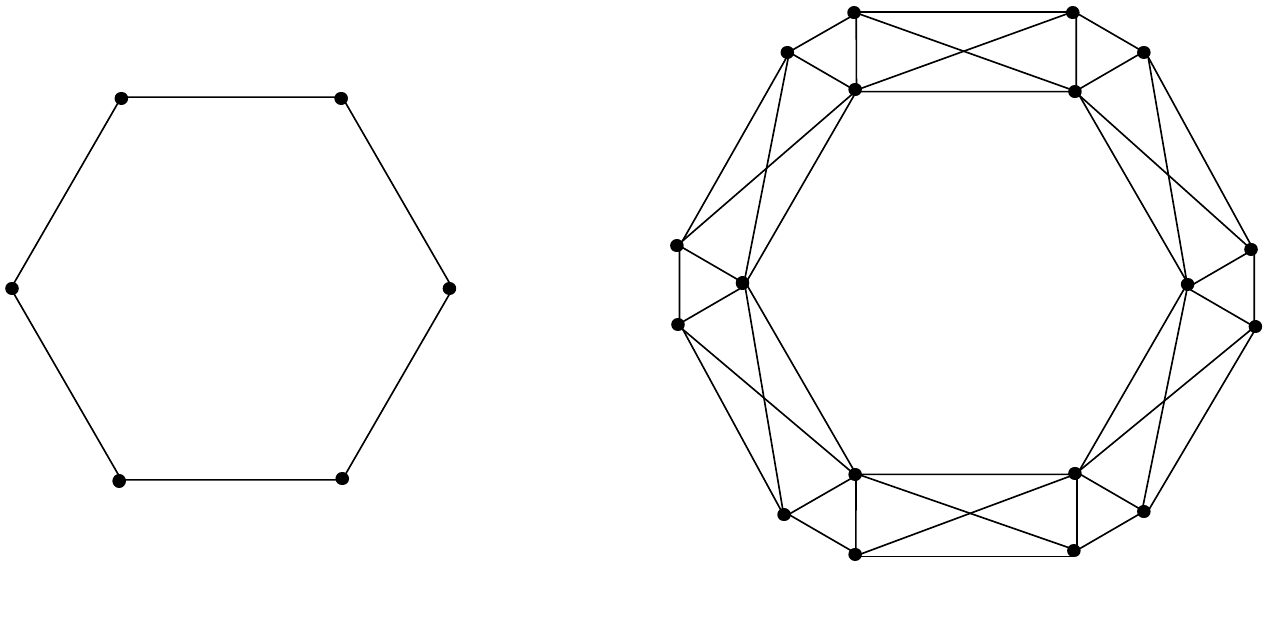_t}}
\caption[stargraph]{An example of a graph $\Gamma$ and associated $\widetilde\Gamma$.}
\label{fig_gtilde}
\end{center}
\end{figure}

\begin{lemma}
\label{lem:gammatilde}
Let $\Gamma$ be a graph of girth $\geq 4$. The flag complex spanned on $\widetilde{\Gamma}$ is $k$--large if and only if $\Gamma$ has girth $\geq k$.
\end{lemma}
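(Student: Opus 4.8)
The plan is to establish the more precise statement that the length of a shortest full cycle in the flag complex $\widetilde{X}$ spanned on $\widetilde{\Gamma}$ equals the girth of $\Gamma$, where both are read as $\infty$ when no such cycle, resp.\ no cycle, exists; the lemma follows immediately, since a flag complex is $k$--large exactly when every full cycle has length $\geq k$. Two elementary features of $\widetilde{\Gamma}$ are used throughout: for fixed $v$ the vertices $(v,\sigma)$ are pairwise adjacent (via the ``$v=v'$'' edges), so two vertices with equal first coordinate are always adjacent in $\widetilde{\Gamma}$; and if $(v,\sigma),(v',\sigma')$ are adjacent with $v\neq v'$, then $v$ is adjacent to $v'$ in $\Gamma$.

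One inequality is easy. A shortest cycle $v_0\cdots v_{g-1}v_0$ of $\Gamma$ is induced and, by hypothesis, has length $g=\mathrm{girth}(\Gamma)\geq 4$. Sending $v_i\mapsto (v_i,v_i)$ produces a closed edge-path of length $g$ in $\widetilde{X}$; since $(v_i,v_i)$ and $(v_j,v_j)$ are adjacent iff $v_i\sim v_j$ in $\Gamma$ and the original cycle is chordless, cliques among these $g$ vertices have at most $2$ elements, so the image is a full cycle of length $g$. This is the only point at which $\mathrm{girth}(\Gamma)\geq 4$ is needed --- for a triangle the image would instead be filled by a $2$--simplex.

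For the reverse inequality, let $\widetilde{C}=(\rho_0,\dots,\rho_n=\rho_0)$, $\rho_i=(v_i,\sigma_i)$, be a full cycle of $\widetilde{X}$; since $\widetilde{X}$ is flag, $n\geq 4$ and $\rho_0,\dots,\rho_{n-1}$ are distinct. The crucial point is that non-consecutive vertices of $\widetilde{C}$ have distinct first coordinates: equal first coordinates would make them adjacent, hence give a diagonal. Consequently, reading $v_0,\dots,v_{n-1}$ cyclically, every maximal constant run has length at most $2$ (three equal consecutive values would make the outer two non-consecutive, hence adjacent). Collapsing each run to its value yields a cyclic sequence $u_1,\dots,u_m$ with $m\geq\lceil n/2\rceil$, whose consecutive terms are distinct (maximality) and adjacent in $\Gamma$ (second observation, applied at a run boundary). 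I then rule out $m\leq 2$: for $m=1$ all $\rho_i$ lie in one clique, impossible for $n\geq 4$; for $m=2$ necessarily $n=4$ with runs $\{\rho_0,\rho_1\},\{\rho_2,\rho_3\}$ and $v_0=v_1=a\neq b=v_2=v_3$, so the edges $\rho_1\rho_2$ and $\rho_3\rho_0$ force $\sigma_0\in\{a,ab\}$ and $\sigma_2\in\{b,ab\}$, making $\rho_0$ adjacent to $\rho_2$ --- a diagonal. Hence $m\geq 3$, and then the $u_l$ are pairwise distinct, for two equal values would lie in non-adjacent runs (adjacent ones differ by maximality) and thus in non-consecutive positions, producing a diagonal once more. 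Therefore $u_1\cdots u_m u_1$ is a cycle of $\Gamma$ of length $m\leq n$, so $\mathrm{girth}(\Gamma)\leq n$.

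The two observations about $\widetilde{\Gamma}$ and the fullness of the lifted cycle are routine; I expect the real work to be the run analysis in the last step, in particular disposing of the degenerate case $m\leq 2$ (which needs a small explicit computation to surface a diagonal) and the bookkeeping required to conclude that the collapsed sequence is a genuine embedded cycle rather than merely a closed walk. One cannot shortcut this via Lemma~\ref{le:collaps} applied to the projection $(v,\sigma)\mapsto v$, since that map fails the required adjacency equivalence.
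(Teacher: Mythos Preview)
Your proof is correct, and shares with the paper's the key idea of projecting via $g\colon(v,\sigma)\mapsto v$ and exploiting that fibers $g^{-1}(v)$ are cliques. The execution, however, differs. The paper does not analyze full cycles directly: it takes an arbitrary cycle $\gamma$ in $\widetilde{\Gamma}$ of length $<k$, observes that the projected closed walk $g(\gamma)$ in $\Gamma$ is homotopically trivial (its length is below the girth), and hence backtracks --- either as $vv'v$ or as $vv'v'v$ for consecutive images. In the first case $g^{-1}(v)$ supplies a diagonal of $\gamma$; in the second the four relevant vertices lie in the $4$--clique $\{(v,v),(v,vv'),(v',vv'),(v',v')\}$ containing $g^{-1}(vv'^{\circ})$, again yielding a diagonal. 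Your run-collapsing argument is essentially the contrapositive: starting from a full cycle you show the projection, after merging repeats, is an embedded cycle in $\Gamma$, and your $m=2$ case is precisely the paper's second backtracking case viewed globally. The paper's route is shorter and avoids the case split on $m$, while yours is more explicitly combinatorial and does not invoke homotopy in graphs.
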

\begin{proof}
The `only if' part is again obvious, since $\Gamma$ embeds as a full subcomplex of the complex spanned on $\widetilde{\Gamma}$ under the map $v\rightarrow (v,v)$. For the converse, let $g\colon \widetilde{\Gamma}\rightarrow \Gamma$ be the simplicial map mapping each $(v,\sigma)$ to $v$.
Note that for each vertex $v$ of $\Gamma$, its preimage $g^{-1}(v)$ is a clique. The preimage $g^{-1}(e^\circ)$ of each open edge $e^\circ$ of $\Gamma$ is also contained in a clique (on $4$ vertices). If $\gamma$ is a cycle in $\widetilde{\Gamma}$ of length $<k$, then $g(\gamma)$ is homotopically trivial.
Hence $g(\gamma)$ backtracks in the sense that there are three consecutive vertices $vv'v''$ of $g(\gamma)$ with $v=v''$ or four consecutive vertices  $vv'v''v'''$ with $v=v''', v'=v''$. Considering $g^{-1}(v)$ in the first case and $g^{-1}(vv'^\circ)$ in the other produces a diagonal of $\gamma$.
\end{proof}

\section{The Coxeter group of type $(2,4,4)$ is not systolic}
\label{sec:244}

In this section we prove Theorem~\ref{thm:244} saying that the Coxeter group of type $(2,4,4)$ is not systolic.

\begin{proof}[Proof of Theorem~\ref{thm:244}]
Let $W$ be the $(2,4,4)$ triangle group with Coxeter generating set $s,t,r$, where $sr=rs$.
Let $g=rtst, \ h=tsrts$. Then $t$ conjugates $g^2$ and $h^2$. The subgroup $W'=\langle g,h\rangle$ is the Klein bottle group with relation $gh=hg^{-1}$. In particular, $W'$ is a torsion-free group that is virtually $\Z^2$.

We now follow the proof of \cite[Thm 4.1]{EP}. A \emph{systolic flat} $\E^2_{\triangle}$ is the systolic complex that is the equilateral triangulation of the Euclidean plane. Suppose that $W$ acts geometrically on a systolic complex $X$. By the systolic flat torus theorem \cite[Cor 6.2(1) and Thm 5.4]{Els}, the torsion-free subgroup $W'$ acts properly on a systolic flat $\E^2_{\triangle}\subset X$. If the Klein bottle group acts properly by isometries on the Euclidean plane, then $h$ acts as a glide reflection and $g$ acts as a translation in the direction perpendicular to the glide reflection axis. Since $\E^2_{\triangle}$ is equipped additionally with a combinatorial structure, there are only two possibilities for the axes of $g,h$. Exactly one of them is \emph{quasi-convex} in the sense that a $1$--skeleton geodesic starting and terminating at the axis is contained in its uniform neighborhood \cite[Prop 3.11]{E2}. By \cite[Prop 3.12]{E2}, this contradicts the fact that $g^2,h^2$ are conjugate. See the
proof of \cite[Thm 4.1]{EP} for details.
\end{proof}

\section{Coxeter complexes of type $(2,3,6)$}
\label{Sec:236}

To illustrate the method used in the proof of Theorem~\ref{thm:dim2}, we first show how to systolize the Coxeter complex $\Sigma$ of type $(2,3,6)$.

We say that a vertex is of \emph{type} $\bf{k}$ if its stabilizer is of order $2k$. The links of vertices of type $\bf{2}$ are squares preventing the complex from being systolic. In order to systolize $\Sigma$ we will add diagonals to all these squares, and verify that no new short full cycles are created.

\begin{figure}[htbp]
	\begin{center}
	\resizebox{!}{0.7\textwidth}{\includegraphics{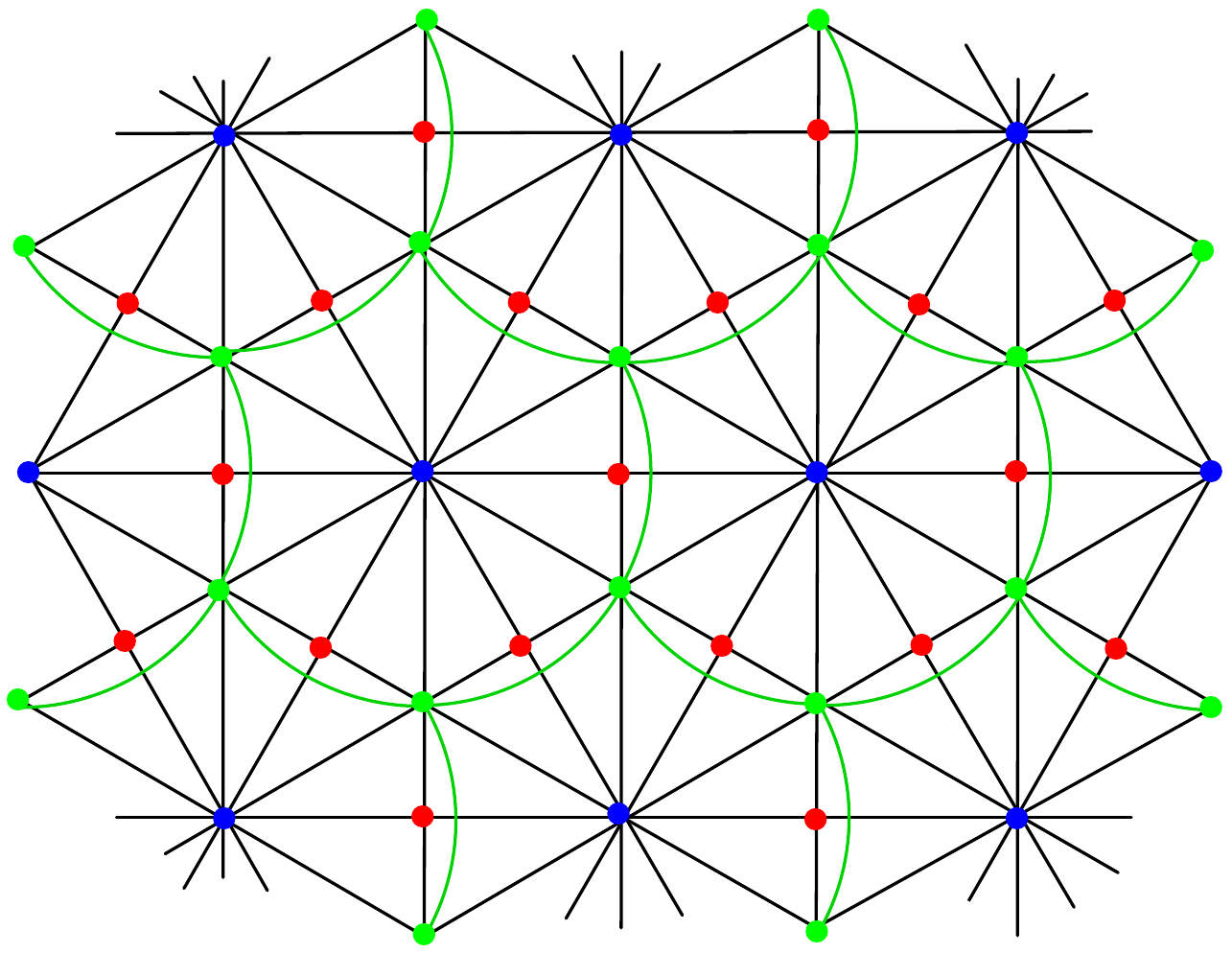}}
	\end{center}
	\caption[Systolized $(2,3,6)$]{Systolization of the Coxeter complex of type $(2,3,6)$.}
	\label{fig_236}
\end{figure}

The \emph{systolization} $\sys\Sigma$ of $\Sigma$ is the flag simplicial complex spanned on the following $1$--skeleton. The vertex set of $\sys{\Sigma}$ is the same as the vertex set of $\Sigma$. A pair of vertices is connected by an edge in $\sys{\Sigma}$ if it is either connected by an edge in $\Sigma$ or it is a pair of vertices of type $\bf 3$ that are adjacent to the same vertex of type $\bf 2$.

Figure~\ref{fig_236} shows the $1$--skeleton of $\sys{\Sigma}$. Vertices of type $\bf 2$ are pictured in red, vertices of type $\bf 3$ in green, vertices of type $\bf 6$ in blue. The new edges (green) are diagonals of the squares that are the links of the red vertices.

We now prove that $\sys\Sigma$ is indeed systolic. First observe that $\sys\Sigma$ is simply-connected, since each loop in the $1$--skeleton of $\sys\Sigma$ can be homotoped to a loop in the $1$--skeleton of $\Sigma$. It remains to show that the vertex links in $\sys{\Sigma}$ are $6$--large. The link of a vertex of type~$\bf 2$ in $\sys\Sigma$ is a pair of triangles glued along an edge, which is obviously $\infty$--large (since it is a join of an edge and a pair of vertices, it is a special case of Corollary~\ref{cor:join}).

The link $\Sigma_6$ of a (blue) vertex of type $\bf 6$ in $\Sigma$ is a cycle of length~$12$. Each of its edges joins a (green) vertex of type $\bf 3$ and a (red) vertex of type $\bf 2$. In the link $\sys\Sigma_6$ of a type $\bf 6$ vertex in $\sys\Sigma$, additional edges appear between pairs of green vertices originally at distance two in $\Sigma_6$. See the left side of Figure~\ref{fig_lk36}. The new edges create a cycle of length $6$ in $\sys\Sigma_6$. Observe that $\sys\Sigma_6$ is obtained from that cycle by gluing six triangles along edges. Hence to show that $\sys\Sigma_6$ is $6$--large, it suffices to apply six times Lemma~\ref{le:amalgam}.

\begin{figure}[h]
\begin{center}
 \begin{minipage}[b]{0.49\textwidth}
    	\resizebox{!}{0.18\textheight}{\includegraphics{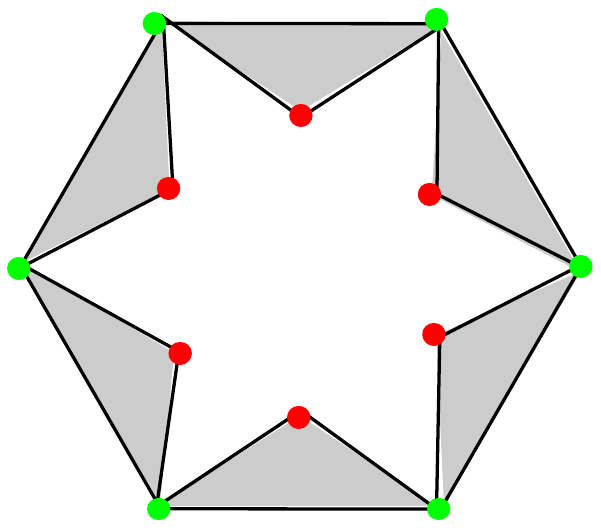}}
  \end{minipage}
  \begin{minipage}[b]{0.49\textwidth}
	\resizebox{!}{0.18\textheight}{\includegraphics{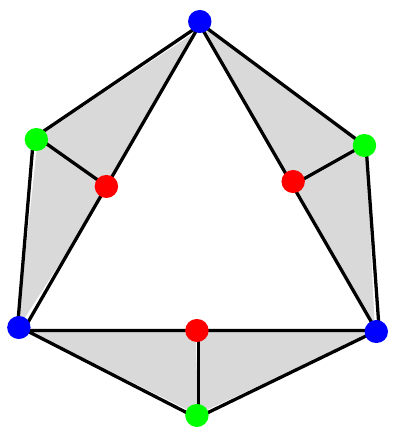}}
  \end{minipage}
  \caption[Links]{On the left the link of a vertex of type $\bf 6$ in the systolization of the Coxeter complex of type $(2,3,6)$. On the right the link of a type $\bf 3$ vertex.}
  \label{fig_lk36}
\end{center}
\end{figure}

The link $\Sigma_3$ of a (green) vertex of type~$\bf 3$ in $\Sigma$ is a blue-red cycle of length~$6$. This is the only vertex type whose link $\sys\Sigma_3$ in $\sys\Sigma$ contains new vertices. There is in $\sys\Sigma_3$ one additional (green) vertex of type~$\bf 3$ for each type~$\bf 2$ vertex, coning off the star of the latter, which is a blue-red-blue edge-path of length~$2$. See the right side of Figure~\ref{fig_lk36}.
There is a map $f\colon \sys\Sigma_3\rightarrow \Sigma_3$ satisfying the hypothesis of Lemma~\ref{le:collaps}. Since $\Sigma_3$ is $6$--large, $\sys\Sigma_3$ is $6$--large as well.

This concludes the proof that $\sys\Sigma$ is systolic and completes the discussion of our example. We could have also made $\Sigma$ systolic by removing vertices of type $\bf 2$ and edges of type $\bf {2-6}$, i.e.\ by merging pairs of chambers along edges of type $\bf {2-6}$. This approach might seem easier at first but does not generalize to buildings.

\section{Rank three}\label{Sec:dim2}

In this section we construct a systolization of the building in Theorem~\ref{thm:dim2}:

\begin{thm}\label{thm:dim2partCoxeter}
Let $W$ be a triangle Coxeter group with finite exponents distinct from $(2,4,4), (2,4,5)$ and $(2,5,5)$. Then the Coxeter realization of a building of type $W$ admits a systolization.
\end{thm}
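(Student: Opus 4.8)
The plan is to generalize the systolization of the $(2,3,6)$ Coxeter complex from Section~\ref{Sec:236} to an arbitrary building of triangle type $W$, handling the buildings of type $(3,3,3)$, $(2,3,6)$ (here the apartments are Euclidean) and all hyperbolic triangle types simultaneously via a uniform combinatorial construction. Write $\{p,q,r\}$ for the exponents of $W$, so the chambers of the Coxeter realization $X$ are triangles whose vertices have three types; call these the $\mathbf{p}$--, $\mathbf{q}$-- and $\mathbf{r}$--vertices, meaning the residue at such a vertex is a generalized $p$--gon, etc. The complex $X$ fails to be systolic precisely at vertices of type $\mathbf{2}$, whose links are generalized $2$--gons, i.e.\ complete bipartite graphs $K_{m,n}$ (thickness $m,n$), hence full of $4$--cycles. (When all exponents are $\geq 3$ there is nothing to do, which is why the theorem is only interesting when exactly one exponent equals $2$, say $p=2$; the excluded types $(2,4,4),(2,4,5),(2,5,5)$ are exactly the ones where the construction below creates short cycles.)

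First I would define $\sys X$ on the same vertex set as $X$, adding an edge between two vertices whenever they are both non-$\mathbf{2}$ vertices adjacent in $X$ to a common $\mathbf{2}$--vertex, and then taking the flag complex on this enlarged $1$--skeleton. Simple-connectedness of $\sys X$ is immediate because every loop in the $1$--skeleton of $\sys X$ homotopes into the $1$--skeleton of $X$, which is simply-connected. The substance of the proof is verifying $6$--largeness of all vertex links in $\sys X$. There are three cases, one per vertex type, and each mirrors the $(2,3,6)$ computation but now with buildings rather than a single apartment:
\begin{itemize}
\item[(i)] For a $\mathbf{2}$--vertex $v$: its link $\lk(v,X)=K_{m,n}$, and in $\sys X$ we have added all edges inside each of the two sides. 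So $\lk(v,\sys X)$ is the join of two cliques, which is a simplex, hence $\infty$--large by Corollary~\ref{cor:join}.
\item[(ii)] For a $\mathbf{q}$-- or $\mathbf{r}$--vertex $v$ \emph{not} adjacent to any $\mathbf{2}$--vertex (this happens when the other two exponents are both $\geq 3$): the link is unchanged and is the incidence graph of a generalized polygon of girth $2q\geq 6$ (resp.\ $2r$), hence $6$--large. When it \emph{is} adjacent to $\mathbf{2}$--vertices (the exponent between $\mathbf{p}$ and this type is $2$), the link acquires new edges but no new vertices; one shows it is obtained from a $6$--large graph (the incidence graph of the generalized polygon, with girth $\geq 6$ since the relevant exponent is $\geq 3$ — this is exactly where $(2,4,4),(2,4,5),(2,5,5)$ are excluded, as there an exponent $2$ forces the other relevant one to be large enough) by repeatedly gluing simplices along simplices, so Lemma~\ref{le:amalgam} applies; alternatively, the new edges are precisely those prescribed by the operation $\Gamma\mapsto\Gamma^*$ of Definition~\ref{def:gamma*}, and Lemma~\ref{le:gamma*} reduces $6$--largeness of $\lk(v,\sys X)$ to $6$--largeness of $\lk(v,X)$.
\item[(iii)] For the $\mathbf{p}$--vertex type with $p=2$: this is the only link gaining new vertices, one for each $\mathbf{2}$--vertex in $\st(v)$, coning off the star of that $\mathbf{2}$--vertex inside $\lk(v,X)$. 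One constructs a surjective simplicial map $f\colon\lk(v,\sys X)\to\lk(v,X)$ collapsing each new vertex onto an adjacent old one, checks the adjacency criterion of Lemma~\ref{le:collaps}, and concludes $6$--largeness from that of $\lk(v,X)$, which is the incidence graph of a generalized $p'$--gon with $p'\geq 3$.
\end{itemize}

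The main obstacle I anticipate is case (ii): in the single-apartment $(2,3,6)$ example the link of a $\mathbf{6}$--vertex is a $12$--cycle and the added edges chop it into a clean hexagon with six triangles attached, but in a thick building the link of the corresponding vertex is the incidence graph of a generalized polygon (with many vertices on each ``side''), so one must check that the maximal cliques of the relevant subgraph meet only in vertices — so that Lemma~\ref{le:gamma*} is even applicable — and that the graph to which one applies $\Gamma\mapsto\Gamma^*$ is exactly the incidence graph of the corresponding generalized polygon, which has girth $\geq 6$. This requires a careful local analysis of how chambers fit around an edge of type $\mathbf{2}$--$\mathbf{q}$ in a building, using that the residue at such an edge is a generalized digon (complete bipartite graph), so that distinct maximal cliques through a vertex of the link correspond to distinct $\mathbf{2}$--vertices and hence intersect only in that vertex. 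Once the bookkeeping of vertex and edge types in the building is set up, each of (i)–(iii) becomes a direct application of the lemmas of Section~\ref{Sec_prelim}, exactly as in the worked example, with the thickness of the building entering only through the cardinalities of the cliques and not affecting largeness.
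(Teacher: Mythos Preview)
Your outline has the right shape, but it is internally inconsistent and misses the one genuinely nontrivial ingredient. You define $\sys X$ \emph{symmetrically}, adding edges between all pairs of non-$\mathbf 2$ vertices with a common $\mathbf 2$-neighbour, yet your case analysis is written for the paper's \emph{asymmetric} Construction~\ref{def:sys2}, which adds edges only between vertices of the one type $\mathbf k$ with the smaller non-$2$ exponent. In your symmetric version the links of \emph{both} non-$\mathbf 2$ types acquire new vertices, so (ii) is false as stated; and in (iii) you label the link gaining new vertices as the $\mathbf p=\mathbf 2$ vertex, which has no $\mathbf 2$-vertices in its star other than itself---the content of (iii) actually describes the type-$\mathbf k$ link. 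Also, Lemma~\ref{le:gamma*} does not apply in (ii): $\lk(v,\sys X)$ is not $\Gamma^*$ for $\Gamma=\lk(v,X)$ (that would add a new vertex for every edge), nor for $\Gamma=V$ (that would add spurious $\mathbf 2$--$\mathbf 2$ edges). The paper instead peels off the type-$\mathbf 2$ cones with Lemma~\ref{le:amalgam} and argues directly that a full $4$- or $5$-cycle in the remaining graph $V$ unfolds to a locally embedded $8$- or $10$-cycle in the generalized $m$-gon $X_m$; this is where $m\ge 6$---hence the exclusion of $(2,4,4),(2,4,5),(2,5,5)$---actually enters, and is why the construction must add edges on the $\mathbf k$-side.

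The substantive gap is in (iii). For the collapse $f\colon\lk_{\sys X}(v)\to\lk_X(v)$ to satisfy the hypothesis of Lemma~\ref{le:collaps} you need two facts that are \emph{not} consequences of residue structure: first, that two type-$\mathbf k$ friends share a \emph{unique} $\mathbf 2$-neighbour and that any common $\mathbf m$-neighbour is adjacent to it (Corollary~\ref{cor:convexstar}); second, that if $v,v',v''$ are pairwise type-$\mathbf k$ friends then they share a common $\mathbf 2$-neighbour, equivalently there is no $6$-cycle in $X$ of alternating types $\mathbf 2,\mathbf k$ (Lemma~\ref{le:2kloops}). Both are proved by equipping $X$ with its CAT(0) metric and showing that vertex stars, and a certain enlarged neighbourhood of a type-$\mathbf k$ vertex, are convex (Lemma~\ref{le:convexStar}). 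Without Lemma~\ref{le:2kloops} you cannot verify that two new vertices in $\lk_{\sys X}(v)$ are adjacent iff their $f$-images coincide, and the whole link computation collapses. Your ``careful local analysis using that the residue at such an edge is a generalized digon'' does not yield these global convexity facts.
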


We are postponing the discussion of a systolization of the Davis realization till Section~\ref{sec:Davis}.

If $W$ is finite, then a building $X$ of type $W$ is finite as well. Thus the simplex~$\sys X$ obtained by spanning simplices on all the vertex sets of $X$ is a systolization of~$X$. As Coxeter realizations of buildings of triangle type $(l,k,m)$ with $l,k,m\geq 3$ are themselves systolic, we only need to study triangle groups of type $(2,k,m)$ with $k\geq 3$ and $m\geq 6$.

Recall that a vertex is of \emph{type} $\bf{k}$ if its stabilizer is of order $2k$. Even if  $m = k$ we will distinguish these two types and will refer to corresponding vertices as of type $\bf m$ or $\bf k$, respectively.

\begin{constr}\label{def:sys2}
Let $X$ be the Coxeter realization of a building of triangle type $(2,k,m)$ with $k\geq 3$ and $m\geq 6$.
The \emph{systolization} $\sys X$ of $X$ is the flag simplicial complex spanned on the following $1$--skeleton. The vertex set of $\sys{X}$ is the same as the vertex set of $X$. A pair of vertices is connected by an edge in $\sys{X}$ if it is either connected by an edge in $X$ or it is a pair of vertices of type $\bf k$ that are adjacent to the same vertex of type $\bf 2$.
\end{constr}

Note that the inclusion $X\subset \sys{X}$ is obviously a quasi-isometry. Moreover, the action of the group of type preserving automorphisms of $X$ extends to~$\sys{X}$. Before we prove that $\sys{X}$ is systolic, we need a handful of lemmas. We consider the CAT(0) metric on $X$ in which the apartments are isometric to $\E^2$ or $\Hyp^2$. Unless mentioned otherwise, all stars are closed.

\begin{lemma}\label{le:convexStar}
Stars of vertices in $X$ are convex.
\end{lemma}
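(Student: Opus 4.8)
The plan is to show that the closed star $\st(v,X)$ of a vertex $v$ is convex in the CAT(0) metric on $X$, using the standard fact (from Bridson--Haefliger or the theory of CAT(0) spaces) that a connected subcomplex of a CAT(0) space is convex if and only if it is locally convex, i.e.\ if and only if its intersection with the link of every point is $\pi$--convex. So the real content is a purely local computation in links of points of $X$.

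First I would reduce to understanding the link of each point $p$ of $X$ and the trace of $\st(v,X)$ on that link. If $p$ is not in $\st(v,X)$ there is nothing to check; if $p$ lies in the open star of $v$, then a neighborhood of $p$ in $\st(v,X)$ is all of a neighborhood of $p$ in $X$, hence trivially convex. So the only interesting case is when $p$ lies on the boundary of $\st(v,X)$, that is, $p$ is in the closed star but the carrier of $p$ is a simplex $\tau$ not containing $v$; then $\tau$ together with $v$ must still span a simplex. Writing $\sigma = v\ast\tau$, the link $\lk(p,X)$ decomposes according to which chambers contain $\tau$, and the trace of $\st(v,X)$ on $\lk(p,X)$ is the union, over chambers $C\supset\tau$ with $v\in C$, of the corresponding sectors. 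Since $X$ is a building of rank $3$, these links are either spherical (when $\tau$ is an edge, so $\lk(p,X)$ is a generalized polygon, which is CAT(1) with systole $\geq 2\pi$) or, when $p$ is a vertex, they are generalized polygons as well. In either case the trace of the star is a union of spherical simplices (sub-arcs) meeting along faces, and I must check this union is $\pi$--convex.

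The key step is the following: when $\tau$ is an edge of $X$ and $p\in\tau^\circ$, the link $\lk(p,X)$ is a bipartite generalized $m$--gon (for the appropriate exponent $m\geq 2$) realized as a metric graph where each edge has length $\pi/m$, hence with girth $2\pi$; the vertex $v$ corresponds to a vertex $\bar v$ of this graph, and the trace of $\st(v,X)$ is the closed star $\st(\bar v,\lk(p,X))$, a union of edges of combinatorial length $1$, which has diameter $2\pi/m\leq\pi$ and is $\pi$--convex because any geodesic of length $\leq\pi$ between two of its points stays within it (a geodesic of length $<2\pi$ in the generalized polygon is unique and embedded). When $p$ is a vertex of $X$, $\lk(p,X)$ is a rank $2$ building, again a generalized polygon with edges of length $\pi/n$ for a suitable $n$; the trace of $\st(v,X)$ on it is the union of sectors in those chambers of $\lk(p,X)$ that correspond to chambers of $X$ containing both $p$ and $v$, and one checks this trace is again the closed star of a vertex (or a single simplex, depending on the position of $v$), hence $\pi$--convex by the same girth argument.

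The main obstacle I anticipate is the careful bookkeeping of the trace of $\st(v,X)$ in the links of points that lie in faces of codimension $1$ and $2$ simultaneously — that is, making sure the local description of $\st(v,X)\cap\lk(p,X)$ as ``a closed star of a vertex'' (or a simplex, or a join with a discrete set) is correct in every combinatorial configuration of how $v$ sits relative to the carrier of $p$, and confirming that in each configuration the relevant subcomplex of the spherical link has diameter $\leq\pi$ and no ``short'' boundary cycles that would violate $\pi$--convexity. Once the local picture is pinned down, $\pi$--convexity of each piece follows from the CAT(1)-ness of rank $2$ building links together with the fact that their systoles are $2\pi$, so that balls of radius $\pi$ are convex; and then local convexity of $\st(v,X)$ at every point, combined with connectedness and the CAT(0) (hence uniquely geodesic) structure of $X$, yields global convexity.
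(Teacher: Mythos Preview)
Your strategy---verify local convexity of $S=\st_X(v)$ at every point and then invoke \cite[Thm~II.4.14]{BH}---is exactly the paper's. Two remarks.

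First, a factual slip: when $p$ lies in the interior of an edge $\tau$, the link $\lk(p,X)$ is \emph{not} a generalized polygon. It is the spherical suspension of the discrete set $\lk(\tau,X)$ (a ``book'' of semicircles through two antipodal points). This case is anyway trivial: if $v\in\tau$ then $p$ is in the open star of $v$; if $v\notin\tau$ then the unique triangle of $S$ containing $\tau$ is $v*\tau$, so the trace of $S$ on $\lk(p,X)$ is a single semicircle of length~$\pi$, which is $\pi$--convex.

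Second, the paper is more economical than your outline: it checks local convexity only at vertices $v'\neq v$ of $S$ (for subcomplexes of polyhedral complexes this suffices; this is what the reference to \cite{BW} is for). At such a $v'$, the paper simply observes that all triangles of $S$ containing $v'$ share the edge $vv'$, so $\lk_S(v')$ is the star of the vertex $\bar v$ in the link graph $\lk_X(v')$; since every angle of the model triangle is $\leq\pi/2$, this star has diameter $\leq\pi$, and since $\lk_X(v')$ has girth $2\pi$, it is $\pi$--convex. Your identification of the trace as a closed star in the vertex case is the same computation, just phrased more elaborately.
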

\begin{proof}
Let $v'\neq v$ be a vertex in $S=\st_X(v)$. Then all triangles of $S$ containing $v'$ share the edge $vv'$.
Since the triangles have all angles $\leq \frac{\pi}{2}$, this implies that $\lk_S(v')$ has diameter $\leq \pi$. Thus $S$ is locally convex at each $v'$. Consequently, $S$ is convex by \cite[Thm II.4.14]{BH} (see also \cite{BW}).
\end{proof}

\begin{cor}\label{cor:convexstar}
Let $v,v'$ be two vertices of type $\bf k$ adjacent to a vertex $w$ of type $\bf 2$. Then $w$ is a unique vertex of type $\bf 2$ adjacent to both $v$ and $v'$. Moreover, if $v,v'$ are both adjacent to a vertex $u$ of type $\bf m$, then $w$ and $u$ are adjacent.
\end{cor}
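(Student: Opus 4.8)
The plan is to exploit the CAT(0) metric on $X$ together with the convexity of stars (Lemma~\ref{le:convexStar}). Recall that $X$ is assembled from isometric copies of a single triangle $\Delta\subset\E^2$ or $\Hyp^2$ with angles $\tfrac{\pi}{2},\tfrac{\pi}{k},\tfrac{\pi}{m}$ at its vertices of type $\mathbf 2,\mathbf k,\mathbf m$; in particular all edges joining a type $\mathbf k$ vertex to a type $\mathbf 2$ vertex have one common length $\ell$, the length of the appropriate side of $\Delta$.

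First I would show that the geodesic $[v,v']$ in $X$ runs through $w$ with $w$ as its midpoint. Pick a chamber $C$ containing the edge $vw$ and a chamber $C'$ containing $v'w$; since $X$ is a building, $C$ and $C'$ lie in a common apartment $A$, a convex subset of $X$ isometric to $\E^2$ or $\Hyp^2$. In $A$ the link $\lk_A(w)$ of the type $\mathbf 2$ vertex $w$ is a $4$--cycle with vertices of type $\mathbf k$ and $\mathbf m$ alternating. Both $v$ and $v'$ lie in $\lk_A(w)$ and have type $\mathbf k$, so, as $v\neq v'$, they are the two opposite type $\mathbf k$ vertices of this $4$--cycle. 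Hence the edges $wv$ and $wv'$ emanate from $w$ in opposite directions inside $A$, so $[v,w]\cup[w,v']$ is a geodesic segment of $A$ of length $2\ell$. By convexity of $A$ in $X$ and uniqueness of geodesics in the CAT(0) space $X$, this segment is exactly $[v,v']$, with $w$ its midpoint.

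The first assertion is now immediate: if $w'$ is another type $\mathbf 2$ vertex adjacent to both $v$ and $v'$, the same argument makes $w'$ the midpoint of $[v,v']$, whence $w'=w$. For the ``moreover'' part, suppose a type $\mathbf m$ vertex $u$ is adjacent to both $v$ and $v'$. By Lemma~\ref{le:convexStar} the star $\st_X(u)$ is convex, and it contains $v$ and $v'$, hence it contains the geodesic $[v,v']$ and in particular its midpoint $w$. Since $w$ is a vertex of $\st_X(u)$ distinct from $u$, the vertices $w$ and $u$ span an edge of $X$.

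I expect the only delicate point to be the claim that $[v,w]\cup[w,v']$ is genuinely the $X$--geodesic $[v,v']$: this rests on apartments being convex in the CAT(0) realization (equivalently, on the computation that the Alexandrov angle $\angle_w(v,v')$ equals $\pi$, using that $wv$ and $wv'$ are opposite in the flat apartment $A$) together with uniqueness of geodesics in $X$. The remaining steps are direct applications of Lemma~\ref{le:convexStar}.
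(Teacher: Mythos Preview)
Your proof is correct and follows essentially the same strategy as the paper: establish that the broken path $vw\cup wv'$ is the geodesic $[v,v']$, deduce uniqueness of $w$ from uniqueness of geodesics, and then apply Lemma~\ref{le:convexStar} to the star of $u$ for the ``moreover'' clause.

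The one minor difference is in how you justify that $vw\cup wv'$ is a geodesic. You pass to a common apartment and observe that $v,v'$ are opposite in the $4$--cycle $\lk_A(w)$. The paper simply asserts the concatenation is a geodesic; implicitly this is because in the full link $\lk_X(w)$ (a complete bipartite graph with edge lengths $\tfrac{\pi}{2}$) any two type $\mathbf k$ vertices are at distance $\pi$, so the Alexandrov angle at $w$ is $\pi$ without needing to choose an apartment. Your route via apartments is perfectly valid and perhaps more geometric, but the direct link computation is a shade quicker and avoids invoking convexity of apartments.
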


\begin{proof}
The concatenation $vv'$ of the edges $vw$ and $wv'$ is a geodesic, hence it determines $w$ uniquely.
By Lemma~\ref{le:convexStar} the geodesic $vv'$ is contained in the star of $u$. Since $w\in vv'$, we have that $w$ lies in the star of $u$.
\end{proof}

The final preparatory lemma involves triples of vertices of type $\bf k$. Below a \emph{fan} of triangles
at a vertex $v\in X$ is a subcomplex of $\st_X(v)$ that is a join of $v$ with a path graph in $\lk_X(v)$.

\begin{lemma}\label{le:2kloops}
In $X$ there is no cycle of length $6$ whose vertices have alternating types $\bf 2$ and $\bf k$.
\end{lemma}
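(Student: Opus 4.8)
The plan is to argue geometrically inside the CAT(0) apartment(s) containing such a cycle, exploiting the angle constraints imposed by the triangle type $(2,k,m)$. Suppose for contradiction that $v_1, v_2, v_3$ are vertices of type $\bf k$ and $w_1, w_2, w_3$ are vertices of type $\bf 2$ forming a $6$--cycle with $v_i$ adjacent to $w_{i-1}$ and $w_i$ (indices mod $3$). Each concatenation $v_i w_i v_{i+1}$ is a geodesic in $X$ (the edges $v_iw_i$ and $w_iv_{i+1}$ meet at angle $\pi$ at the type $\bf 2$ vertex $w_i$, since the link of a type $\bf 2$ vertex is a square and $v_i, v_{i+1}$ are opposite corners — this is the content of Corollary \ref{cor:convexstar} and the CAT(0) metric). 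Hence the cycle is a geodesic triangle with vertices $v_1, v_2, v_3$, and its three sides pass through $w_1, w_2, w_3$ respectively.

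First I would pin down where this triangle lives. Since each side is a geodesic segment of length exactly two edges and the triangle is ``thin'', it should lie in a single apartment $A \cong \E^2$ or $\Hyp^2$; I would justify this by noting the triangle bounds a disc and using convexity of apartments, or simply by working in the union of chambers met by the disc. Within $A$, I would compute the interior angles of the geodesic triangle $v_1v_2v_3$ at each $v_i$. The angle at $v_i$ is the angular distance in $\lk_A(v_i)$ between the directions of $v_iw_{i-1}$ and $v_iw_i$. Now $v_i$ has type $\bf k$, so $\lk_X(v_i)$ is subdivided into arcs by chambers, and the link in the CAT(0) metric is a circle of circumference $2\pi$ decomposed according to the angles of the triangles at a type $\bf k$ vertex. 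The key point is to bound this angle strictly below some value forcing the angle sum of the geodesic triangle to be too small — in $\E^2$ the angles must sum to exactly $\pi$, in $\Hyp^2$ strictly less, so in either case at most $\pi$.

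The main obstacle, and the heart of the argument, is the lower bound: I would show each interior angle at $v_i$ is at least $\frac{2\pi}{3}$ (or more precisely, large enough that three of them exceed $\pi$), using the combinatorics of the chambers around $v_i$ and the fact that $w_{i-1}$ and $w_i$ are both type $\bf 2$ vertices adjacent to $v_i$. Concretely: the two edges $v_iw_{i-1}$ and $v_iw_i$ at the type $\bf k$ vertex $v_i$ are each ``between'' two chambers, and the angular gap between them is a sum of chamber angles at a type $\bf k$ vertex; because the chamber has angle $\frac{\pi}{2}$ at its type $\bf 2$ corner and angles $\frac{\pi}{k}, \frac{\pi}{m}$ at the other corners, one counts how many chambers separate consecutive type $\bf 2$ vertices in the link of $v_i$ and concludes the angle is at least some definite fraction of $2\pi$. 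Since $2k$ is the vertex stabilizer order, there are $2k$ chambers around $v_i$, the type $\bf 2$ vertices in $\lk(v_i)$ are among the $k$ ``outer'' vertices, and consecutive ones are separated by at least... — here I would plug in $k \geq 3$, $m \geq 6$ and check the numerics give angle sum $> \pi$, the desired contradiction. If a single apartment does not obviously contain the triangle, the fallback is to invoke Lemma \ref{le:convexStar}: each side lies in the star of the opposite vertex, and intersecting these stars forces the triangle into a flat (or hyperbolic) piece where the angle computation applies verbatim.
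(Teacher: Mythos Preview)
Your geodesic-triangle setup is correct: the three concatenations $v_iw_iv_{i+1}$ are indeed geodesics (the angle at a type~$\bf 2$ vertex is $\pi$), so the $6$--cycle bounds a geodesic triangle $v_1v_2v_3$ in the CAT(0) building, and the sum of its Alexandrov angles is~$\leq\pi$. The gap is in the numerics you defer. At a type~$\bf k$ vertex $v_i$ the chambers have angle $\frac{\pi}{k}$, and two distinct type~$\bf 2$ vertices in $\lk_X(v_i)$ are separated by at least two chambers, so the angle at $v_i$ is $\geq\frac{2\pi}{k}$---but that is all you get. Hence the angle sum is only bounded below by $\frac{6\pi}{k}$, which exceeds $\pi$ for $k\leq 5$, equals $\pi$ for $k=6$ (still a contradiction, since then the building is CAT($-1$)), but is strictly less than $\pi$ once $k\geq 7$. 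The hypotheses of the section place no upper bound on $k$ (only $k\geq 3$ and $m\geq 6$), so types like $(2,7,7)$ or $(2,100,6)$ are not covered by your argument. Your proposed fallback, that each side lies in the star of the opposite vertex, is also unjustified: Lemma~\ref{le:convexStar} gives convexity of $\st_X(v_3)$, but there is no reason for the endpoints $v_1,v_2$ of the opposite side to lie in that star.

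The paper's proof avoids the angle-sum obstruction by building a larger convex set. Fixing one type~$\bf k$ vertex $v$ of the cycle, it takes $S$ to be $\st_X(v)$ together with the stars of all type~$\bf 2$ vertices in $\st_X(v)$, and checks $S$ is locally convex (the key point being that at a type~$\bf m$ vertex $u$ the link $\lk_S(u)$ has diameter $\frac{4\pi}{m}<\pi$, which uses $m\geq 6$ rather than any bound on $k$). Convexity of $S$ forces the geodesic $v'v''$, hence its midpoint $w'$, into $S$; but every type~$\bf 2$ vertex of $S$ is adjacent to $v$, so Corollary~\ref{cor:convexstar} collapses $w'$ onto $w$ or $w''$, contradicting that the cycle has six distinct vertices.
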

\begin{proof}
Assume that there is such a cycle $vwv'w'v''w''v$, where the vertices $v,v',v''$ are of type $\bf k$ and $w,w',w''$ are of type $\bf 2$.
Let $S$ be the union of the star $\st_X(v)$ of $v$ in $X$ and the stars of the vertices of type $\bf 2$ in $\st_X(v)$.
We claim that $S$ is locally convex, hence convex by \cite[Thm II.4.14]{BH}. At a vertex of type $\bf k$ in $S$ that is distinct from $v$, all the triangles in $S$ have a common edge, so $S$ is locally convex as in the proof of Lemma~\ref{le:convexStar}. At a vertex $u\in S$ of type $\bf m$, there are fans of four triangles in $S$, but their corresponding angle equals $\frac{\pi}{m}$. Hence the diameter of $\lk_S(u)$ is $\frac{4\pi}{m}<\pi$, so~$S$ is locally convex at $u$, justifying the claim.
Thus the geodesic $v'v''$ lies in~$S$, whence $w'\in S$. All vertices of type $\bf 2$ in~$S$ are adjacent to $v$. Hence by Corollary~\ref{cor:convexstar} we have $w'=w=w''$, contradiction.
\end{proof}

We split the proof of Theorem~\ref{thm:dim2partCoxeter} into two steps. We first prove that $\sys{X}$ is simply-connected and then that its vertex links are $6$--large.

\begin{lemma}
\label{lem:simplycon}
The complex $\sys X$ is simply-connected.
\end{lemma}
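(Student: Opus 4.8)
The plan is to show that every loop in the $1$--skeleton of $\sys X$ is null-homotopic, reducing to the simple-connectedness of $X$ itself (which holds because the Coxeter realization of a building is simply-connected, indeed contractible in the CAT(0) metric). Since $\sys X$ and $X$ have the same vertex set, and $\sys X$ is obtained from $X$ by adding the new "type $\mathbf{k}$--$\mathbf{k}$" edges across stars of type $\mathbf{2}$ vertices, it suffices to show that each such new edge is homotopic rel endpoints, inside $\sys X$, to an edge-path in the $1$--skeleton of $X$; then any loop in $\sys X^{(1)}$ is homotopic to a loop in $X^{(1)}$, which bounds in $X\subset\sys X$.

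First I would fix a new edge $vv'$ of $\sys X$, where $v,v'$ are of type $\mathbf k$ both adjacent to a common type $\mathbf 2$ vertex $w$. By construction of $\sys X$ as the flag complex on its $1$--skeleton, the triangles $vwv'$ are already present whenever $vw$, $wv'$, and $vv'$ are edges, so the new edge $vv'$ together with the old edges $vw$ and $wv'$ bounds a triangle in $\sys X$. Hence $vv'$ is homotopic rel $\{v,v'\}$ to the length-two path $v,w,v'$ inside $\sys X$, and this path lies in $X^{(1)}$. That is really the whole argument: every new edge is the diagonal of a $2$--simplex of $\sys X$ whose other two sides are old.

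Carrying this out, I would then argue: let $\gamma$ be any loop in $\sys X^{(1)}$. Replacing each occurrence of a new edge by the corresponding length-two path through the associated type $\mathbf 2$ vertex, we obtain a loop $\gamma'$ in $X^{(1)}$ homotopic to $\gamma$ in $\sys X$ (each replacement is a homotopy across a $2$--simplex of $\sys X$). Since $X$ is simply-connected, $\gamma'$ bounds a disc in $X$, hence in $\sys X$ since $X\subset\sys X$. Therefore $\gamma$ is null-homotopic in $\sys X$, and as $\sys X$ is clearly connected, it is simply-connected.

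I do not anticipate a genuine obstacle here; the only point requiring a word of care is the claim that the triangle $vwv'$ actually lies in $\sys X$, which follows immediately because $\sys X$ is defined to be the flag complex on its $1$--skeleton and all three edges $vw,wv',vv'$ are present. (One should also note this uses nothing about $w$ being unique — Corollary~\ref{cor:convexstar} is not needed for simple-connectedness, only later.) The argument is essentially identical to the corresponding step for $\sys\Sigma$ in Section~\ref{Sec:236}, where each loop in $\sys\Sigma^{(1)}$ was homotoped into $\Sigma^{(1)}$.
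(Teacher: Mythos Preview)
Your proof is correct and follows essentially the same approach as the paper: each new edge $vv'$ bounds a triangle with the old edges $vw,wv'$ in $\sys X$, so any loop in $\sys X^{(1)}$ can be homotoped into $X^{(1)}$ and then contracted using the simple-connectedness of $X$. Your added remarks that flagness guarantees the triangle and that uniqueness of $w$ is irrelevant here are accurate and make the argument slightly more explicit than the paper's version.
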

\begin{proof}
The fundamental group of $\sys X$ is carried by its $1$--skeleton $\sys X^{(1)}$, we therefore only need to contract loops from $\sys X^{(1)}$.
For each edge $e=vv'$ in $\sys X$ connecting two vertices of type $\bf k$, there is an edge-path $\gamma$ in $X^{(1)}$ connecting $v,v'$ of combinatorial length two via a vertex of type $\bf 2$. The concatenation of $\gamma$ and $e$ bounds a triangle in $\sys X$, hence $e$ is homotopic to $\gamma$. Thus any loop in $\sys X^{(1)}$ is homotopic to a loop in $X^{(1)}$. Since $X$ is simply-connected, the complex $\sys X$ is simply-connected.
\end{proof}

\begin{lemma}
\label{lem:vertexlinks}
Links of vertices in $\sys X$ are $6$--large.
\end{lemma}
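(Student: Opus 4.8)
The plan is to analyze the link of each vertex type in $\sys X$ separately, mirroring the treatment of the $(2,3,6)$ example but being careful about the features that are special to buildings rather than Coxeter complexes. There are three vertex types — $\bf 2$, $\bf k$, and $\bf m$ — and for each I would describe the link in $X$ first, then add the new edges coming from Construction~\ref{def:sys2}, and finally identify $\sys X$'s link with a complex already known to be $6$--large via one of the criteria in Section~\ref{Sec_prelim}.

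First I would handle vertices of type $\bf m$. The link in $X$ of a type $\bf m$ vertex $u$ is the incidence graph of a generalized $2$--gon associated to the residue, i.e.\ a bipartite graph between type $\bf 2$ and type $\bf k$ vertices; as a flag complex it is a union of ``bowties'' or rather a graph in which every maximal clique is an edge of type $\bf 2$--$\bf k$. The new edges added in $\sys X$ connect two type $\bf k$ neighbors of $u$ whenever they share a common type $\bf 2$ neighbor — and by Corollary~\ref{cor:convexstar} that common type $\bf 2$ vertex is then also adjacent to $u$, so the new edges live inside $\lk_{\sys X}(u)$. The resulting graph on the type $\bf k$ vertices, together with the edges to the type $\bf 2$ vertices, is exactly the graph $\Gamma^*$ of Definition~\ref{def:gamma*} built from the graph $\Gamma$ whose vertices are the type $\bf k$ vertices and whose maximal cliques correspond to the type $\bf 2$ vertices (each type $\bf 2$ vertex being joined to a clique of type $\bf k$ vertices, and these cliques meeting only in single vertices by Corollary~\ref{cor:convexstar}). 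So I would apply Lemma~\ref{le:gamma*}: it suffices to check that $\Gamma$ has the appropriate girth. Since $\Gamma$ sits inside the building, a short cycle in $\Gamma$ would produce a cycle of type alternating $\bf 2$--$\bf k$ in $X$, and Lemma~\ref{le:2kloops} rules out length $6$; one also needs to rule out length $4$, which follows from the uniqueness statement in Corollary~\ref{cor:convexstar}. So $\Gamma$ has girth $\geq 8 > 6$, hence $\Gamma^*$ is $6$--large, hence $\lk_{\sys X}(u)$ is $6$--large.

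Next, vertices of type $\bf 2$: as in the example, the star of a type $\bf 2$ vertex $w$ consists of the triangles sharing $w$, all of which pair up into a join of an edge (the edge of type $\bf k$--$\bf m$ opposite $w$… wait, more precisely the link is a union of edges of type $\bf k$--$\bf m$, each type $\bf k$ vertex in the link being one endpoint of a triangle at $w$), and the new edges join type $\bf k$ vertices adjacent to $w$. So $\lk_{\sys X}(w)$ becomes the flag complex spanned on the complete bipartite-type graph joining the type $\bf k$ neighbors (now all pairwise adjacent via new edges) with the type $\bf m$ neighbors; this is the join of a simplex (on the type $\bf k$ vertices) with a discrete set (the type $\bf m$ vertices), which is $\infty$--large by Corollary~\ref{cor:join}. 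Finally, vertices of type $\bf k$: this is the only type whose link acquires new vertices, namely one extra type $\bf k$ vertex for each type $\bf 2$ vertex in the star, coning off the corresponding type $\bf m$--$\bf 2$--$\bf m$ subpath in $\lk_X(v)$ (which is where $\Gamma$ of the building being thick, rather than a Coxeter complex, makes the link more complicated than in the $(2,3,6)$ picture). Here I would construct the collapsing map $f\colon \lk_{\sys X}(v)\to \lk_X(v)$ sending each new type $\bf k$ vertex to the type $\bf 2$ vertex it cones off, and check the adjacency condition of Lemma~\ref{le:collaps}; since $\lk_X(v)$ is the incidence graph of a generalized polygon and hence $6$--large (indeed it has girth $\geq 2\max(\text{relevant exponents})\geq 12$ or so, being the link in a building), $\lk_{\sys X}(v)$ is $6$--large.

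The main obstacle I anticipate is the type $\bf k$ link. Verifying that the collapsing map $f$ really satisfies the precise ``adjacent iff image adjacent-or-equal'' hypothesis of Lemma~\ref{le:collaps} requires knowing exactly which pairs of new vertices, and which new-vs-old pairs, are declared adjacent in $\sys X$ — and this is where one must be careful that two distinct type $\bf 2$ vertices $w, w'$ adjacent to $v$ do not have their cone points accidentally joined, and that a cone point over $w$ is joined in $\lk_{\sys X}(v)$ to exactly the type $\bf k$ vertices sharing $w$ as a common type $\bf 2$ neighbor, i.e.\ to the original neighbors of $w$ in the link. The uniqueness part of Corollary~\ref{cor:convexstar} is exactly what makes this work: it guarantees the fibers of $f$ are cliques and that no spurious adjacencies appear. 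So the proof is really an assembly of the three link computations, with Lemma~\ref{le:2kloops} and Corollary~\ref{cor:convexstar} doing the geometric heavy lifting for the type $\bf m$ and type $\bf k$ cases respectively, and Corollary~\ref{cor:join} disposing of the type $\bf 2$ case trivially.
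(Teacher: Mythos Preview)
Your three-case structure matches the paper's, and the type $\bf 2$ case is correct. However, the type $\bf m$ argument has a genuine gap: the link $\lk_{\sys X}(u)$ is \emph{not} the graph $\Gamma^*$ of Definition~\ref{def:gamma*}. In $\Gamma^*$ two elements of $M$ (here playing the role of type $\bf 2$ vertices) are joined whenever their cliques share a vertex, and this happens constantly---any type $\bf k$ vertex in $\lk_X(u)$ lies in at least two such cliques---yet in $\lk_{\sys X}(u)$ there are no edges between type $\bf 2$ vertices at all. Separately, your claim that $\Gamma$ has girth $\geq 8$ fails in a thick building: all the type $\bf k$ neighbors of a fixed type $\bf 2$ vertex form a clique, so $\Gamma$ typically has triangles. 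What is actually needed is that the \emph{flag complex} on $\Gamma$ is $6$--large. The paper obtains this by first peeling off the type $\bf 2$ vertices with Lemma~\ref{le:amalgam} (the star of each in $\sys X_m$ is a simplex glued along a face), reducing to the subcomplex $V$ on type $\bf k$ vertices, and then observing that a full $4$- or $5$-cycle in $V$ would unfold, via Corollary~\ref{cor:convexstar}, to a locally embedded closed path of length $8$ or $10$ in $X_m$, contradicting the girth $2m\geq 12$. Lemma~\ref{le:2kloops} plays no role here.

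For the type $\bf k$ link your collapsing map $f$ is exactly the paper's, but Corollary~\ref{cor:convexstar} alone is not enough to verify the hypothesis of Lemma~\ref{le:collaps}. The subtle direction is showing that two type $\bf k$ vertices $v',v''$ in $\lk_{\sys X}(v)$ are adjacent \emph{only if} $w(v')=w(v'')$: a priori $v',v''$ could be joined in $\sys X$ via a type $\bf 2$ vertex $w_0$ that is not adjacent to $v$. If $w(v')\neq w(v'')$ in that situation, the six vertices $v,\,w(v'),\,v',\,w_0,\,v'',\,w(v'')$ form an alternating hexagon in $X$, and ruling this out is precisely the content of Lemma~\ref{le:2kloops}. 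So you have the roles of the two key ingredients swapped: Lemma~\ref{le:2kloops} is what makes the type $\bf k$ collapse work, while the girth of $X_m$ (not Lemma~\ref{le:2kloops}) handles the type $\bf m$ link.
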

\begin{proof}
First consider the link $\sys X_2$ of a vertex of type $\bf 2$. The vertices of type~$\bf k$ in $\sys X_2$ are pairwise connected by edges and hence span a simplex. Thus $\sys X_2$ is a join of that simplex with a discrete set of vertices of type $\bf m$, and is hence $\infty$--large by Corollary~\ref{cor:join}.

Now consider the link $\sys X_m$ of a vertex of type $\bf m$. The star of each type $\bf 2$ vertex in $\sys X_m$ is a cone over the simplex formed by its adjacent vertices of type $\bf k$. Hence $\sys X_m$ is glued out of such cones and the subcomplex $V$ spanned by the vertices of type $\bf k$. By Lemma~\ref{le:amalgam} it suffices to show that $V$ is $6$--large. Suppose that there is a full cycle $\gamma$ in $V$ of length $4$ or $5$.
By Corollary~\ref{cor:convexstar}, for any pair of consecutive vertices $v, v'$ in $\gamma$ there is a unique vertex $w$ in $X_m$ of type $\bf 2$ adjacent to both $v$ and $v'$.
For any triple of consecutive vertices $v, v',v''$ in $\gamma$, consider the corresponding vertices $w,w'$ in $X_m$ of type $\bf 2$ forming an edge-path $vwv'w'v''$. Since $\gamma$ does not have diagonals, we have $w\neq w'$. Thus $\gamma$ gives rise to a locally embedded closed edge-path in $X_m$ of length $8$ or $10$. This contradicts the fact that the girth of $X_m$ is $2m$. Thus $V$ and $\sys X_m$ are $6$--large.

The link $\sys X_k$ of a vertex of type $\bf k$ contains all three types of vertices.
Each vertex $v$ of type $\bf k$ in $\sys X_k$ is adjacent to a unique vertex $w=w(v)$ of type $\bf 2$ in $\sys X_k$ (Corollary~\ref{cor:convexstar}) as well as to all type $\bf m$ neighbors of $w$. By Corollary~\ref{cor:convexstar}, the vertex $v$ is not adjacent to any other vertices of type $\bf m$ in~$\sys X_k$. By Lemma~\ref{le:2kloops}, two vertices $v,v'$ of type $\bf k$ in $\sys X_k$ are adjacent if and only if $w(v)=w(v')$. Hence the retraction $f\colon \sys X_k\rightarrow X_k$ assigning $f(v)=w(v)$ satisfies the hypothesis of Lemma~\ref{le:collaps}. Since $X_k$ is of girth $2k$, it is $6$--large, thus $\sys X_k$ is $6$--large as well.
\end{proof}

Thus the systolization $\sys{X}$ of the Coxeter realization of a building $X$ from Construction~\ref{def:sys2} is indeed systolic, as required in Theorem~\ref{thm:dim2partCoxeter}.

\section{Rank four}
\label{Sec_dim3}

In this section we construct a systolization of the Coxeter realization of a $3$--dimensional building required in Theorem~\ref{thm:dim3}:

\begin{thm}\label{thm:dim3partCoxeter}
Let $W$ be a Coxeter group of rank $4$ with finite exponents. Assume that all of its special rank $3$ subgroups are infinite and not of type $(2,4,4), (2,4,5)$ or $(2,5,5)$. Moreover, assume that there is at most one exponent $2$. Then the Coxeter realization of a building of type $W$ admits a systolization.
\end{thm}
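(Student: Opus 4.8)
We may assume that $W$ has exactly one exponent equal to $2$; if all exponents are $\geq 3$ the Coxeter realization of $X$ is already systolic and $\sys X=X$. Write the generating set as $\{s,t,a,b\}$ with $m_{st}=2$. Vertices of $X$ have four types. The type~$s$ and type~$t$ vertices, which I call \emph{good}, have stabilizers $W_{\{t,a,b\}}$ and $W_{\{s,a,b\}}$, rank~$3$ groups all of whose exponents are $\geq 3$, so their links in $X$ are infinite rank~$3$ buildings that are already systolic. The type~$a$ and type~$b$ vertices have stabilizers of triangle type $(2,m_{sb},m_{tb})$ and $(2,m_{sa},m_{ta})$; by hypothesis these are infinite and not among the three excluded types, so the link $L_v:=\lk_X(v)$ of such a vertex is an infinite rank~$3$ building of an allowed type and Construction~\ref{def:sys2} provides a systolization $\sys{L_v}$. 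The sole obstruction to $X$ being systolic is the family of edges with Klein four-group stabilizer $W_{\{s,t\}}$ — equivalently the edges joining a type~$a$ to a type~$b$ vertex — whose links in $X$ are thick complete bipartite graphs, of girth~$4$, and which, seen from inside $L_v$, are precisely the type~$\mathbf 2$ vertices. Let $\sys X$ be the flag complex on the $1$--skeleton of $X$ augmented by all edges of the various $\sys{L_v}$ that are not already in $X$; each such new edge joins two good vertices of the same type, and for each Klein-four edge $e$ it cones one part (or, if the choices from the two endpoints disagree, both parts) of the complete bipartite graph $\lk_X(e)$ into a clique. The construction is canonical, so the type preserving automorphism group of $X$ acts on $\sys X$, and $X\subset\sys X$ is a quasi-isometry.

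The rank~$3$ argument rested on the CAT(0) metric on $X$ with Euclidean or hyperbolic apartments; in rank~$4$ no such metric exists because of the Klein-four edges, so I would instead equip $X$ with a piecewise Euclidean or hyperbolic metric whose tetrahedra have dihedral angle $\geq\pi/m_{ij}$ at an edge of type $\{i,j\}$ — in particular $\geq\pi/2$ at the Klein-four edges — and whose vertex-link spherical triangles are therefore realizable. Gromov's link condition for this metric reduces at the edges to these angle bounds and at the vertices to the statement that the rank~$3$ building links, with the induced piecewise spherical metric, are CAT(1); that is exactly the content of the appendix. With this metric in hand the analogues of Lemma~\ref{le:convexStar}, Corollary~\ref{cor:convexstar} and Lemma~\ref{le:2kloops} hold in $X$ — stars of good vertices are convex, a Klein-four panel determines its two neighbouring vertices of type $a$ and $b$ uniquely, and there are no short cycles alternating between the two ends of such a panel — the $\pi$--convexity criterion of the appendix supplying the local-to-global step. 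Simple-connectedness of $\sys X$ then follows as in Lemma~\ref{lem:simplycon}: each new edge is homotopic across a triangle of $\sys X$ to a length-two edge-path in $X^{(1)}$, so every loop in $\sys X^{(1)}$ is homotopic into $X^{(1)}$, hence null-homotopic since buildings are simply-connected.

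For the vertex links of $\sys X$: if $v$ is of type~$a$ or~$b$ I claim $\lk_{\sys X}(v)=\sys{L_v}$. Every edge of $\sys{L_v}$ is present by construction, and no foreign new edge — one produced by the systolization of some other bad vertex but with both endpoints adjacent to $v$ — can occur, by convexity of stars in $X$. Hence $\lk_{\sys X}(v)=\sys{L_v}$ is systolic by Theorem~\ref{thm:dim2partCoxeter}, in particular contractible, hence simply-connected. If $u$ is a good vertex, then $\lk_{\sys X}(u)$ is $\lk_X(u)$ — an already-systolic, hence contractible, rank~$3$ building — with extra vertices of a single good type, each coning off the link of a Klein-four panel through $u$, together with the new edges among them; since the relevant bipartite links have become contractible after the coning, an argument parallel to the $\sys X_k$ case of Lemma~\ref{lem:vertexlinks} (the enlarged complex collapses back onto $\lk_X(u)$, and one may invoke Lemma~\ref{le:collaps}) shows $\lk_{\sys X}(u)$ is homotopy equivalent to $\lk_X(u)$, hence simply-connected. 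Thus $\sys X$ is simply-connected with simply-connected vertex links.

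It remains to verify that every edge link of $\sys X$ is $6$--large; granting this, each vertex link is simply-connected with $6$--large vertex links, hence systolic, hence $6$--large by \cite[Prop~1.4]{JS}, and $\sys X$ is systolic, which is what is claimed. A Klein-four edge $e$ has $\lk_{\sys X}(e)$ a thick complete bipartite graph with one or both parts coned into a clique, i.e.\ a join of a simplex with a (possibly empty) discrete set, hence $\infty$--large by Corollary~\ref{cor:join}. Every other edge $f$ of $X$ has $\lk_X(f)$ a thick generalized $m$--gon with $m\geq 3$, of girth $2m\geq 6$; the new edges meeting $\lk_X(f)$, if any, join good vertices of a single type lying in one of its two parts, and — exactly as in the $\sys X_m$ case of Lemma~\ref{lem:vertexlinks} — one must show that two such vertices are joined only when they are already linked in the generalized polygon by a path recording a cycle of length at least $6$, so that the augmented graph is still $6$--large. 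Pinning this down, and organizing the bookkeeping over the six edge types against the chosen clique directions, is the step I expect to be the main obstacle; it is carried out in Section~\ref{sec:edge}, again relying on the convexity input provided by the appendix.
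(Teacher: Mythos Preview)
Your overall architecture is right --- equip $X$ with the piecewise Euclidean $\widetilde{A}_3$ metric, use the appendix to get CAT(1) vertex links and hence CAT(0), deduce convexity of stars, and then verify simple-connectedness of $\sys X$ and of vertex links before turning to edge links. But your actual construction is not the one the paper uses, and the discrepancy breaks a step you treat as routine.

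Applying Construction~\ref{def:sys2} separately to each bad-vertex link produces only what the paper calls \emph{friend} edges: pairs of type~$\mathbf c$ (respectively~$\mathbf d$) vertices sharing a common $\mathbf{ab}$ edge. The paper's Construction~\ref{def:sys3} adds strictly more, namely \emph{acquaintances}: same-type pairs sharing a common edge labeled $k$ or $k'$. The paper says this explicitly --- the induced systolization on a bad-vertex link is ``slightly thicker'' than Construction~\ref{def:sys2}. Your claim that $\lk_{\sys X}(v)=\sys{L_v}$ for bad $v$ is therefore false in general. Concretely, in Case~I the link of a type~$\mathbf a$ vertex has Construction~\ref{def:sys2} cone the type~$\mathbf d$ side, while the link of a type~$\mathbf b$ vertex cones the type~$\mathbf c$ side; the type~$\mathbf c$ edges produced at each $\mathbf b$-neighbour of $v$ land inside $\lk_X(v)$ as ``foreign'' edges not present in $\sys{L_v}$. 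You even anticipate this (``if the choices from the two endpoints disagree'') and then immediately contradict it. So you cannot simply invoke Theorem~\ref{thm:dim2partCoxeter} for the bad-vertex links, and the good-vertex links are likewise not of the clean $\sys X_k$ shape from Lemma~\ref{lem:vertexlinks}.

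More seriously, the edge-link analysis you defer to Section~\ref{sec:edge} genuinely relies on the acquaintance edges. For example, in the link of an $\mathbf{ad}$ edge the paper uses that \emph{all} type~$\mathbf c$ vertices become pairwise adjacent --- precisely because any two of them share that $\mathbf{ad}$ edge and are therefore friends or acquaintances. With friends only, this fails, and the amalgam/collapse arguments of Section~\ref{sec:edge} do not go through as written. The paper's real work is the classification of triples and quadruples of mutually connected same-type vertices (Lemmas~\ref{lem:cloopsI} and~\ref{lem:cloopsII}, proved via the strongly locally convex neighbourhoods of Sublemmas~\ref{sub:I} and~\ref{sub:II}); this classification is what makes each of the six edge-link cases tractable, and it is formulated for the friends-plus-acquaintances relation, not for friends alone.
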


Let $T=\bf{abcd}$ be the tetrahedron that is the base chamber of the Coxeter realization of a building $X$ of type $W$.
We label the edges of $T$ by the exponents in the Coxeter presentation. If all the exponents are $\geq 3$, then $X$ is systolic to begin with. Thus we further assume that there is precisely one edge labeled by $2$, say $\bf {ab}$.
Without loss of generality we can also assume that the edge $\bf{ac}$ is labeled by $m\geq 6$ and the edge $\bf{ad}$ is labeled by $k\geq 3$.
We have two possible labelings of $\bf{bc}$ and $\bf{bd}$, see Figure~\ref{fig_tethrahedron}.
\begin{itemize}
 \item[Case I.] The edge $\bf{bc}$ is labeled by $k'\geq 3$ and the edge $\bf{bd}$ is labeled by $m'\geq 6$.
 \item[Case II.] The edge $\bf{bc}$ is labeled by $m'\geq 6$ and the edge $\bf{bd}$ is labeled by $k'\geq 3$.
\end{itemize}
The edge $\bf{cd}$ is labeled by $l\geq 3$. Note that if, keeping the other conditions, one allowed more edges labeled by $2$, then it could only be the edge $\bf{cd}$, and the rest of the edges would be labeled as in case I. However, in our article we only allow the edge $\bf {ab}$ to be labeled by $2$.

\begin{figure}[h]
\begin{center}
\begin{minipage}[b]{0.49\textwidth}
	\resizebox{!}{0.16\textheight}{\input{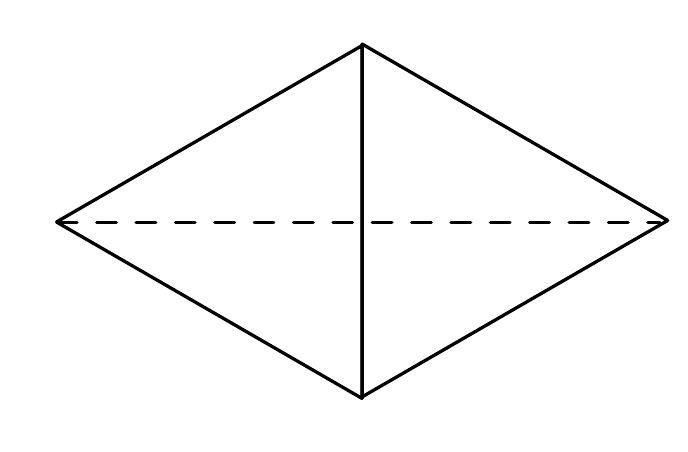_t}}
  \end{minipage}
  \begin{minipage}[b]{0.49\textwidth}
    	\resizebox{!}{0.16\textheight}{\input{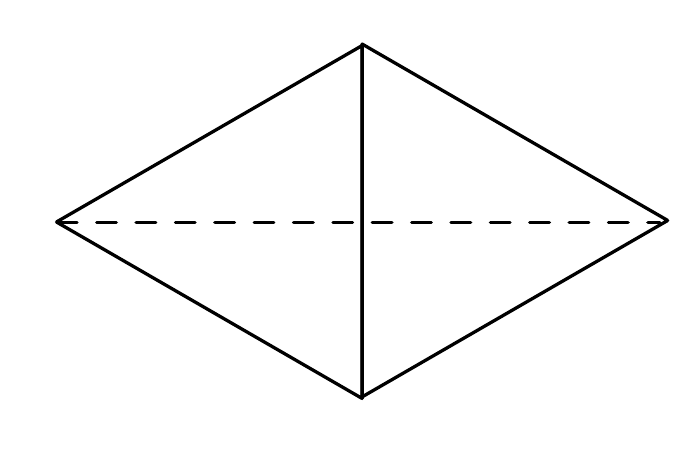_t}}
\end{minipage}
\caption[tetrahedron]{The two cases of admissible tetrahedral types.}
\label{fig_tethrahedron}
\end{center}
\end{figure}

A simplex in $X$ is of \emph{type} $I\subset {\bf abcd}$ if it maps to $I$ under the retraction to~$T$. Unless otherwise mentioned we will denote vertices of type $\bf a$ by $a,a',a''$ etc. We say that two simplices are \emph{adjacent} if they span a simplex.

We now proceed with the construction of a systolization.

\begin{defin}\label{def:friends}
Two vertices of the same type $\bf c$ or $\bf d$ adjacent to a common edge of type $\bf{ab}$ are \emph{friends}.
Two vertices of the same type $\bf c$ or $\bf d$ that are not friends but adjacent to a common edge of type labeled by $k$ or $k'$ are \emph{acquaintances}.
\end{defin}

Note that in case II there are no acquaintances of type $\bf d$.

\begin{rem}
\label{rem:extra}
The link of an edge of type $\bf {ab}$ is a complete bipartite graph. Hence friends are also adjacent to common edges of types labeled by $k$ or $k'$, except for friends of type $\bf d$ in case II.
\end{rem}

\begin{constr}\label{def:sys3}
Let $X$ be the Coxeter realization of a building of the type described in case I or case II.
The \emph{systolization} $\sys X$ of $X$ is the flag simplicial complex spanned on the following $1$--skeleton. The vertex set of $\sys{X}$ is the same as the vertex set of $X$. Two vertices are adjacent in $\sys{X}$ if they are either adjacent in $X$ or are friends or acquaintances.
More explicitly, by Remark~\ref{rem:extra}, that means:
\begin{enumerate}[label={Case (**)}, leftmargin=*]
 \item[Case I:]
      vertices of type $\bf c$ adjacent to a common edge of type $\bf{ad}$, or
      vertices of type $\bf d$ adjacent to a common edge of type $\bf{bc}$.
 \item[Case II:]
      vertices of type $\bf c$ adjacent to a common edge of type $\bf{ad}$ or $\bf{bd}$, or
      vertices of type $\bf d$ adjacent to a common edge of type $\bf{ab}$.
\end{enumerate}
\end{constr}

In fact, if $k\geq 6$ or $k'\geq 6$, then fewer new edges would have done the job of systolizing $X$. To make the argument uniform we chose Construction~\ref{def:sys3} over a ``minimal'' one.

Our systolization of $X$ induces systolizations of its $2$--dimensional vertex links which are slightly thicker than the ones defined in Construction~\ref{def:sys2}. For example if $X$ has a vertex link that is of type $(2,3,6)$, then in Construction~\ref{def:sys3} we also add edges between pairs of vertices of type~$\bf 6$ adjacent to the same vertex of type $\bf 2$, and not only between such pairs of vertices of type $\bf 3$ as in Construction~\ref{def:sys2}.

Before we dive into the proof of Theorem~\ref{thm:dim3partCoxeter}, we first establish some preliminary lemmas on the combinatorial structure of $X$. As in Section~\ref{Sec:dim2}, we find it convenient to use a metric argument.
The tetrahedron~$T$ admits a Euclidean $\widetilde{A}_3$ metric, in which the dihedral angles at edges $\bf ab$ and $\bf cd$ are $\frac{\pi}{2}$, and the remaining dihedral angles are $\frac{\pi}{3}$.
In particular, the dihedral angle $\frac{\pi}{n}$ at an edge labeled by an exponent $i$ is $\geq \frac{\pi}{i}$, or equivalently $i\geq n$.
This equips $X$ with a complete geodesic metric \cite[Thm~I.7.19]{BH}. By Theorem~\ref{thm:app:CAT(0)} in the appendix, the vertex links of $X$ are CAT(1). Since $X$ is simply connected, by \cite[Thm~II.5.4]{BH} the metric on $X$ is CAT(0). We now discuss a criterion allowing to verify that a subcomplex of $X$ is convex.

A \emph{fan} of tetrahedra at an edge $e\in X$ is a subcomplex of $\st_X(e)$ that is a join of $e$ with a path graph in $\lk_X(e)$. The \emph{length} of the fan is the length of the path graph. An \emph{outer fan} of a subcomplex $Y\subset X$ at an edge $e\in Y$ is a fan whose path graph is disjoint from $\lk_Y(e)\subset \lk_X(e)$ except at its endpoints. Note that if $e$ is an edge of $Y$ with dihedral angle $\frac{\pi}{n}$, then $Y$ is locally convex at the interior of $e$ if and only if each outer fan of $Y$ at $e$ has length $\geq n$. We need the following strengthening of this property.

\begin{defin}
\label{def:stronglyedge}
A simplicial map $\varphi\colon Y\rightarrow X$ is \emph{strongly locally convex} at an edge $e\in Y$, if $\varphi_{|\st_Y(e)}$ is an embedding and all of the following conditions hold. We pull back the edge types and angles from $X$ to $Y$ via~$\varphi$.
\begin{itemize}
\item
If the dihedral angle at $e$ is $\frac{\pi}{n}$, then each outer fan of $\varphi\big(\st_Y(e)\big)\subset \st_X(\varphi(e))$ at $\varphi(e)$ has length $\geq n$.
\item
If $e$ is of type $\bf {cd}$, then each outer fan of $\varphi\big(\st_Y(e)\big)\subset \st_X(\varphi(e))$ at $\varphi(e)$ has length $\geq 3$.
\item
If $e$ is of type labeled by $m$ or $m'$, then each outer fan of $\varphi\big(\st_Y(e)\big)\subset \st_X(\varphi(e))$ at $\varphi(e)$ has length $\geq 7$.
\end{itemize}
\end{defin}

\begin{rem}
\label{rem: strong->convex}
Let $Y$ be a connected simplicial complex with connected vertex links. Suppose that $\varphi\colon Y\rightarrow X$ is strongly locally convex at all of its edges. Then $\varphi$ is an embedding and $\varphi(Y)$ is convex in $X$. Indeed, by \cite[Thm II.4.14]{BH} it suffices to show that $\varphi$ is a local isometry at every vertex $y\in Y$. Equivalently, by the argument for \cite[Prop 2.2]{BW}, it suffices to show that $\lk_Y(y)$ embeds in $\lk_X(y)$ and is $\pi$--convex. This follows from Theorem~\ref{thm:app:convex} in the appendix.
\end{rem}

\begin{lemma}\label{le:convexStar3}
Vertex and edge stars in $X$ are convex.
\end{lemma}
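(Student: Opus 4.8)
To prove Lemma~\ref{le:convexStar3} I would use Remark~\ref{rem: strong->convex}: it suffices to exhibit, for each vertex star $S=\st_X(v)$ and each edge star $S=\st_X(e)$, a strongly locally convex inclusion $\varphi\colon S\hookrightarrow X$, i.e.\ to check the three bullet conditions of Definition~\ref{def:stronglyedge} at every edge of $S$. Since $\varphi$ is the inclusion and the restriction to the star of any edge of $S$ is automatically an embedding, this amounts to a uniform check on outer fans. Before starting I would record the basic angle data: in the $\widetilde A_3$ metric the dihedral angle at $\bf{ab}$ and $\bf{cd}$ is $\tfrac{\pi}{2}$ and at $\bf{ac},\bf{ad},\bf{bc},\bf{bd}$ it is $\tfrac{\pi}{3}$; since a building has thickness $\geq 3$, the full link of an edge of $X$ is a complete bipartite graph of girth $4$, and the link of a vertex of $X$ is the corresponding rank~$3$ building.

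\textbf{Edge stars.} Fix an edge $e$ of $S=\st_X(e)$; it is either $e$ itself or (for the interesting cases) shares a vertex with $e$. For the first bullet I need: if $e'$ is an edge of $S$ with dihedral angle $\tfrac{\pi}{n}$, then every outer fan of $S$ at $e'$ has length $\geq n$. When $e'$ is an interior edge of $S$ (meaning $\lk_S(e')$ is the whole $\lk_X(e')$) there is no outer fan at all, so the condition is vacuous; the only edges of $S=\st_X(e)$ where an outer fan can occur are those sharing exactly one vertex with $e$. For such an edge $e'$, one vertex lies on $e$ and the link $\lk_S(e')$ is the star in $\lk_X(e')$ of the remaining vertex of $e$; an outer fan corresponds to a path in $\lk_X(e')$ leaving and returning to this star, and since $\lk_X(e')$ is complete bipartite of girth $4$ (using thickness $\geq 3$), the shortest such path has length $4$. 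Comparing $4$ with the relevant $n\in\{2,3\}$ and with the special thresholds $3$ (for type $\bf{cd}$) and $7$ (for type labeled $m$ or $m'$) is where the real content sits.

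\textbf{The obstacle.} The main difficulty is the third bullet: for edges of type labeled by $m$ or $m'$ (hence $m,m'\geq 6$) I must show outer fans have length $\geq 7$, and a generic complete-bipartite argument only gives $4$. Here I would use that the only outer fans that arise in an edge or vertex star are the ones circling around inside a rank~$3$ vertex link, where the relevant cycle is not a short bipartite cycle but a cycle in the rank~$2$ residue of type $m$ or $m'$, whose girth is $2m\geq 12$ or $2m'\geq 12$; the corresponding fan then has length $\geq 2m$ in the link, which translates to length $\geq 7$ (indeed $\geq 6$) after passing to the edge star. I also need the type $\bf{cd}$ bullet, where $l\geq 3$ already gives dihedral-angle threshold $n\leq 3$ matched against fan length $\geq 4$, so it is automatic; and the ordinary angle bullet, where $n\in\{2,3\}\leq 4$. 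For vertex stars $S=\st_X(v)$ the same analysis applies edge by edge: an edge $e'$ of $S$ either contains $v$ (then $\lk_S(e')=\lk_X(e')$, no outer fan) or is opposite to $v$ in a tetrahedron of $S$ (then $\lk_S(e')$ is the star of $v$ inside $\lk_X(e')$, and we are back to the complete-bipartite / rank~$2$-residue count above). Assembling these cases, $\varphi$ is strongly locally convex at every edge, so by Remark~\ref{rem: strong->convex} the star is convex.

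\textbf{Write-up.} Concretely I would structure the proof as: (1) reduce to checking Definition~\ref{def:stronglyedge} via Remark~\ref{rem: strong->convex}; (2) observe that an outer fan of a star exists only at edges meeting the central simplex in a proper face, and identify $\lk_S(e')$ in each such case; (3) bound outer fan lengths from below using girth $4$ of $\lk_X(e')$ in general and girth $2m$ (resp.\ $2m'$) of the rank~$2$ residue for edges labeled $m$ (resp.\ $m'$); (4) compare with the three thresholds $n$, $3$, $7$. The only nonroutine point is step~(3) for the $m,m'$ edges, and it hinges precisely on the exponents being $\geq 6$, which is why the hypothesis enters.
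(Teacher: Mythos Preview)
Your overall strategy coincides with the paper's: reduce to Remark~\ref{rem: strong->convex} and verify the three bullets of Definition~\ref{def:stronglyedge} by bounding outer-fan lengths at each edge $e'$ of the star. However, the computation at the heart of the argument is wrong. You assert that ``the full link of an edge of $X$ is a complete bipartite graph of girth~$4$.'' In a rank~$4$ building the link of an edge is a rank~$2$ building, i.e.\ a generalized $i$-gon where $i$ is the exponent labeling that edge, and its girth is~$2i$; it is complete bipartite only for the edge of type~$\bf{ab}$ (exponent~$2$). Moreover, even in the complete bipartite case the shortest outer fan at a vertex star has length~$2$, not~$4$: a path $b-a-b'$ with $a\neq v$ already leaves and returns to $\st(v)$.

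Once the girth is correctly identified, the bound is uniform and immediate. For an edge $e'$ of $\st_X(v)$ not incident to~$v$, the link $\lk_S(e')$ is the star of $v$ in $\lk_X(e')$, and any outer fan closes up to a cycle by appending the two edges through~$v$; hence its length is $\geq 2i-2$. Since the $\widetilde A_3$ dihedral angle $\tfrac{\pi}{n}$ at $e'$ always satisfies $n\leq i$, we get $2i-2\geq i\geq n$ for the first bullet; for type $\bf{cd}$ we have $i=l\geq 3$, so $2i-2\geq 4\geq 3$; and for edges labeled $m$ or $m'$ we have $i\geq 6$, so $2i-2\geq 10\geq 7$. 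This is precisely the paper's proof. Your separate ``rank-$2$ residue'' patch for the third bullet is really just this same link computation stated obliquely, and as written does not yield the bound. Two smaller points: the paper does not assume thickness $\geq 3$, and the argument does not need it; and for edge stars you omit the edges $e'$ disjoint from the central edge~$e_0$ (opposite in a tetrahedron), where $\lk_S(e')$ is a single edge of $\lk_X(e')$ and the outer-fan bound is $2i-1$, so the case is harmless but should be recorded.
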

\begin{proof}
Let $e$ be an edge in $S=\st_X(v)$ that is not incident to $v$. Suppose that the type of $e$ is labeled by an exponent $i$ and the angle at $e$ equals~$\frac{\pi}{n}$. Then the outer fans of $\st_S(e)\subset X$ at $e$ have length $\geq 2i-2\geq i\geq n$, verifying the first item of Definition~\ref{def:stronglyedge}. If $e$ is of type $\bf cd$, then $i=l\geq 3$, verifying the second item. Finally, if $i=m$ or $m'$, then $i\geq 6$ and hence $2i-2\geq 10$, verifying the third item. Thus (the inclusion in $X$ of) $S$ is strongly locally convex at $e$. Consequently, $S$ is convex by Remark~\ref{rem: strong->convex}. The proof that edge stars are convex is analogous.
\end{proof}

\begin{cor}
\label{cor:friends}
\begin{enumerate}[label={\upshape(\roman*)}]
\item
Let $c,c'$ be friends of type $\bf c$ in the link of an edge $ab$ of type $\bf{ab}$ in $X$.
The vertices $a,b$ are unique of types $\bf{a,b}$ adjacent to both $c,c'$.
Any vertex of type~$\bf{d}$ adjacent to both $c,c'$ is adjacent to $a$ and~$b$.
\item
Let $d,d'$ be friends of type $\bf d$ in the link of an edge $ab$ of type $\bf{ab}$ in~$X$.
The vertices $a,b$ are unique of types $\bf{a,b}$ adjacent to both $d,d'$. Any vertex of type~$\bf{c}$ adjacent to both $d,d'$ is adjacent to $a$ and~$b$.
\end{enumerate}
\end{cor}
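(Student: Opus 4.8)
The plan is to deduce Corollary~\ref{cor:friends} from the convexity of edge stars (Lemma~\ref{le:convexStar3}) in exactly the way Corollary~\ref{cor:convexstar} was deduced from Lemma~\ref{le:convexStar} in the rank-three case. I will treat item (i); item (ii) is symmetric under interchanging the roles of $\bf c$ and $\bf d$, so I would simply say ``the proof of (ii) is analogous'' at the end.

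First I would fix friends $c,c'$ of type $\bf c$ adjacent to a common edge $ab$ of type $\bf{ab}$, so that $cabc'$ is an edge-path in $X$ through vertices $a,b$ of types $\bf a,\bf b$. The key point is that, in the $\widetilde A_3$ metric, this edge-path is a local geodesic: at $a$ the two triangles $cab$ and $c'ab$ (more precisely the corresponding faces of chambers) meet along the edge $ab$, and since all dihedral angles of $T$ are $\le \frac\pi2$, the link of $a$ in $\st_X(ab)$ restricted to the relevant $2$--complex has the path from $c$ to $c'$ subtending an angle $\ge\pi$; the same holds at $b$. Hence $cabc'$ is a geodesic in $X$ and therefore is the unique geodesic with those endpoints by CAT(0). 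In particular, if $\bar a,\bar b$ were another pair of type $\bf a,\bf b$ vertices adjacent to both $c$ and $c'$, the path $c\bar a\bar bc'$ would be another geodesic with the same endpoints, a contradiction unless $\bar a=a$, $\bar b=b$. This gives the uniqueness statement.

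For the second assertion of (i), suppose a vertex $d$ of type $\bf d$ is adjacent to both $c$ and $c'$. Then $c,c'$ both lie in the star $\st_X(d)$, hence, since that star is convex by Lemma~\ref{le:convexStar3} (applied to the vertex $d$), the whole geodesic segment from $c$ to $c'$ lies in $\st_X(d)$. But that geodesic is the edge-path $cabc'$, so in particular $a,b\in\st_X(d)$, i.e.\ $d$ is adjacent to $a$ and to $b$.

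The only real work is verifying that $cabc'$ is genuinely a local geodesic — i.e.\ computing that the relevant angle at $a$ and at $b$ is $\ge\pi$ — which is where one must use that in the $\widetilde A_3$ metric the dihedral angles at $\bf{ab}$ and $\bf{cd}$ are $\frac\pi2$ and the others $\frac\pi3$, all $\le\frac\pi2$; the ``friends'' meet along the type-$\bf{ab}$ edge, and two chambers glued along a codimension-one face with angle $\frac\pi2$ on each side contribute the needed angle $\pi$. I would state this as the one computation and otherwise quote Lemma~\ref{le:convexStar3} and CAT(0) uniqueness of geodesics verbatim. I expect the main (mild) obstacle to be phrasing the local-geodesic check cleanly in terms of links rather than getting bogged down in which $2$--dimensional subcomplex one restricts to, but since the edge $ab$ has type $\bf{ab}$ and its link is a complete bipartite graph (Remark~\ref{rem:extra}), the relevant picture at $a$ and $b$ is a flat ``book'' of two pages meeting at right angles, which makes the angle computation immediate.
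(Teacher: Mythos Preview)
Your overall strategy---identify the geodesic from $c$ to $c'$ and then invoke convexity of stars (Lemma~\ref{le:convexStar3})---matches the paper's, but the central claim that the \emph{edge-path} $c\!-\!a\!-\!b\!-\!c'$ is a geodesic is wrong, and this breaks both the uniqueness argument and (as written) the argument for the $\bf d$-vertex.

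At the vertex $a$ the incoming and outgoing directions of your path are along the edges $ca$ and $ab$; the angle between them is the face angle of the Euclidean triangle $abc$ at $a$, which is strictly less than~$\pi$. So the path is not locally geodesic at $a$ (nor at $b$). What \emph{is} true, and what the paper uses, is that the two $2$--simplices $abc$ and $abc'$ meet along the edge $ab$ at dihedral angle~$\pi$ (the link of $ab$ is bipartite with edges of length $\frac{\pi}{2}$, so $c$ and $c'$ are at distance $\pi$ there). Hence the geodesic $cc'$ is the straight segment in the flat region $abc\cup abc'$ and crosses the edge $ab$ at an interior point---not through the vertices $a$ and $b$.

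With this correction the argument for the $\bf d$-vertex goes through unchanged: $c,c'\in\st_X(d)$ and convexity force the geodesic, hence a point of $ab$, into $\st_X(d)$; since stars are subcomplexes this gives $a,b\in\st_X(d)$. For uniqueness, however, your ``competing geodesic'' idea does not work even after the fix: you are implicitly assuming a rival $\bar a$ comes paired with an adjacent $\bar b$, whereas the statement asserts uniqueness of each vertex separately. The paper instead argues directly: if $a'$ of type $\bf a$ is adjacent to $c$ and $c'$, convexity of $\st_X(a')$ forces the geodesic $cc'$---hence a point of $ab$, hence $a$ itself---into $\st_X(a')$, contradicting the absence of type-$\bf{aa}$ edges. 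You should replace your uniqueness paragraph with this argument.
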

\begin{proof}
For assertion (i), note that the angle between the triangles $abc$ and $abc'$ is $\pi$, hence the geodesic $cc'$ passes through $ab$. If there is another vertex $a'$ of type $\bf a$ adjacent to both $c$ and $c'$, then by Lemma~\ref{le:convexStar3} the geodesic $cc'$ is contained in $\st_X(a')$. Thus $a\in \st_X(a')$, which is a contradiction. Remaining assertions follow in the same way.
\end{proof}

\begin{cor}
\label{cor:acquaint}
\begin{enumerate}[label={\upshape(\roman*)}]
\item
Let $c,c'$ be acquaintances of type $\bf c$ in the link of an edge $ad$ of type $\bf{ad}$ in $X$. The vertices $a,d$ are unique of types $\bf{a,d}$ adjacent to both $c,c'$. There is no vertex of type $\bf{b}$ adjacent to both $c,c'$.
\item
In case $\mathrm{I}$, let $d,d'$ be acquaintances of type $\bf d$ in the link of an edge $bc$ of type $\bf{bc}$ in $X$. The vertices $b,c$ are unique of types $\bf{b,c}$ adjacent to both $d,d'$. There is no vertex of type $\bf{a}$ adjacent to both $d,d'$.
\item
In case $\mathrm{II}$, let $c,c'$ be acquaintances of type $\bf c$ in the link of an edge $bd$ of type $\bf{bd}$ in $X$. The vertices $b,d$ are unique of types $\bf{b,d}$ adjacent to both $c,c'$. There is no vertex of type $\bf{a}$ adjacent to both $c,c'$.
\end{enumerate}
\end{cor}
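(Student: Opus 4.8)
The plan is to mimic the proof of Corollary~\ref{cor:friends}, replacing the "angle $\pi$ between two triangles sharing an edge of type $\bf{ab}$" with a computation showing that two acquaintances are still joined by a \emph{geodesic} passing through the common edge along which they are defined. In each of the three cases, acquaintances $c,c'$ (resp.\ $d,d'$) lie in the link of an edge $e$ of type labeled by $k$ or $k'$, and both are adjacent to the two endpoints of $e$; say these endpoints are $a,d$ in case (i). The two triangles containing $e$ and passing through $c$ (resp.\ $c'$) meet along $e$ with dihedral angle $\frac{2\pi}{k}$ (or $\frac{2\pi}{k'}$) on each side; since the full link of $e$ is a cycle of length $2k$ (resp.\ $2k'$) and $c,c'$ sit at combinatorial distance two in it, the two arcs of that link between $c$ and $c'$ that pass through the vertices of type $\bf a$ and $\bf d$ (the endpoints of $e$) each subtend angle $\frac{2\pi}{k}\le\pi$ (using $k\ge 3$). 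Hence the concatenation $cac'$ (or $cdc'$) is a local geodesic at the midpoint, and being a path of two edges it is in fact \emph{the} geodesic from $c$ to $c'$; in particular it is the unique geodesic and it passes through both $a$ and $d$.

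Granting this, the two uniqueness statements follow exactly as in Corollary~\ref{cor:friends}: if some other vertex $a'$ of type $\bf a$ were adjacent to both $c$ and $c'$, then by convexity of $\st_X(a')$ (Lemma~\ref{le:convexStar3}) the geodesic $cc'$ lies in $\st_X(a')$, forcing $a\in\st_X(a')$, which is absurd since distinct vertices of the same type are non-adjacent. The same argument with $\st_X(d')$ handles uniqueness of $d$. For the last clause — that no vertex $b$ of type $\bf b$ is adjacent to both $c$ and $c'$ — suppose such a $b$ existed; then again $cc'\subset\st_X(b)$ by Lemma~\ref{le:convexStar3}, so both $a$ and $d$ lie in $\st_X(b)$, i.e.\ $ab$ is an edge of type $\bf{ab}$ and $bd$ is an edge. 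But then $c$ and $c'$ would be \emph{friends} (adjacent to the common edge $ab$ of type $\bf{ab}$), contradicting the hypothesis that they are acquaintances and not friends; and in fact the simplex $abcd$-type constraints on chambers of $X$ would already be violated. The arguments for (ii) and (iii) are verbatim the same after permuting the roles of the vertex types according to the labeling of the relevant edge, noting that in case II it is the edge $\bf{bd}$ (labeled $k'$) that carries the acquaintances of type $\bf c$, while acquaintances of type $\bf d$ do not occur there.

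The main obstacle I anticipate is getting the dihedral-angle bookkeeping exactly right: one must check in the $\widetilde A_3$ metric that the relevant dihedral angle at an edge labeled $k$ or $k'$ is indeed $\frac{\pi}{3}$ (so that a fan of two triangles at that edge, i.e.\ the arc of $\lk_X(e)$ through an endpoint of $e$, subtends $\frac{2\pi}{3}<\pi$, yielding local geodesicity of $cac'$), and to confirm that the combinatorial distance between acquaintances in $\lk_X(e)$ is exactly two, never more — which is precisely the content of Definition~\ref{def:friends}. Once that is pinned down, the convexity-of-stars machinery from Lemma~\ref{le:convexStar3} does all the remaining work, just as in the proof of Corollary~\ref{cor:friends}.
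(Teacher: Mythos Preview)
Your overall strategy---show that the geodesic $cc'$ meets the edge $ad$, then invoke convexity of stars (Lemma~\ref{le:convexStar3})---is exactly the paper's approach, and your second paragraph (uniqueness of $a,d$; nonexistence of a common neighbour of type $\bf b$ via the ``then they would be friends'' contradiction) is correct once that first step is in place.

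The gap is in the first step, where the angle bookkeeping goes wrong in a way that matters. You assert that acquaintances sit at combinatorial distance two in $\lk_X(ad)$; in fact this is precisely what \emph{cannot} happen. If $c,c'$ had a common neighbour $b$ of type $\bf b$ in $\lk_X(ad)$, then $a,b,c$ and $a,b,c'$ would each span triangles, so $c,c'\in\lk_X(ab)$ and they would be friends, contrary to hypothesis. Hence acquaintances are at combinatorial distance $\geq 4$ in $\lk_X(ad)$. Next, in the $\widetilde A_3$ metric the dihedral angle at an edge of type $\bf{ad}$ is $\frac{\pi}{3}$, independently of $k$; the exponent $k$ controls the girth $2k$ of the link graph, not the metric angle per edge. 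Combining these, the angular distance between $c$ and $c'$ in $\lk_X(ad)$ is $\geq 4\cdot\frac{\pi}{3}>\pi$, and \emph{this} is the inequality you need: it forces the geodesic $cc'$ to pass through the edge $ad$. It does not say that the edge-path $cac'$ is geodesic (and note that $a,d$, being the endpoints of $ad$, do not lie in $\lk_X(ad)$ at all, so your description of ``arcs through $a$ and $d$'' has no meaning). With the geodesic correctly located on $ad$, the rest of your argument goes through as written and matches the paper's proof.
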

\begin{proof}
For assertion (i), note that the angle between the triangles $adc$ and $adc'$ is $\geq 4\frac{\pi}{3}>\pi$. Thus the geodesic $cc'$ passes through $ad$. If there is another vertex $a'$ of type $\bf a$ adjacent to both $c$ and $c'$, then by Lemma~\ref{le:convexStar3} the geodesic $cc'$ is contained in $\st_X(a')$. Thus $a\in \st_X(a')$, which is a contradiction. Remaining assertions follow in the same way.
\end{proof}

We need the following classification of triples and quadruples of friends and acquaintances. The classification in cases~I and~II is similar, but the proof varies, so we split the statement according to cases~I and~II.

\begin{lemma}
\label{lem:cloopsI}
In case $\mathrm{I}$:
\begin{enumerate}[label={\upshape(\roman*)}]
\item
If $c,c',c''$ are pairwise friends or acquaintances of type $\bf c$, then there is an edge of type $\bf ad$ adjacent to all of $c,c',c''$. This edge is unique, unless $c,c',c''$ are friends adjacent to a common edge of type $\bf ab$.
\item
The same holds for a set of four vertices of type $\bf c$.
\item
If $d,d',d''$ are pairwise friends or acquaintances of type $\bf d$, then there is an edge of type $\bf bc$ adjacent to all of $d,d',d''$. This edge is unique, unless $d,d',d''$ are friends adjacent to a common edge of type $\bf ab$.
\item
The same holds for a set of four vertices of type $\bf d$.
\end{enumerate}
\end{lemma}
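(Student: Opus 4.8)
The plan is to reduce (iii) and (iv) to (i) and (ii), then (ii) to (i), and to prove (i) in three steps: first produce a single vertex of type $\bf a$ adjacent to all of $c,c',c''$, then pass to its link, and finally run a hexagon argument there. Case~I is invariant under the symmetry $\bf a\leftrightarrow\bf b$, $\bf c\leftrightarrow\bf d$ of the labelled tetrahedron (which swaps $k\leftrightarrow k'$, $m\leftrightarrow m'$): friends go to friends, acquaintances to acquaintances, and edges of type $\bf{ad}$ to edges of type $\bf{bc}$, so (iii), (iv) follow from (i), (ii). For (ii), apply (i) to each triple among the four vertices; the uniqueness of the common $\bf a$-vertex of a pair (Corollaries~\ref{cor:friends} and~\ref{cor:acquaint}) forces all these $\bf a$-vertices to coincide, and then one repeats Step~3 below inside the link of that common vertex with four $\bf c$-vertices in place of three.

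\emph{Step 1 (a common $\bf a$-vertex).} Let $c,c',c''$ be pairwise friends or acquaintances of type $\bf c$. For the pair $c,c'$ choose a witnessing edge $e_{c'}\subset\st_X(c)$, of type $\bf{ab}$ if $c,c'$ are friends and of type $\bf{ad}$ if they are acquaintances; by Definition~\ref{def:friends} (and Remark~\ref{rem:extra}) such an edge is adjacent to $c$. Similarly fix $e_{c''}$ for $c,c''$, and set $S=\st_X(c)\cup\st_X(e_{c'})\cup\st_X(e_{c''})$. Then $S$ is connected with connected vertex links and $c',c''\in S$. Granting that $S$ is convex, the geodesic $[c',c'']$ lies in $S$; by the angle computations in the proofs of Corollaries~\ref{cor:friends} and~\ref{cor:acquaint} this geodesic passes through an interior point of the witnessing edge $e$ of the pair $c',c''$, so $e\subset S$ and in particular the $\bf a$-vertex $a$ of $e$ lies in $S$. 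But every $\bf a$-vertex of $S$ is adjacent to $c$: in $\st_X(c)$ this is clear, while the $\bf a$-vertex of any tetrahedron of $\st_X(e_{c'})$ or $\st_X(e_{c''})$ is the $\bf a$-endpoint of $e_{c'}$, resp.\ $e_{c''}$, which is adjacent to $c$. Hence $a$ is adjacent to all of $c,c',c''$; by uniqueness of the common $\bf a$-vertex of a pair, all three witnessing edges share the $\bf a$-endpoint $a$.

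\emph{Steps 2--3 (passing to $\lk_X(a)$ and the hexagon).} The link $\lk_X(a)$ is the Coxeter realization of a rank $3$ building of type $(k',m',l)$, all exponents $\geq 3$, hence itself systolic; in it $c,c',c''$ are vertices of type $\bf c$, and each of the three pairs has a common neighbour of type $\bf d$ (the $\bf d$-endpoint of its $\bf{ad}$-witness when they are acquaintances, and a common $\bf{ad}$-edge through $a$ supplied by Remark~\ref{rem:extra} when they are friends). If the three have a common $\bf d$-neighbour $d$, then $ad$ is the required edge of type $\bf{ad}$. Otherwise pick common $\bf d$-neighbours $d_1,d_2,d_3$ of the pairs $c,c'$; $c',c''$; $c,c''$, pairwise distinct and distinct from the $\bf c$-vertices, so that $\gamma=c\,d_1\,c'\,d_2\,c''\,d_3$ is an embedded $6$-cycle in the systolic complex $\lk_X(a)$. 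A diagonal of $\gamma$ is forbidden by type except between some $d_i$ and the opposite $\bf c$-vertex, and such a diagonal exhibits a common $\bf d$-neighbour of $c,c',c''$, giving the edge of type $\bf{ad}$. If $\gamma$ is full it bounds a hexagonal disc, i.e.\ there is a vertex $u$ adjacent to all six vertices of $\gamma$; being adjacent to $c$ and $d_1$ it has $X$-type $\bf b$, so $au$ is an edge with $auc,auc',auc''$ triangles — a common edge of type $\bf{ab}$. By Corollary~\ref{cor:acquaint} no pair among $c,c',c''$ can then be acquaintances, so all three pairs are friends (the exceptional case), and an edge of type $\bf{ad}$ still exists by the complete bipartiteness of the link of $au$. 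Uniqueness follows the same way: if $ad_1\neq ad_2$ are both common $\bf{ad}$-edges then $d_1,d_2$ are common $\bf d$-neighbours of every pair, so by Corollary~\ref{cor:acquaint} no pair is acquaintances, all are friends, and we are in the exceptional case.

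\emph{Main obstacle.} The delicate point is the convexity of $S$ asserted in Step~1. By Remark~\ref{rem: strong->convex} it suffices to check strong local convexity (Definition~\ref{def:stronglyedge}) of $S$ at each of its edges. At edges disjoint from $e_{c'},e_{c''}$ and their incident faces this is exactly the computation in the proof of Lemma~\ref{le:convexStar3}; near the gluing loci one must, in the $\widetilde A_3$ metric, describe the outer fans of $S$ and verify the length bounds of Definition~\ref{def:stronglyedge}, including the strengthened bounds at edges of type $\bf{cd}$ and at edges labelled $m$ or $m'$ — this is where the hypothesis that there is at most one exponent $2$ and that the tetrahedral type is admissible enters. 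I expect this fan bookkeeping, rather than the hexagon argument, to be the bulk of the work.
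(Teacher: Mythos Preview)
There is a genuine gap in Steps~2--3. You assert that $\lk_X(a)$ is a rank~$3$ building of type $(k',m',l)$, all exponents $\geq 3$, hence systolic. This is a misreading of the labelling: the type of $\lk_X(a)$ is governed by the edges of $T$ \emph{incident} to~$\bf a$, not by the opposite face. Concretely, for any vertex $b$ of type~$\bf b$ in $\lk_X(a)$ one has $\lk_{\lk_X(a)}(b)=\lk_X(ab)$, and by Remark~\ref{rem:extra} this is a complete bipartite graph. Thus $\lk_X(a)$ is a building of type $(2,k,m)$ and is \emph{not} $6$--large: for any two friends $c,c'$ adjacent to an edge $ab$ and any two type-$\bf d$ vertices $d_1,d_2\in\lk_X(ab)$, the $4$-cycle $c\,d_1\,c'\,d_2$ is full in $\lk_X(a)$. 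Consequently Lemma~\ref{lem:length 6 loop} is unavailable, and your hexagon argument for producing a common $\bf d$-neighbour (or, in the exceptional case, a common vertex of type~$\bf b$) has no force. The uniqueness argument suffers the same defect: two distinct common $\bf{ad}$-edges give two common $\bf d$-neighbours of each pair, but this does not by itself place you in the exceptional case.

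The paper sidesteps this entirely by never descending to $\lk_X(a)$. Sublemma~\ref{sub:I} builds one convex $3$-dimensional neighbourhood $S$ of $c$ (the star of $c$ together with the stars of \emph{all} type-$\bf{ad}$ edges in $\lk_X(c)$) large enough that the geodesic $c'c''$ already determines both the common $\bf a$-vertex and the required $\bf d$-vertex inside~$S$; the amalgam description of $S$ then yields uniqueness. Your Step~1, with its smaller ad~hoc $S$, recovers only the $\bf a$-vertex, and the attempt to finish inside $\lk_X(a)$ fails for the reason above. If you want to salvage the link approach, note that it is $\lk_X(c)$ and $\lk_X(d)$ that are systolic (types $(l,k',m)$ and $(l,k,m')$ in case~I), not $\lk_X(a)$ or $\lk_X(b)$ --- but $c',c''$ do not lie in $\lk_X(c)$, which is exactly why the paper works with the thicker set~$S$.
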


To prove Lemma~\ref{lem:cloopsI}, we first establish the following.

\begin{sublem}
\label{sub:I}
In case $\mathrm{I}$, let $c$ be a vertex of type $\bf c$ in $X$. Let $S$ be the union of the star $\st_X(c)$ of $c$ in $X$ and the stars $\st_X(ad)$ of all the edges $ad$ of type $\bf{ad}$ in $\lk_X(c)$. Then $S$ is convex.
\end{sublem}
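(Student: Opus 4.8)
The plan is to imitate the proofs of Lemma~\ref{le:convexStar3} and Lemma~\ref{le:2kloops}, showing that $S$ is convex by checking that the inclusion $S\hookrightarrow X$ is strongly locally convex at every edge of $S$, and then invoking Remark~\ref{rem: strong->convex}. First I would observe that $S$ is connected with connected vertex links (each $\lk_X(ad)$ is glued along a path to $\lk_X(c)$, and all the edges $ad$ of type $\bf{ad}$ form a connected subcomplex of $\lk_X(c)$ because they are adjacent to the edge $ab$ of type $\bf{ab}$ as one varies the common vertex $a$), so Remark~\ref{rem: strong->convex} applies once local convexity is established. Then I would run through the edges $e$ of $S$ case by case according to their type in $T=\bf{abcd}$ and their incidence with $c$.

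The key computation is to bound the lengths of outer fans. For an edge $e$ incident to $c$, every tetrahedron of $S$ containing $e$ lies in $\st_X(c)$, so $\lk_S(e)$ already coincides with $\lk_X(e)$ or is a path graph there; the outer fans are the fans on the other side and have length $\geq 2i-2$ where $i$ is the label, which beats the required bounds exactly as in Lemma~\ref{le:convexStar3} (using $l\geq 3$, $m,m'\geq 6$, and the $\widetilde A_3$ angle estimate $i\geq n$). For an edge $e$ of type $\bf{ad}$ lying in some $\st_X(ad)\subset S$ but not incident to $c$: here $\lk_S(e)$ contains the whole $\lk_X(e)$ minus possibly nothing, so again $S$ is locally convex at $e$. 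The subtle case is an edge $e$ \emph{incident to $a$ or to $d$} (but not to $c$) that lies in several of the stars $\st_X(ad)$ — for instance an edge of type $\bf{ab}$, $\bf{bd}$, or $\bf{cd}$ emanating from $a$ or $d$. At such an edge $S$ need not contain the full link, and I must count how many consecutive tetrahedra of $S$ sit around $e$. This is exactly where the angle bookkeeping of Lemma~\ref{le:2kloops} reappears: at a type $\bf{cd}$ or type-$m'$ edge through $a$, the tetrahedra of $S$ contributing to a fan come in blocks governed by the $\frac{\pi}{3}$ dihedral angles, and one checks the accumulated angle of any outer fan is $<\pi$, i.e.\ its length meets the Definition~\ref{def:stronglyedge} threshold.

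The hard part will be precisely this last bookkeeping: organizing which edges of $S$ are incident to $a$ or $d$ without being incident to $c$, identifying for each such edge the maximal fans of tetrahedra of $S$ around it, and verifying the three inequalities of Definition~\ref{def:stronglyedge} in every subcase of case~I. I expect the cleanest route is to note that $S=\st_X(c)\cup\bigcup_{ad}\st_X(ad)$ and that the ``new'' tetrahedra beyond $\st_X(c)$ all contain an edge of type $\bf{ad}$; hence any edge $e\subset S$ not incident to $c$ lies in the convex set $\st_X(ad)$ for the relevant edge $ad$, and convexity of each $\st_X(ad)$ from Lemma~\ref{le:convexStar3}, combined with the observation that distinct stars $\st_X(ad),\st_X(a'd')$ meet along $\st_X(c)$-faces, forces the outer fans of $S$ at $e$ to be outer fans of a single $\st_X(ad)$, which are already long enough. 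Once strong local convexity holds at every edge, Remark~\ref{rem: strong->convex} gives that $S$ is convex, completing the proof of the sublemma.
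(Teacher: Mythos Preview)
Your overall strategy---verify strong local convexity at every edge of $S$ and invoke Remark~\ref{rem: strong->convex}---is exactly what the paper does. The gap is in your last paragraph, where the argument goes wrong.

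You claim that an edge $e\subset S$ not incident to $c$ lies in a single $\st_X(ad)$, so that the outer fans of $S$ at $e$ coincide with outer fans of that single edge-star. This is false. Take $e=bd$ of type $\bf{bd}$ with $bd\in\lk_X(c)$. Then $e\in\st_X(ad)$ for \emph{every} $a$ adjacent to $b$ with $ad\in\lk_X(c)$, and there are two such $a,a'$ on either side of $c$ in the cycle $\lk_X(bd)$. The fan of $S$ at $e$ is then the length--$4$ path $\tilde c\,a\,c\,a'\,\tilde c'$ in $\lk_X(bd)$, not a length--$2$ fan from a single star. The outer-fan bound becomes $2m'-4$, and for the third bullet of Definition~\ref{def:stronglyedge} you need $2m'-4\geq 7$, i.e.\ $m'\geq 6$---which is exactly where the case~I hypothesis enters. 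Likewise, your case analysis omits the edges $a\tilde c$ and $b\tilde c$ with $\tilde c\neq c$: these lie in $S$ via tetrahedra $ab\tilde c d\in\st_X(ad)$, and at $a\tilde c$ the fan in $S$ again has length~$4$ (from two different $d$'s), requiring $m\geq 6$.

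Your ``observation'' that distinct $\st_X(ad),\st_X(a'd')$ meet only along $\st_X(c)$--faces is also unjustified (and in any case irrelevant to the fan at $e$, which depends only on which tetrahedra of $S$ contain $e$). The paper avoids this issue by first building an \emph{abstract} amalgam $S_1$ with prescribed gluings---adding the triangles $ab\tilde c$ to $\st_X(c)$ to form $S_0$, then attaching each $\st_X(ad)$ along a specified subcomplex---and proving the map $S_1\to X$ is strongly locally convex; injectivity and $S=\varphi_1(S_1)$ are then \emph{consequences}, not assumptions. With the amalgam in hand, the fan lengths $f(e)$ at each edge type are read off directly, and the bound $2i(e)-f(e)\geq i(e)$ (with the sharper $\geq 8$ when $i(e)\in\{m,m'\}$) finishes the job.
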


\begin{proof}
Let $S_0$ be the simplicial complex obtained from $\st_X(c)$ by amalgamating along edges $ab$ of type $\bf ab$ in $\lk_X(c)$ all the triangles $ab\tilde{c}$ of type $\bf{abc}$ in $X$ with $\tilde{c}\neq c$. By Corollary~\ref{cor:friends}(i), the obvious map $\varphi_0\colon S_0\rightarrow X$ is an embedding.

By Corollaries~\ref{cor:acquaint}(i) and~\ref{cor:friends}(i), the star $\st_X(ad)$ of an edge $ad$ of type $\bf{ad}$ in $\lk_X(c)$ intersects $\varphi_0(S_0)$ along the union of $\st_X(adc)$ and triangles $ab\tilde{c}$ with $b$ adjacent to $ad$. Let $S_1$ be the simplicial complex obtained from $S_0$ by amalgamating with all $\st_X(ad)$ above along these sets of intersection. We denote the copy of $\st_X(ad)$ in $S_1$ by $S(ad)$.
We will prove that the obvious map $\varphi_1 \colon S_1\rightarrow X$ is strongly locally convex at each edge, and hence by Remark~\ref{rem: strong->convex} the map $\varphi_1$ is an embedding and $S=\varphi_1(S_1)$ is convex.

By the proof of Lemma~\ref{le:convexStar3}, it suffices to prove that $\varphi_1$ is strongly edge convex at every edge $e$ in~$S_0$. At an edge $e$ in $S_0$ of type $\bf{ab}$, by Remark~\ref{rem:extra} the map $\varphi_1$ is a local isomorphism. It is a local isomorphism as well at all $e$ incident to $c$ and all $e$ of type $\bf{ad}$.

For a different $e$ of type $I$, we compare the label~$i(e)$ on $I\subset \bf{abcd}$ with the length $f(e)$ of a longest fan at $e$ of tetrahedra in~$S_1$. An edge $e=bd$ in $S_0$ of type $\bf{bd}$ is contained in exactly these $S(ad)$ for which $a$ and $b$ are adjacent. Such $S(ad)$ paired with another $S(a'd)$ gives rise
to fans of length~$4$ at $e$. However, the label $i(e)=m'$ is~$\geq 6$. If $e$ is an edge of a triangle $ab\tilde{c}$ of type $\bf{abc}$ with $\tilde{c}\neq c$, then it is contained in exactly these $S(ad)$ for which $b$ and $d$ are adjacent. Here the longest fans are of length~$2$ at $e=b\tilde{c}$ and~$4$ at $e=a\tilde{c}$. However, at $\bf{ac}$ the label $i(e)=m$ is also $\geq 6$. Thus in all of the cases the outer fans of $\varphi_1\big(\st_{S_1}(e)\big)$ at $\varphi_1(e)$ have length $\geq 2i(e)-f(e)\geq i(e)$, or even $\geq 8$ for $i(e)=m,m'$, as desired.
\end{proof}

\begin{proof}[Proof of Lemma~\ref{lem:cloopsI}]
We begin by proving assertion (i).
By Sublemma~\ref{sub:I}, the geodesic $c'c''$ lies in $S$, whence the unique vertex $a$ from Corollary~\ref{cor:friends}(i) or~\ref{cor:acquaint}(i) adjacent to both $c',c''$ lies in $S$. All vertices of type $\bf a$ in $S$ lie in $\lk_X(c)$, thus $a$ is adjacent to $c$, as desired. If $c',c''$ are acquaintances, then we obtain the vertex $d$ adjacent to $a$ in the same way. Otherwise, if $c',c''$ are friends, then let $b$ be the unique vertex from Corollary~\ref{cor:friends}(i) adjacent to both $c',c''$. If $b$ is adjacent to $c$, then any vertex $d$ adjacent to $ab$ satisfies assertion (i) by Remark~\ref{rem:extra}. Otherwise, since $b$ lies in $S$, there is
a unique vertex $d$ in $\lk_X(c)$ such that $bad$ is a triangle (this uses the description of $S$ as the isomorphic image of $S_1$ from Sublemma~\ref{sub:I}). Such $d$ satisfies assertion~(i).

Assertion (iii) follows from the symmetry between the types $\bf {ad}$ and~$\bf {bc}$.

For assertion (ii), assume that there is a fourth vertex $c^*$ of type $\bf c$ that is a friend or acquaintance of all $c,c',c''$. The vertex $a$ is adjacent to $c^*$, as before. If any pair of the vertices of type $c$ are acquaintances, then the geodesic joining them passes through required $ad$. So now we assume that all the vertices of type $\bf c$ are pairwise friends. Let $d',d''$ be vertices of type $\bf d$ adjacent to the triples $c,c',c^*$ and $c,c'',c^*$ guaranteed by assertion (i). Denote the edges from Corollary~\ref{cor:friends}(i) for the pairs $cc^*,cc',cc''$ by $ab, ab', ab''$. By Corollary~\ref{cor:friends}(i), we have a loop $bd'b'db''d''b$. If some of the vertices $d,d',d''$ coincide, say $d=d'$, then $d$ satisfies the existence part of assertion (i).
If $d$ is not unique, then by the uniqueness part of assertion~(i) for triples $c,c',c^*$ and $c,c'',c^*$, the edge $ab$ is adjacent to both $c'$ and $c''$, as desired.
If the vertices $d,d',d''$ are distinct, then they satisfy the hypothesis of assertion~(iii). Hence $b=b'=b''$, so that all $c,c',c'',c^*$ are adjacent to $b$, whence to $d$ by Remark~\ref{rem:extra}.

Assertion (iv) follows from the symmetry between the types $\bf {ad}$ and~$\bf {bc}$.
\end{proof}

\begin{lemma}\label{lem:cloopsII}
In case $\mathrm{II}$:
\begin{enumerate}[label={\upshape(\roman*)}]
\item
If $c,c',c''$ are pairwise friends or acquaintances of type $\bf c$, then there is an edge of type $\bf ad$ or $\bf bd$ adjacent to all of $c,c',c''$. This edge is unique, unless $c,c',c''$ are friends adjacent to a common edge of type~$\bf ab$.
\item
The same holds for a set of four vertices of type $\bf c$.
\item
Sets of friends of type $\bf d$ are adjacent to a common edge of type $\bf ab$.
\end{enumerate}
\end{lemma}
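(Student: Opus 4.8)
The plan is to treat the three assertions separately, since (i) and (ii) essentially reproduce Lemma~\ref{lem:cloopsI} while (iii) is genuinely specific to case~II. For (i) and (ii) the first step is the case~II analogue of Sublemma~\ref{sub:I}: for a vertex $c$ of type $\mathbf c$, the union of $\st_X(c)$ with the stars $\st_X(e)$ of all edges $e$ of type $\mathbf{ad}$ or $\mathbf{bd}$ in $\lk_X(c)$ is convex. Granting this, for any triple or quadruple of pairwise friends or acquaintances of type $\mathbf c$ one produces a common edge of type $\mathbf{ad}$ or $\mathbf{bd}$ by pushing the geodesic joining two of the vertices into that convex set and reading off the required vertices $a$ (or $b$) and $d$ inside $\lk_X(c)$ via Corollaries~\ref{cor:friends}(i) and~\ref{cor:acquaint}(i),(iii); the only difference from case~I is that the common edge may now be of type $\mathbf{bd}$ rather than $\mathbf{ad}$.

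Part (iii) requires a different convexity sublemma: for a vertex $d$ of type $\mathbf d$, the union $S_d$ of $\st_X(d)$ with the stars $\st_X(ab)$ of all edges $ab$ of type $\mathbf{ab}$ in $\lk_X(d)$ is convex. As with Sublemma~\ref{sub:I}, the plan is to realize $S_d$ as an iterated amalgam --- starting from $\st_X(d)$ with the triangles of type $\mathbf{abd}$ sharing a common edge of type $\mathbf{ab}$ in $\lk_X(d)$ amalgamated in, so that the amalgam embeds in $X$ by Corollary~\ref{cor:friends}(ii), and then amalgamating in each $\st_X(ab)$ along its intersection with the part already built --- and then to verify that the amalgamation map to $X$ is strongly locally convex (Definition~\ref{def:stronglyedge}) at every edge, whence convex by Remark~\ref{rem: strong->convex}. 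At edges of type $\mathbf{ab}$ the map is a local isomorphism by Remark~\ref{rem:extra}, as it is at edges incident to $d$; at the remaining edges one bounds the longest fan of tetrahedra of $S_d$ through the edge and checks that the outer fans have the lengths required by the three items of Definition~\ref{def:stronglyedge}, using that the labels on the edges of $T$ are all $\geq 3$ and are $\geq 6$ on the edges of type $\mathbf{ac}$ and $\mathbf{bc}$. This edge-by-edge fan count, essentially the one carried out in Sublemma~\ref{sub:I}, is where the real work lies and is the step I expect to be the main obstacle.

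Granting the convexity of $S_d$, assertion (iii) is short. If $|D|\le 2$ the statement is the definition of friends, so assume $D$ consists of at least three pairwise friends of type $\mathbf d$; fix $d_0\in D$ and write $S:=S_{d_0}$. First note that the edges of type $\mathbf{ab}$ whose open edge meets $S$ are precisely those adjacent to $d_0$: by Remark~\ref{rem:extra} the link of an edge of type $\mathbf{ab}$ is a complete bipartite graph on vertices of types $\mathbf c$ and $\mathbf d$, so $\st_X(ab) = ab * \lk_X(ab)$ contains no edge of type $\mathbf{ab}$ besides $ab$ itself, while an edge of type $\mathbf{ab}$ lies in $\st_X(d_0)$ if and only if it is adjacent to $d_0$. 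Now let $d,d'$ be two distinct elements of $D\setminus\{d_0\}$. Each of them is a friend of $d_0$, hence adjacent to an edge of type $\mathbf{ab}$ that is adjacent to $d_0$, and so lies in $S$; moreover, since $d,d'$ are friends, the geodesic $[d,d']$ passes through the interior of the unique edge $e(d,d')$ of type $\mathbf{ab}$ witnessing their friendship --- this is the angle-$\pi$ argument from the proof of Corollary~\ref{cor:friends}, uniqueness being part~(ii) of that corollary. Convexity of $S$ forces $[d,d']\subset S$, so the open edge $e(d,d')$ lies in $S$, so $e(d,d')$ is adjacent to $d_0$; thus $e(d,d')$ is adjacent to all of $d_0,d,d'$. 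Finally fix $d_1\in D\setminus\{d_0\}$ and set $e:=e(d_0,d_1)$. For every $d_j\in D\setminus\{d_0,d_1\}$ the edge $e(d_1,d_j)$ is adjacent to both $d_0$ and $d_1$ by what we just showed, hence equals $e$ by the uniqueness in Corollary~\ref{cor:friends}(ii), so $d_j$ is adjacent to $e$; and $d_0,d_1$ are adjacent to $e$ by construction. Hence every element of $D$ is adjacent to the common edge $e$, which proves~(iii).
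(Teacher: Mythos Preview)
Your plan is largely on target and matches the paper's approach: the paper also proves a two-part Sublemma (Sublemma~\ref{sub:II}) giving exactly the two convex sets you describe, the big one around a type~$\mathbf c$ vertex for (i),(ii) and the set $S_d$ around a type~$\mathbf d$ vertex for (iii). Your detailed argument for (iii) is correct and is a careful unpacking of the paper's one-line derivation from Sublemma~\ref{sub:II}(ii).

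There is, however, a real gap in your sketch for (i). You write that one simply mimics the case~I proof, reading off both endpoints of the witness edge inside $\lk_X(c)$. In case~I this works because the only pieces attached to $\st_X(c)$ in Sublemma~\ref{sub:I} are stars $\st_X(ad)$ and lone triangles $ab\tilde c$, so every vertex of type $\mathbf a$ in $S$ lies in $\lk_X(c)$. In case~II the set $S$ of Sublemma~\ref{sub:II}(i) also contains the stars $\st_X(bd)$, and these contribute vertices of type $\mathbf a$ (namely any $a$ adjacent to $bd$) that need \emph{not} be adjacent to $c$; symmetrically for type $\mathbf b$ vertices coming from $\st_X(ad)$. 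So when $c',c''$ are acquaintances and their witness edge $xd$ has its interior in such a piece, you cannot conclude that $x\in\lk_X(c)$ by the case~I argument. What survives is that all vertices of type $\mathbf d$ in $S$ do lie in $\lk_X(c)$ (check the pieces: in $\st_X(ab)$ this uses the complete bipartite link of Remark~\ref{rem:extra}), so you do get the common $d$; but pinning down the common $x$ needs an additional step.

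The paper handles the all-acquaintances subcase of (i) differently: instead of the big $S$ it takes the \emph{smaller} set $\st_X(c)\cup\Delta(c'x'd')\cup\Delta(c''x''d'')$, where $x'd',x''d''$ are the witness edges for the pairs $(c,c'),(c,c'')$, checks directly via Remark~\ref{rem: strong->convex} that this is convex, and then observes that the geodesic $c'c''$ must cross both $x'd'$ and $x''d''$, forcing $x'd'=x''d''$. This gives both existence and uniqueness in one stroke. Your approach can be repaired along these lines; the point is just that the step you flagged as routine (``the only difference from case~I is that the common edge may now be of type $\mathbf{bd}$'') is where the genuine case~II work happens.
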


\begin{sublem}
\label{sub:II}
In case $\mathrm{II}$:
\begin{enumerate}[label={\upshape(\roman*)}]
\item
Let $c$ be a vertex of type $\bf c$ in $X$. Let $S$ be the union of the star $\st_X(c)$ of $c$ in $X$ and the stars $\st_X(ad),\st_X(bd)$ of all the edges of types $\bf{ad},\bf{bd}$ in $\lk_X(c)$. Then $S$ is convex.
\item
Let $d$ be a vertex of type $\bf d$ in $X$. Let $S'$ be the union of the star $\st_X(d)$ of $d$ in $X$ and the stars $\st_X(ab)$ of all the edges of types $\bf{ab}$ in $\lk_X(d)$. Then $S'$ is convex.
\end{enumerate}
\end{sublem}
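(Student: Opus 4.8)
The plan is to follow, case by case, the template of Sublemma~\ref{sub:I}: in each part we present the set whose convexity is claimed as the image of a combinatorially controlled amalgam under a simplicial map $\varphi$ to $X$, show that $\varphi$ is strongly locally convex at every edge, and conclude by Remark~\ref{rem: strong->convex}. The only genuinely new point compared with case~I is that in case~II the systolization adds edges at a vertex $c$ of type $\bf c$ along \emph{two} families of edges, those of types $\bf{ad}$ and $\bf{bd}$, so in part~(i) we must glue on stars of edges of both of these types; in part~(ii) nothing of the kind occurs, since case~II has no acquaintances of type $\bf d$.

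For assertion~(i) I would first build $S_0$ exactly as in Sublemma~\ref{sub:I}, amalgamating $\st_X(c)$ along each edge $ab$ of type $\bf{ab}$ in $\lk_X(c)$ with all triangles of type $\bf{abc}$, so that the tautological map $S_0\to X$ is an embedding by Corollary~\ref{cor:friends}(i). Then, using Corollaries~\ref{cor:friends}(i), \ref{cor:acquaint}(i) and~\ref{cor:acquaint}(iii) to pin down the pairwise intersections with $S_0$ and with each other, I would amalgamate $S_0$ with all the stars $\st_X(ad)$ and $\st_X(bd)$ of edges of types $\bf{ad},\bf{bd}$ in $\lk_X(c)$, obtaining a complex $S_1$ with a tautological map $\varphi\colon S_1\to X$. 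Remark~\ref{rem:extra} keeps this coherent: since $\lk_X(ab)$ is complete bipartite, $c$ is adjacent to every vertex of type $\bf d$ in $\lk_X(ab)$, and each tetrahedron on such an edge $ab$ therefore lies in $\st_X(c)$ or in one of the glued stars $\st_X(ad)$.

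The heart of the argument is the edge-by-edge verification of strong local convexity of $\varphi$. At an edge $e$ incident to $c$ one has $\st_X(e)\subseteq\st_X(c)\subseteq S_1$; at an edge $e$ of type $\bf{ad}$ or $\bf{bd}$ along which a star was glued one has $\st_X(e)\subseteq S_1$; and at an edge $e$ of type $\bf{ab}$ in $S_0$ the complete bipartiteness of $\lk_X(ab)$ forces $\lk_{S_1}(e)=\lk_X(e)$; in each of these situations $\varphi$ is a local isomorphism. Every remaining edge $e$ is of type $\bf{ac}$, $\bf{bc}$ or $\bf{cd}$ and lies in at least two of the amalgamated pieces (an edge lying in only one piece being handled by that piece's own strong local convexity, which for the glued stars is Lemma~\ref{le:convexStar3}). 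For such $e$, each glued star through $e$ contributes a fan of length at most~$2$, and the uniqueness statements of Corollaries~\ref{cor:friends} and~\ref{cor:acquaint}, together with the convexity of edge stars of Lemma~\ref{le:convexStar3}, bound the number of glued stars through $e$, and hence the length $f(e)$ of the longest fan of tetrahedra of $S_1$ at $e$; one expects $f(e)\le 4$ for $e$ of type $\bf{ac}$ or $\bf{bc}$ and $f(e)\le 3$ for $e$ of type $\bf{cd}$. Since $\lk_X(e)$ has girth $2i(e)$ for the label $i(e)$ of $e$, the outer fans of $\varphi\big(\st_{S_1}(e)\big)$ at $\varphi(e)$ then have length at least $2i(e)-f(e)\ge i(e)$, in fact at least $8$ when $i(e)\in\{m,m'\}$ and at least $3$ when $e$ is of type $\bf{cd}$, which is exactly what Definition~\ref{def:stronglyedge} demands. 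By Remark~\ref{rem: strong->convex}, $\varphi$ is an embedding with convex image, and this image is $S$. Assertion~(ii) runs along the same lines but is easier: having no acquaintances of type $\bf d$, one takes the amalgam $S_1'$ of $\st_X(d)$ with all stars $\st_X(ab)$ of edges of type $\bf{ab}$ in $\lk_X(d)$, controlling intersections by Corollary~\ref{cor:friends}(ii); then $\varphi'\colon S_1'\to X$ is a local isomorphism at edges incident to $d$ and at the glued $\bf{ab}$-edges, while at every other edge the glued stars contribute fans short enough (at most~$4$ at $\bf{ac},\bf{bc}$-edges of label $\ge 6$, at most~$3$ at $\bf{cd}$-edges of label $l\ge 3$, and correspondingly short at $\bf{ad},\bf{bd}$-edges of label $\ge 3$) that the same girth estimate $2i(e)-f(e)\ge i(e)$ finishes the check and Remark~\ref{rem: strong->convex} applies.

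I expect the main obstacle to be precisely the fan-length bookkeeping in part~(i). In Sublemma~\ref{sub:I} only one family of edge stars is attached, so every edge sits in at most two of the amalgamated pieces in a visible way; here, with stars of both $\bf{ad}$- and $\bf{bd}$-edges glued on, the delicate step is to rule out an edge of type $\bf{cd}$ — where the label $l$ can be as small as~$3$, so that $2i(e)-f(e)\ge 3$ leaves no slack — from lying in more than two glued stars, and to show that there $f(e)\le 3$. This is exactly where one must combine the convexity of edge stars (Lemma~\ref{le:convexStar3}) with the uniqueness of the vertices $a$, $b$, $d$ of the relevant types adjacent to a given pair of friends or acquaintances (Corollaries~\ref{cor:friends} and~\ref{cor:acquaint}); once this is settled, everything else, and in particular assertion~(ii), is routine.
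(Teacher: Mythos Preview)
Your proposal is correct and follows the paper's template closely; the one substantive difference is in how you set up $S_0$ in part~(i). You propose to build $S_0$ ``exactly as in Sublemma~\ref{sub:I}'', i.e.\ $\st_X(c)$ together with only the triangles $ab\tilde c$. The paper instead takes $S_0=\st_X(c)\cup\bigcup_{ab\in\lk_X(c)}\st_X(ab)$, gluing in the \emph{full} stars of the $\bf ab$-edges. The point of this enlargement is that in case~II one glues stars of both $\bf ad$- and $\bf bd$-edges, and these can meet outside $\st_X(c)$: whenever $a,b$ are adjacent, $\st_X(ad)\cap\st_X(bd)\supset\st_X(abd)$. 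With the paper's $S_0$ these overlaps are already absorbed into $S_0$, so one can amalgamate each $\st_X(xd)$ to $S_0$ separately without worrying about the mutual intersections you flag with the phrase ``with each other''; the intersection $\st_X(xd)\cap S_0$ is then simply a union of tetrahedra $ab\tilde c d$. With your smaller $S_0$ the amalgam is still correct and yields the same $S_1\subset X$, but you must control those pairwise overlaps directly, which you acknowledge without carrying out. Either way the fan estimates at the critical edges $a\tilde c$, $b\tilde c$, $\tilde c d$ come out the same (lengths $\le 4,4,3$ against labels $m,m',l$), so your expected bounds are right.

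In part~(ii) your outline is a bit more elaborate than needed. The paper observes that every edge of $\st_X(d)$ is either incident to $d$ or of type $\bf ab$ (hence interior to $S'$), or of type $\bf ac$ or $\bf bc$; edges in $\st_X(ab)\setminus\st_X(d)$ lie in a single glued piece by Corollary~\ref{cor:friends}(ii) and are handled by Lemma~\ref{le:convexStar3}. So only the $\bf ac$- and $\bf bc$-edges require the $2i(e)-f(e)$ estimate, with $f(e)\le 4$ and $i(e)\in\{m,m'\}\ge 6$. Your additional checks at $\bf cd$-, $\bf ad$-, $\bf bd$-edges are harmless but unnecessary.
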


\begin{proof}
We begin by proving assertion (i). Let $S_0$ be the simplicial complex obtained from $\st_X(c)$ by amalgamating along $\st_X(abc)$ all $\st_X(ab)$ with $ab$ an edge of type $\bf ab$ in $\lk_X(c)$. By Corollary~\ref{cor:friends}(i), the obvious map $\varphi_0\colon S_0\rightarrow X$ is an embedding.

By Corollaries~\ref{cor:acquaint}(i) and~\ref{cor:friends}(i), the star $\st_X(ad)$ of an edge $ad$ of type $\bf{ad}$ in $\lk_X(c)$ intersects $\varphi_0(S_0)$ along the union of tetrahedra $ab\tilde{c}d$ with $b$ adjacent to $ad$. Analogously, the star $\st_X(bd)$ of an edge $bd$ of type $\bf{bd}$ in $\lk_X(c)$ intersects $\varphi_0(S_0)$ along the union of tetrahedra $ab\tilde{c}d$ with $a$ adjacent to $bd$.
Let $S_1$ be the simplicial complex obtained from $S_0$ by amalgamating with all $\st_X(xd)$ above along these sets of intersection.
We denote the copy of $\st_X(xd)$ in $S_1$ by $S(xd)$. It suffices to prove that the obvious map $\varphi_1 \colon S_1\rightarrow X$ is strongly locally convex at each edge.

By the proof of Lemma~\ref{le:convexStar3}, it suffices to prove that $\varphi_1$ is strongly locally convex at every edge $e$ in~$S_0$. At an edge $e$ in $S_0$ not incident to a $\tilde{c}\neq c$ of type $\bf c$, the map $\varphi_1$ is a local isomorphism. For a different $e$, incident to $\tilde{c}\neq c$, in a tetrahedron $ab\tilde{c}d$, we
proceed as in the proof of Sublemma~\ref{sub:I}, comparing $f(e)$ with $i(e)$. There are in $S_1$ fans of length~$4$ at $e=a\tilde{c}$ (respectively $e=b\tilde{c}$) coming from $S(ad), S(ad')$ (respectively $S(bd), S(bd')$), but in that case $i(e)\in\{m,m'\}$. There are fans of length $3$ at $e=\tilde{c}d$ coming from $S(ad),S(bd),$ but in that case $i(e)\geq 3$. Thus we can conclude using the same estimate as in the proof of Sublemma~\ref{sub:I}. This finishes the proof of assertion (i).

For assertion (ii), note that by Corollary~\ref{cor:friends}(ii), the subcomplex $S'$ is an amalgam of $\st_X(d)$ along $\st_X(abd)$ with all $\st_X(ab)$ for $ab$ an edge of type $\bf ab$ in $\lk_X(d)$. We prove that $S'\subset S$ is strongly locally convex by examining the edges $e$ in $\st_X(d)$. If $e$ is incident to $d$ or of type $\bf ab$, then it lies in the interior of $S'$ and there is nothing to prove. If $e$ is of type $\bf ac$ or $\bf bc$, then there are in $S'$ fans of length $4$ at $e$, but then $i(e)\in\{m,m'\}$, and we conclude as before.
\end{proof}

\begin{proof}[Proof of Lemma~\ref{lem:cloopsII}]
We begin by proving assertion (i). The geodesic $c'c''$ lies in $S$ by Sublemma~\ref{sub:II}(i). Thus if $c',c''$ are friends, then the edge $ab$ from Corollary~\ref{cor:friends}(i) lies in $S$. If $ab$ is adjacent to $c$, then there is nothing to prove. Otherwise, there is a unique triangle $bad$ with $ad$ or $bd$ in $\lk_X(c)$, as desired. By symmetry, we can now assume that all $c,c',c''$ are pairwise acquaintances. Let $x'd', x''d''$ be the edges from Corollary~\ref{cor:acquaint}(i,iii)
applied to the pairs $c,c'$ and $c, c''$, where $x',x''$ are of types $\bf a$ or $\bf b$. We then inspect using Remark~\ref{rem: strong->convex} that the union of $\st_X(c)$ with the triangles $c'x'd', c''x''d''$ is convex. Hence the geodesic $c'c''$ passes through both $x'd'$ and $x''d''$. Thus $d'=d'', x'=x''$, as desired.

For assertion (iii), if we consider friends $d,d',d'',\ldots $ of type $\bf d$, then the edge $ab$ from Corollary~\ref{cor:friends}(ii) lies in $\lk_X(d'')$ by Sublemma~\ref{sub:II}(ii).

For assertion (ii), assume that there is a fourth vertex  $c^*$ of type $\bf c$ that is a friend or acquaintance of all $c,c',c''$. If any pair of the vertices of type $c$ are acquaintances, then the geodesic joining them passes through required $xd$. If all the vertices of type $c$ are pairwise friends, then denote the edges from Corollary~\ref{cor:friends}(i) for the pairs $cc^*,cc',cc''$ by $ab, a'b', a''b''$. By assertion~(i) applied to $c,c',c''$, we have $a'=a''$ or $b'=b''$. Similarly, $a=a'$ or $b=b'$, and $a=a''$ or $b=b''$. Thus $a=a'=a''$ or $b=b'=b''$, say that the former equality holds. We consider the vertices $d,d',d''$ of type $\bf d$ for the triples $c,c',c''; c,c',c^*$ and $c,c'',c^*$ guaranteed by assertion (i).
By Corollary~\ref{cor:friends}(i), we have a loop $bd'b'db''d''b$. If some of $d,d',d''$ coincide, then we conclude as in the proof of Lemma~\ref{lem:cloopsI}. If they are distinct, then by assertion (iii) we have $b=b'=b''$. By Remark~\ref{rem:extra} any $d$ adjacent to $ab$ satisfies the lemma.
\end{proof}

To prove Theorem~\ref{thm:dim3partCoxeter} we need to show that $\sys X$ is simply-connected and that all its vertex links are $6$--large. To prove that vertex links are $6$--large it suffices to prove that they are systolic. Equivalently, they are simply-connected and edge links of $\sys X$ are $6$--large. We first prove simple-connectedness of $\sys X$ and its vertex links and then embark on proving  $6$--largeness of the edge links in Section~\ref{sec:edge}.

\begin{lemma}
\label{lem:simplyconvertex}
The complex $\sys X$ and its vertex links are simply-connected.
\end{lemma}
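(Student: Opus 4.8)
The plan is to treat the two assertions separately. For simple-connectedness of $\sys X$ itself, I would argue exactly as in Lemma~\ref{lem:simplycon}: the fundamental group is carried by $\sys X^{(1)}$, so it suffices to homotope every loop in $\sys X^{(1)}$ into $X^{(1)}$. Every new edge $e$ of $\sys X$ joins a pair of friends or acquaintances, and by Definition~\ref{def:friends} such a pair is adjacent to a common edge of type $\bf{ab}$ (for friends) or of type labeled by $k$ or $k'$ (for acquaintances); in either case $e$ together with the length-two path through that common vertex bounds a triangle in $\sys X$. Hence $e$ is homotopic rel endpoints to an edge-path in $X^{(1)}$, so every loop in $\sys X^{(1)}$ is homotopic to one in $X^{(1)}$, and $X$ is simply-connected.

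For the vertex links, I would go through the vertex types $\bf a, \bf b, \bf c, \bf d$ in turn. The link $\sys X_a$ of a vertex $a$ of type $\bf a$ is obtained from the rank~$3$ link $X_a$ by adding exactly the new edges of Construction~\ref{def:sys2} for the induced triangle type (plus possibly edges between type $\bf m$ vertices, as noted after Construction~\ref{def:sys3}), and no new vertices, since friends/acquaintances of vertices already adjacent to $a$ are again adjacent to $a$ by Corollaries~\ref{cor:friends} and~\ref{cor:acquaint}; so any loop in $\sys X_a^{(1)}$ pushes into $X_a^{(1)}$ as above, and $X_a$ is simply-connected because $X$ is a building. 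The same reasoning applies to $\sys X_b$. The interesting links are $\sys X_c$ and $\sys X_d$, where new vertices appear: for a vertex $c$ of type $\bf c$, a friend or acquaintance $\tilde c$ of $c$ contributes a new vertex of $\sys X_c$ joined (by Corollaries~\ref{cor:friends},~\ref{cor:acquaint}, or Lemma~\ref{lem:cloopsI}/\ref{lem:cloopsII}) to the star, inside $X_c$, of the common edge $ab$, $ad$, or $bd$. I would show that $\sys X_c$ deformation retracts onto $X_c$ by collapsing each such new vertex onto that edge-star, which is contractible; concretely, one checks that the map $f$ sending each new vertex to a chosen vertex of the corresponding edge, and fixing $X_c$, realizes $\sys X_c$ as obtained from $X_c$ by attaching cones and triangles, hence is a homotopy equivalence. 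Since $X_c$ is simply-connected, so is $\sys X_c$, and symmetrically for $\sys X_d$.

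The main obstacle is bookkeeping in the link of a $\bf c$-vertex (and of a $\bf d$-vertex in case~I), where a single new vertex may be a friend or acquaintance via several different common edges, and in case~I the acquaintance relation and the friend relation both occur. Here Lemma~\ref{lem:cloopsI}(i),(iii) and Lemma~\ref{lem:cloopsII}(i),(iii) are exactly what is needed: they guarantee that any family of mutually friends/acquaintances in $\lk_X(c)$ shares a single edge of the relevant type (unique away from the $\bf{ab}$-case), so the new vertices cone off well-defined contractible subcomplexes of $X_c$ and the attaching data is consistent. Once that structure is identified, simple-connectedness follows formally; the one point requiring a little care is to phrase the collapse as repeated application of van Kampen (or of Lemma~\ref{le:collaps}, which also records that $f$ is a homotopy equivalence) so that no genuinely new $1$-cycles survive.
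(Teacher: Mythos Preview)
Your plan for $\sys X$ itself and for the links of type $\bf a$ and $\bf b$ vertices matches the paper's proof exactly. For the links of type $\bf c$ (and $\bf d$) vertices, the paper takes a slightly more direct route than your proposed deformation retraction: rather than constructing a global map $f\colon \sys X_c\to X_c$ and verifying the hypothesis of Lemma~\ref{le:collaps}, it works purely at the level of edge-paths. An edge in $\lk_{\sys X}(c)$ between two type $\bf c$ vertices is homotoped, via Lemmas~\ref{lem:cloopsI}(i) and~\ref{lem:cloopsII}(i), to a length-$2$ path through a vertex of type $\bf a$ (or $\bf b$ in case~II); and a length-$2$ path whose only type $\bf c$ vertex is the middle one is homotoped, via Corollaries~\ref{cor:friends}(i) and~\ref{cor:acquaint}(i,iii), to a length-$2$ path with middle vertex of type $\bf a$ (or $\bf b$). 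After finitely many such local moves every loop lies in $\lk_X(c)$.

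This sidesteps the bookkeeping you correctly anticipate. In particular, the link of a new vertex $c'$ inside $\lk_{\sys X}(c)$ contains other new type $\bf c$ vertices, so $c'$ is not simply coned over a subcomplex of $X_c$; a one-shot collapse via Lemma~\ref{le:collaps} therefore does not apply directly, and a well-defined retraction $f$ would require choosing target vertices compatibly across overlapping friend/acquaintance triples. Your fallback of phrasing the collapse as repeated applications of van Kampen is exactly what the paper's edge-path rewriting accomplishes, so the two arguments ultimately coincide---the paper's formulation just reaches the conclusion without ever naming the map $f$.
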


\begin{proof}
All the edges that we add to the $1$--skeleton of $X$ to obtain the $1$--skeleton of $\sys X$ connect vertices at distance $2$ in $X$. Hence $\sys X$ is simply-connected by the same argument as in the proof of Lemma~\ref{lem:simplycon}: all loops can be homotoped into $X$ that is simply-connected.
Similarly, a link in $\sys X$ of a vertex of type $\bf {a}$ or $\bf b$ is obtained from its link in $X$ by adding edges between some vertices at distance $2$, by Corollaries~\ref{cor:friends} and~\ref{cor:acquaint}. The link $\lk_{\sys X}(c)$ of a vertex $c$ of type $\bf {c}$ is obtained from $\lk_{X}(c)$ in the following way. We add edges as before between some vertices at distance $2$. But we also add vertices of type $\bf c$ and some incident edges. However, by Lemmas~\ref{lem:cloopsI}(i) and~\ref{lem:cloopsII}(i), an edge between type $\bf c$ vertices is homotopic in $\lk_{\sys X}(c)$ to an edge-path of length $2$ through a vertex of type $\bf{a}$ (or possibly $\bf b$ in case II). Moreover, each edge-path of length two in $\lk_{\sys X}(c)$ whose only vertex of type $\bf c$ is the middle vertex, is homotopic by Corollaries~\ref{cor:friends}(i) and~\ref{cor:acquaint}(i,iii) to an edge-path of length $2$ with the middle vertex replaced by a vertex of type $\bf{a}$ (or possibly $\bf b$). Hence any loop in $\
lk_{\sys{X}}(c)$ can be homotoped into $\lk_{X}(c)$, which is simply-connected. Analogously, the link of a vertex of type $\bf {d}$ in $\sys X$ is simply-connected as well.
\end{proof}

\section{Edge links}
\label{sec:edge}

In this section we will prove that the links of edges of types $\bf{ab, ac, ad}$ and $\bf{cd}$ in $\sys X$ as well as links of edges of friends and acquaintances are $6$--large. Using symmetries of $T$ it will follow that the links of edges of types $\bf{bd}$ and $\bf{bc}$ are also $6$--large.
This will complete the proof of Theorem~\ref{thm:dim3partCoxeter}.

\begin{prop}
\label{lem:linkab}
The link of an edge of type $\bf{ab}$ is a simplex.
\end{prop}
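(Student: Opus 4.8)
The plan is to analyze the link $\lk_{\sys X}(ab)$ of an edge $ab$ of type $\bf{ab}$ directly from Construction~\ref{def:sys3}. First I would recall that in $X$ the link $\lk_X(ab)$ is a complete bipartite graph, with one part consisting of vertices of type $\bf c$ and the other of vertices of type $\bf d$; this is the standard fact that the link of a codimension-$2$ simplex in a building is the incidence graph of a generalized $n$-gon, here a generalized $2$-gon since the exponent labeling $\bf{ab}$ is $2$. So as a simplicial complex, $\lk_X(ab)$ is already the complete bipartite graph $K_{p,q}$ on the $\bf c$-vertices and $\bf d$-vertices adjacent to $ab$; it is not yet a simplex because edges inside each part are missing.

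Next I would observe that every pair of $\bf c$-vertices adjacent to $ab$ are friends (Definition~\ref{def:friends}), being two vertices of type $\bf c$ adjacent to the common edge $ab$ of type $\bf{ab}$; likewise every pair of $\bf d$-vertices adjacent to $ab$ are friends. Hence, by Construction~\ref{def:sys3}, in $\sys X$ all these pairs become adjacent. Therefore in $\lk_{\sys X}(ab)$ the $\bf c$-vertices form a clique and the $\bf d$-vertices form a clique, and together with the complete bipartite edges already present in $X$, \emph{every} pair of vertices of $\lk_{\sys X}(ab)$ is joined by an edge. Since $\sys X$ is a flag complex by construction, this clique spans a simplex, so $\lk_{\sys X}(ab)$ is a simplex.

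The only point requiring a little care — and the one I expect to be the main (minor) obstacle — is making sure no \emph{new vertices} appear in $\lk_{\sys X}(ab)$ beyond those of $\lk_X(ab)$, i.e. that the systolization does not create a vertex adjacent to both $a$ and $b$ that was not already in $X$. But $\sys X$ and $X$ have the same vertex set by Construction~\ref{def:sys3}, so a vertex lies in $\lk_{\sys X}(ab)$ precisely when it is $\sys X$-adjacent to both $a$ and $b$; since $a,b$ are of types $\bf a,\bf b$ and all new edges join vertices of type $\bf c$ or $\bf d$ to other such vertices, no new edges are incident to $a$ or to $b$, so a vertex adjacent to $a$ (resp.\ $b$) in $\sys X$ is already adjacent to it in $X$. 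Thus $\lk_{\sys X}(ab)$ has exactly the vertex set of $\lk_X(ab)$, and the argument above applies. This completes the proof.
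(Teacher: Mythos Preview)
Your proof is correct and follows essentially the same approach as the paper: identify $\lk_X(ab)$ as a complete bipartite graph on the $\bf c$- and $\bf d$-vertices (the paper cites Remark~\ref{rem:extra} for this, where you invoke generalized $2$-gons directly), observe that all $\bf c$-vertices are pairwise friends and likewise for $\bf d$-vertices, and conclude that the link is a clique, hence a simplex by flagness. Your additional paragraph verifying that no new vertices appear in $\lk_{\sys X}(ab)$ is a point the paper states without justification, so your version is if anything more complete.
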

\begin{proof}
By Remark~\ref{rem:extra}, the link of an edge $ab$ of type $\bf{ab}$ in $X$ is a complete bipartite graph on vertices of types $\bf c$ and $\bf d$.
The link $\lk_{\sys X}(ab)$ of $ab$ in $\sys X$ has the same set of vertices. All the vertices of type $\bf c$ in $\lk_{\sys X}({ab})$ are friends, and the same holds for all the vertices of type $\bf d$.
\end{proof}

\begin{prop}
The link of an edge of type $\bf{ad}$ is $\infty$--large.
\end{prop}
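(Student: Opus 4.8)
The plan is to describe $\lk_{\sys X}(ad)$ explicitly and then identify it with (a systolized version of) a rank $2$ link using one of the collapsing criteria from Section~\ref{Sec_prelim}. First I would recall that the link of an edge $ad$ of type $\bf{ad}$ in $X$ is a bipartite graph on vertices of types $\bf b$ and $\bf c$; in fact, since the exponent on $\bf{ad}$ is $k$ and the building has thickness, $\lk_X(ad)$ is a generalized $k$--gon, of girth $2k$. The new edges of $\sys X$ incident to $ad$ come precisely from acquaintances of type $\bf c$ adjacent to the common edge $ad$ (Construction~\ref{def:sys3}), plus, in case~II, friends of type $\bf c$ adjacent to a common edge $\bf{bd}$ that happen to also lie in $\st_X(ad)$, and in both cases no new vertices are added to the link (no type $\bf c$ or $\bf d$ vertex is created inside $\lk_{\sys X}(ad)$ because the extra vertices of $\sys X$ appear only in links of vertices, not of edges). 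So $\lk_{\sys X}(ad)$ has the same vertex set as $\lk_X(ad)$, with some diagonals added among the type $\bf c$ vertices.

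Next I would analyze which pairs of type $\bf c$ vertices become adjacent. By Corollary~\ref{cor:acquaint}(i), two acquaintances $c,c'$ of type $\bf c$ sharing the edge $ad$ determine $a,d$ uniquely and have no common vertex of type $\bf b$; in the link $\lk_{\sys X}(ad)$ this says exactly that such $c,c'$ are at distance $2$ through a type $\bf b$ vertex in $\lk_X(ad)$ and we are coning off that length--$2$ path. I would then check (in case~I directly from Corollary~\ref{cor:acquaint}, and in case~II also using the friends of type $\bf c$ across $\bf{bd}$, cf. Construction~\ref{def:sys3}) that the type $\bf c$ vertices adjacent to a fixed type $\bf b$ vertex of $\lk_X(ad)$ become pairwise adjacent in $\sys X$, while no type $\bf b$--$\bf c$ adjacencies are added and no type $\bf b$--$\bf b$ adjacencies are added. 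Thus $\lk_{\sys X}(ad)$ is obtained from the bipartite graph $\Gamma = \lk_X(ad)$ by adding, for every type $\bf b$ vertex, a clique on its neighbors. This is precisely the passage from $\Gamma$ to $\Gamma^*$ in Definition~\ref{def:gamma*}, where the maximal cliques of $\Gamma^*$ restricted to... — more carefully, it is the passage described by Lemma~\ref{le:gamma*} applied with $\Gamma$ replaced by the ``star graph'' on the type $\bf c$ vertices whose maximal cliques correspond to type $\bf b$ vertices. (Equivalently, one applies Lemma~\ref{le:collaps} to the retraction $\lk_{\sys X}(ad)\to \lk_X(ad)$ collapsing each type $\bf b$ vertex's cone; the hypothesis of Lemma~\ref{le:collaps} is that two vertices are adjacent in the source iff their images are adjacent or equal, which is exactly what the previous paragraph verifies.)

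Then I would conclude: since $\lk_X(ad)$ is a generalized $k$--gon with $k\geq 3$, it has girth $2k \geq 6$, hence it is $6$--large (indeed $\infty$--large if one only cares about finite full cycles, but at least it has no full cycle of length $<2k$). Applying Lemma~\ref{le:collaps} (or Lemma~\ref{le:gamma*}) transfers $6$--largeness, and in fact $2k$--largeness, from $\lk_X(ad)$ to $\lk_{\sys X}(ad)$. Since $2k \geq 6 > $ any finite bound we need — and since a bipartite incidence graph of a generalized polygon has no cycles at all that are ``short'' beyond its girth, making the flag complex $\infty$--large once there are no length--$4$ or length--$5$ full cycles — we get that $\lk_{\sys X}(ad)$ is $\infty$--large, as claimed. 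I expect the main obstacle to be bookkeeping the exact set of added edges in case~II, where friends of type $\bf c$ across an edge of type $\bf{bd}$ also contribute: one must check that these do not create length--$4$ or length--$5$ full cycles, which follows because by Corollary~\ref{cor:acquaint}(iii) (and Lemma~\ref{lem:cloopsII}) such friends are still governed by a unique vertex of type $\bf b$ or $\bf d$, so the collapsing map is still well defined and still satisfies the adjacency hypothesis of Lemma~\ref{le:collaps}.
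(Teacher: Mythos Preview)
Your description of $\lk_{\sys X}(ad)$ is wrong in two essential ways, and the proof does not go through.

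First, new vertices \emph{do} appear in $\lk_{\sys X}(ad)$. Any friend $d'$ of $d$ lies in $\lk_{\sys X}(ad)$: by Corollary~\ref{cor:friends}(ii) the vertex $a$ is the unique type~$\bf a$ vertex adjacent to both $d$ and $d'$, so $d'$ is adjacent to $a$ in $X$ and to $d$ in $\sys X$, hence to the edge $ad$ in $\sys X$. (Acquaintances of $d$ are ruled out by Corollary~\ref{cor:acquaint}(ii), which says they are not adjacent to any type~$\bf a$ vertex.) So the link acquires a whole family of type~$\bf d$ vertices that you have not accounted for.

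Second, and more seriously, your description of the new edges among type~$\bf c$ vertices is incorrect. Every pair of type~$\bf c$ vertices in $\lk_X(ad)$ is adjacent to the \emph{common} edge $ad$, so by Construction~\ref{def:sys3} they are all friends or acquaintances and hence \emph{all} pairwise adjacent in $\sys X$. You claim instead that $c,c'$ become adjacent only when they share a type~$\bf b$ neighbor; note that your own citation of Corollary~\ref{cor:acquaint}(i) says acquaintances have \emph{no} common type~$\bf b$ vertex, which already contradicts the sentence that follows. With the correct picture, the type~$\bf c$ vertices span a single simplex, and each type~$\bf b$ vertex cones off a face of it.

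Because of this, your transfer via Lemma~\ref{le:collaps} or Lemma~\ref{le:gamma*} does not model the actual link, and even if it did, the conclusion would only be $2k$--largeness: a generalized $k$--gon has full cycles of length exactly $2k$, so the flag complex on it is not $\infty$--large. The paper's argument instead collapses the type~$\bf d$ vertices onto their associated type~$\bf b$ vertices via Lemma~\ref{le:collaps}, and then observes that the remaining complex $B$ is a simplex (on the type~$\bf c$ vertices) with cones attached along faces, so $\infty$--largeness follows from repeated use of Lemma~\ref{le:amalgam}.
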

\begin{proof}
The link $\lk_X(ad)$ of an edge $ad$ of type $\bf{ad}$ is a
bipartite graph of girth $2k$ on vertices of types $\bf b$ and
$\bf c$. We will now describe how one obtains $\lk_{\sys
X}(ad)$ from $\lk_X(ad)$. By Construction~\ref{def:sys3}, all
the vertices of type $\bf c$ become connected by an edge. Each
vertex $d'$ of type $\bf d$ that appears in $\lk_{\sys X}(ad)$
is a friend of $d$, by Corollary~\ref{cor:acquaint}(ii). By
Corollary~\ref{cor:friends}(ii), there is a unique vertex
$b(d')$ of type $\bf b$ in $\lk_{\sys X}(ad)$ adjacent to $d'$.
Moreover, the vertices of type $\bf c$ in $\lk_{\sys X}(ad)$
that are adjacent to $d'$ are exactly the ones adjacent to
$b(d')$. Finally, by Lemmas~\ref{lem:cloopsI}(iii)
and~\ref{lem:cloopsII}(iii), vertices $d',d''$ in $\lk_{\sys
X}(ad)$ are adjacent if and only if $b(d')=b(d'')$. See
Figure~\ref{fig_ad}.

\begin{figure}[h]
\begin{center}
    	\resizebox{!}{0.25\textheight}{\input{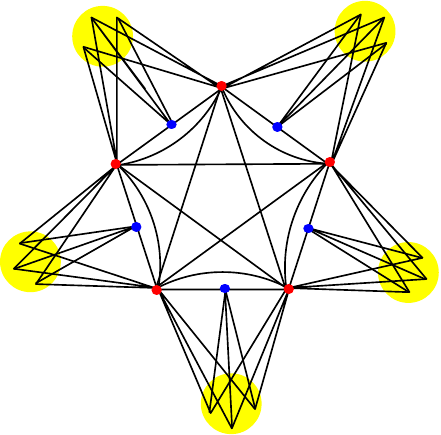_t}}
\caption[ad-Link]{$\lk_{\sys X}(ad)$: yellow areas mark cliques of vertices $d'$ with common $b(d')$. }
\label{fig_ad}
\end{center}
\end{figure}

Let $B\subset \lk_{\sys X}(ad)$ be the subcomplex spanned by the vertices of type $\bf b$ and $\bf c$. Then the retraction $f\colon\lk_{\sys X}(ad)\rightarrow B$ defined by $f(d')=b(d')$ satisfies the hypothesis of Lemma~\ref{le:collaps}. Hence it suffices to prove that $B$ is $\infty$--large.
Observe that $B$ is glued out of simplices as follows. There is one base-simplex spanned by all the vertices of type $\bf c$ in $B$. Each
vertex of type $\bf b$ cones off a subsimplex of that base-simplex. Therefore by repeated application of Lemma~\ref{le:amalgam}, the complex $B$ is $\infty$--large.
\end{proof}

\begin{prop}
The link of an edge of friends is $\infty$--large.
\end{prop}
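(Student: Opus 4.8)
The plan is to describe the link $L:=\lk_{\sys X}(cc')$ of an edge $cc'$ of friends explicitly and to present it as a union of simplices glued successively along common faces, so that iterated application of Lemma~\ref{le:amalgam} gives that $L$ is $\infty$--large. I first treat $c,c'$ of type $\bf c$ in Case~I.

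By Corollary~\ref{cor:friends}(i) there is a unique edge $ab$ of type $\bf ab$ with $c,c'$ adjacent to both $a$ and $b$, and every vertex of type $\bf d$ adjacent to both $c,c'$ is adjacent to $ab$. Using that $\lk_X(ab)$ is complete bipartite (Remark~\ref{rem:extra}), the vertex set of $L$ is then $\{a,b\}\cup D\cup C$, where $D$ is the set of vertices of type $\bf d$ adjacent to $ab$ and $C$ the set of vertices $c''$ of type $\bf c$ adjacent in $\sys X$ to both $c$ and $c'$, i.e.\ friends or acquaintances of both. For $c''\in C$ the triple $c,c',c''$ consists of pairwise friends or acquaintances, so by Lemma~\ref{lem:cloopsI}(i) some edge of type $\bf ad$ is adjacent to all three; its vertex of type $\bf a$ is adjacent to $c,c'$, hence equals $a$, so \emph{every} $c''\in C$ is adjacent to $a$. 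Let $C_0\subseteq C$ consist of those $c''$ adjacent also to $b$ (equivalently, to the edge $ab$); such $c''$ are adjacent to all of $\lk_X(ab)$. For $c''\in C\setminus C_0$ the ``unless'' clause of Lemma~\ref{lem:cloopsI}(i) does not apply, so there is a \emph{unique} edge $a\,d(c'')$ of type $\bf ad$ adjacent to $c,c',c''$, with $d(c'')\in D$; this forces $d(c'')$ to be the only vertex of type $\bf d$ adjacent to all of $c,c',c''$, whence $c''$ is adjacent to no vertex of $D$ other than $d(c'')$. Applying Lemma~\ref{lem:cloopsI}(ii) to quadruples of vertices of type $\bf c$ gives likewise that $c'',c'''\in C\setminus C_0$ are adjacent only if $d(c'')=d(c''')$.

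Put $\Delta':=\{a,b\}\cup D\cup C_0$ and, for each $d^\ast\in D$, $\Sigma_{d^\ast}:=\{a,d^\ast\}\cup C_0\cup C_{d^\ast}$ with $C_{d^\ast}:=\{c''\in C\setminus C_0:d(c'')=d^\ast\}$. I would check that $\Delta'$ and each $\Sigma_{d^\ast}$ is a clique, hence (as $L$ is flag) a simplex: $D$ consists of pairwise friends of type $\bf d$, vertices of $C_0$ are adjacent to all of $\lk_X(ab)\supseteq D$, and any two vertices of $C_0\cup C_{d^\ast}$ span a triangle with the type $\bf ad$ edge $a\,d^\ast$, hence share it and so are friends or acquaintances by Construction~\ref{def:sys3}. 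The analysis of $C$ shows every maximal simplex of $L$ lies in $\Delta'$ or in some $\Sigma_{d^\ast}$, so $L=\Delta'\cup\bigcup_{d^\ast}\Sigma_{d^\ast}$, with $\Sigma_{d^\ast}\cap\Delta'$ the simplex on $\{a,d^\ast\}\cup C_0$ and $\Sigma_{d^\ast}\cap\Sigma_{d^{\ast\ast}}$ the simplex on $\{a\}\cup C_0$ for $d^\ast\neq d^{\ast\ast}$. Gluing $\Delta'$ and the $\Sigma_{d^\ast}$ one at a time, each new piece meets the partial union along a single simplex; since any finite cycle in $L$ lies in a finite such subunion, iterated Lemma~\ref{le:amalgam} shows $L$ has no full cycles, i.e.\ $L$ is $\infty$--large.

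For friends of type $\bf d$ in Case~I the argument is symmetric under interchanging the types $\bf ad$ and $\bf bc$ (so $a$ and $b$ swap roles), using Lemma~\ref{lem:cloopsI}(iii)--(iv) and Corollary~\ref{cor:friends}(ii). In Case~II one repeats the argument with Lemma~\ref{lem:cloopsII} in place of Lemma~\ref{lem:cloopsI}: for friends of type $\bf c$ one additionally splits $C\setminus C_0$ according to whether the edge from Lemma~\ref{lem:cloopsII}(i) is of type $\bf ad$ or $\bf bd$, giving fan simplices attached at $a$ or at $b$; for friends of type $\bf d$, Lemma~\ref{lem:cloopsII}(iii) forces the relevant vertices onto $ab$ and $L$ is a single simplex. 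The main obstacle will be the bookkeeping of the second paragraph — that $c''\in C\setminus C_0$ meets $D$ only in $d(c'')$ and meets $C\setminus C_0$ only inside its block $C_{d(c'')}$ — since this is what makes the simplicial decomposition exhaustive, and it rests entirely on the uniqueness assertions of Lemmas~\ref{lem:cloopsI} and~\ref{lem:cloopsII}.
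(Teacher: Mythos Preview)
Your proposal is correct and follows essentially the same approach as the paper: you describe $\lk_{\sys X}(cc')$ as a central simplex (your $\Delta'$, the paper's simplex on $D\cup C$) together with fan simplices indexed by the edges $xd$ and attached along the faces $\{x,d\}\cup C_0$, and then conclude by iterated application of Lemma~\ref{le:amalgam}. The paper treats Cases I and II simultaneously by writing $x\in\{a,b\}$ from the start, whereas you do Case~I first with $x=a$ and then indicate the modification for Case~II; your bookkeeping (that $c''\in C\setminus C_0$ meets $D$ only in $d(c'')$ and meets $C\setminus C_0$ only in $C_{d(c'')}$) is exactly the content the paper extracts from Lemmas~\ref{lem:cloopsI}(i,ii) and~\ref{lem:cloopsII}(i,ii).
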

\begin{proof}
We only verify the proposition for a pair of friends of type $\bf c$, since the proof for type $\bf d$ friends is analogous.
Let $c,c'$ be friends. Let $a,b$ be the vertices from Corollary~\ref{cor:friends}(i), which are unique of type $\bf {a,b}$ in $\lk_{\sys X}(cc')$. Then the set $D$ consisting of $a,b$ and all the vertices of type $\bf d$ in $\lk_{\sys X}(cc')$ spans a simplex. This simplex is contained in another obtained by adding the set $C$ of vertices of type $\bf c$ adjacent to $ab$. Consider a vertex $c''$ of type $\bf c$ in $\lk_{\sys X}(cc')-C$. By Lemmas~\ref{lem:cloopsI}(i) and~\ref{lem:cloopsII}(i), there are unique $x\in \{a,b\}$ and $d$ of type $\bf d$ in $\lk_{\sys X}(cc')$ such that $c''$ is adjacent to $xd$ but not to $\{a,b\}-\{x\}$ or to other vertices of type $\bf d$. Note that $c''$ is adjacent to all the vertices of $C$. Now suppose that $c^*$ is another vertex of type $\bf c$ in $\lk_{\sys X}(cc')-C$. By Lemmas~\ref{lem:cloopsI}(ii) and~\ref{lem:cloopsII}(ii), vertices $c''$ and $c^*$ are adjacent if and only if $c^*$ is also adjacent to $xd$. Thus $\lk_{\sys X}(cc')$ is obtained from the simplex
spanned by $D\cup C$ by amalgamating with simplices
corresponding to edges $xd$ along subsimplices spanned by $\{x,d\}\cup C$. See Figure~\ref{fig_ccfriends}.
By Lemma~\ref{le:amalgam}, the link $\lk_{\sys X}(cc')$ is $\infty$--large.
\end{proof}

\begin{figure}[h]
\begin{minipage}[b]{0.49\textwidth}
	\begin{center}
	\resizebox{!}{0.22\textheight}{\input{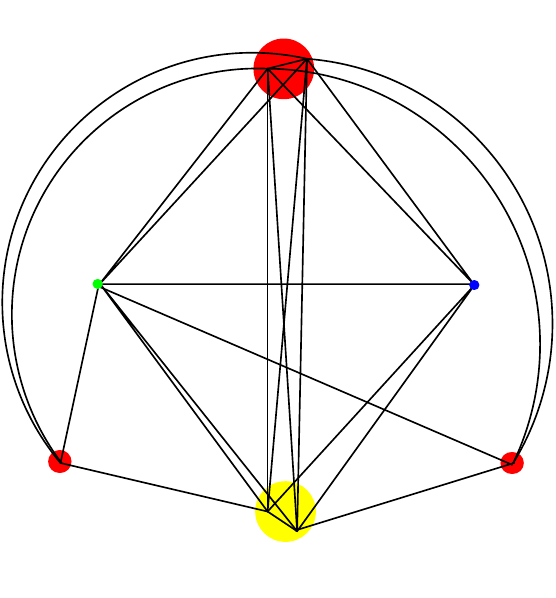_t}}
	\end{center}
  \end{minipage}
  \begin{minipage}[b]{0.49\textwidth}
	\resizebox{!}{0.22\textheight}{\input{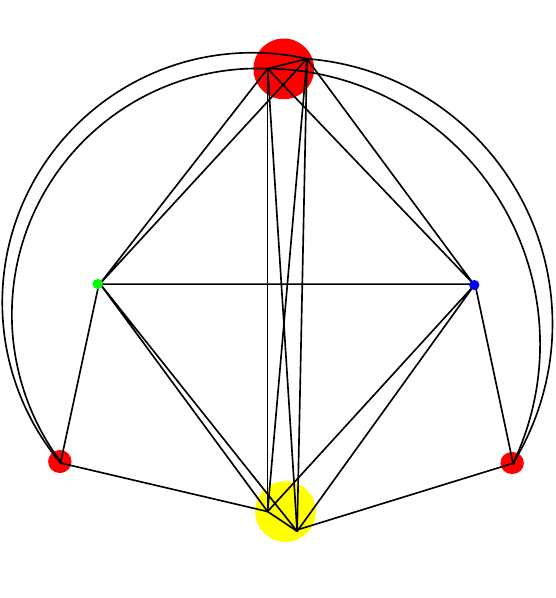_t}}
\end{minipage}
\caption[cc-Link]{On the left the link $\lk_{\sys X}(cc')$ for friends $c,c'$ in case I, on the right in case II.}
\label{fig_ccfriends}
\end{figure}

\begin{prop}
The link of an edge of acquaintances is a simplex.
\end{prop}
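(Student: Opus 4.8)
The plan is to prove that $\lk_{\sys X}(cc')$ is a clique; since $\sys X$ is flag, this clique will span a simplex. By the remark after Definition~\ref{def:friends} there are no acquaintances of type $\bf d$ in case II, and the types $\bf{ad}$ and $\bf{bc}$ are interchangeable in case I, so it suffices to treat an edge $cc'$ of acquaintances of type $\bf c$. By Definition~\ref{def:friends} there is a common edge $e$ of $c$ and $c'$ whose type is labeled by $k$ or $k'$; thus $e$ is of type $\bf{ad}$ in case I, and of type $\bf{ad}$ or $\bf{bd}$ in case II. Denote the endpoints of $e$ by $p,q$.

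First I would pin down the vertices of $\lk_{\sys X}(cc')$. Every edge of $\sys X$ that is not already an edge of $X$ joins two vertices of type $\bf c$ or two vertices of type $\bf d$, so a vertex of type $\bf a$, $\bf b$ or $\bf d$ lies in $\lk_{\sys X}(cc')$ if and only if it is adjacent to both $c,c'$ in $X$; by the relevant part of Corollary~\ref{cor:acquaint} the only such vertices are $p$ and $q$. Now let $v$ be a vertex of type $\bf c$ in $\lk_{\sys X}(cc')$. Then $c,c',v$ are pairwise friends or acquaintances, so Lemma~\ref{lem:cloopsI}(i) (in case I) or Lemma~\ref{lem:cloopsII}(i) (in case II) produces an edge $e'$ adjacent to all of $c,c',v$, and $e'$ is unique because $c,c'$, being acquaintances, are not friends on a common $\bf{ab}$--edge. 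Using that $c,c'$ have no common neighbour of type $\bf b$ when $e$ is of type $\bf{ad}$, and no common neighbour of type $\bf a$ when $e$ is of type $\bf{bd}$ (Corollary~\ref{cor:acquaint}), I would rule out the possibility that $e'$ has the ``wrong'' type among $\bf{ad},\bf{bd}$, and then the uniqueness of $p,q$ in Corollary~\ref{cor:acquaint} forces $e'=e$. Hence $v$ is adjacent to $e$, and so $\lk_{\sys X}(cc')=\{p,q\}\cup C$, where $C$ is the set of vertices of type $\bf c$ adjacent to the edge $e$.

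It then remains to verify that these vertices are pairwise adjacent. The vertices $p,q$ span the edge $e$; every $v\in C$ is adjacent to both $p$ and $q$ since it is adjacent to $e$; and any two distinct $v,v'\in C$ are both adjacent to $e$, a common edge whose type is labeled by $k$ or $k'$, hence they are friends or acquaintances and therefore adjacent in $\sys X$ by Construction~\ref{def:sys3}. Thus $\lk_{\sys X}(cc')$ is a clique, and spans a simplex.

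I expect the only delicate step to be the identification $e'=e$ in the middle paragraph: one must combine the existence-and-uniqueness statements for triples of type $\bf c$ vertices (Lemmas~\ref{lem:cloopsI}(i) and~\ref{lem:cloopsII}(i)) with the ``no common neighbour of the third type'' clauses of Corollary~\ref{cor:acquaint} to exclude, in case II, the mismatch between $e'$ being of type $\bf{bd}$ and $e$ of type $\bf{ad}$ (or vice versa). Once that is settled, the rest is a routine unwinding of Definition~\ref{def:friends} and Construction~\ref{def:sys3}.
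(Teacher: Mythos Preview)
Your proof is correct and follows essentially the same approach as the paper: reduce to acquaintances of type $\bf c$, use Corollary~\ref{cor:acquaint} to show that the only vertices of types $\bf a,\bf b,\bf d$ in the link are the endpoints of the edge $e$, and then use Lemma~\ref{lem:cloopsI}(i) or~\ref{lem:cloopsII}(i) to force every type $\bf c$ vertex in the link to be adjacent to $e$. You are in fact more explicit than the paper about the identification $e'=e$ in case~II (where Lemma~\ref{lem:cloopsII}(i) a priori only yields an edge of type $\bf{ad}$ or $\bf{bd}$), which the paper absorbs into its ``without loss of generality'' and a tacit appeal to the no-common-$\bf b$-neighbour clause of Corollary~\ref{cor:acquaint}(i).
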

\begin{proof}
Without loss of generality, we can assume that the acquaintances are of type $\bf c$, denote them by $c$ and $c'$. Again without loss of generality, let $a,d$ be the vertices from Corollary~\ref{cor:acquaint}(i). Then the link $\lk_{\sys X}(cc')$ contains $ad$ but no other vertex of type $\bf a$ or $\bf d$ or a vertex of type $\bf b$. Moreover, by Lemmas~\ref{lem:cloopsI}(i) and~\ref{lem:cloopsII}(i), all of the vertices of type $\bf c$ in $\lk_{\sys X}(cc')$ are connected to $ad$, hence to each other. Thus $\lk_{\sys X}(cc')$ is a simplex.
\end{proof}

\begin{prop}
The link of an edge of type $\bf{ac}$ is $6$--large.
\end{prop}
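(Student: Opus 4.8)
The plan is to describe $\lk_{\sys X}(ac)$ explicitly, collapse away its vertices of type $\bf c$ via Lemma~\ref{le:collaps}, and then recognise the remaining complex as $6$--large using the previous criteria.

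First I would identify $\lk_{\sys X}(ac)$. The edge $ac$ is labelled by $m\geq 6$, and $\lk_X(ac)$ is the generalized polygon on its vertices of types $\bf b$ and $\bf d$; its girth is $2m'$ in case~I and $2k'$ in case~II, hence $\geq 6$ in both. In $\sys X$ this link acquires new edges between vertices of type $\bf d$ (the $\bf d$--friends, and in case~I also the $\bf d$--acquaintances, lying in $\lk_X(ac)$), and new vertices of type $\bf c$: those $c'\neq c$ with $c'$ adjacent to $a$ and $c'$ a friend or acquaintance of $c$. Using Corollaries~\ref{cor:friends} and~\ref{cor:acquaint} I would show that each such $c'$ has a well-defined \emph{partner}---a vertex of type $\bf b$ of $\lk_X(ac)$ if $c,c'$ are friends, a vertex of type $\bf d$ of $\lk_X(ac)$ if they are acquaintances---since the $\bf a$--vertex of the witnessing $\bf{ab}$-- or $\bf{ad}$--edge is forced to be $a$ by the uniqueness in those corollaries. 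Lemmas~\ref{lem:cloopsI} and~\ref{lem:cloopsII} then describe the remaining adjacencies: two new $\bf c$--vertices are adjacent iff they share an $\bf{ad}$--edge (in case~II, an $\bf{ad}$-- or $\bf{bd}$--edge), hence iff they lie in a common $\bf c$--clique $V_{d'}=\lk_X(ad')\cap\{\text{type }\bf c\}$ for some $d'$ in $\lk_X(ac)$; a new $\bf c$--vertex is adjacent to a $\bf d$--vertex $d'$ iff it lies in $V_{d'}$, and to a $\bf b$--vertex iff that vertex is its friend--partner. The two cases must be run separately: in case~II there are extra $\bf{bd}$--edges among the $\bf c$--vertices and no $\bf d$--acquaintances, and the small cases $k'=3,4,5$ (where $\lk_X(ac)$ is a generalized triangle, quadrangle or pentagon) need separate attention.

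Next I would collapse the $\bf c$--vertices. The target is a complex $Q$ obtained from $\lk_X(ac)$ by adjoining its new $\bf d$--$\bf d$ edges, a ``twin'' $b'^{\dagger}$ of each $\bf b$--vertex $b'$ (adjacent exactly to $\st_{\lk_X(ac)}(b')$, and to the twins of the $\bf b$--vertices at distance $2$ from $b'$), and a ``pendant'' $d'^{\flat}$ of each $\bf d$--vertex $d'$ (adjacent to $d'$ and to the twins of the $\bf b$--neighbours of $d'$); the precise recipe is dictated by the adjacency description above. Let $f\colon\lk_{\sys X}(ac)\to Q$ fix the $\bf b$-- and $\bf d$--vertices, send each friend--type $\bf c$--vertex to the twin of its partner and each acquaintance--type $\bf c$--vertex to the pendant of its partner; one then checks that $v,v'$ are adjacent in $\lk_{\sys X}(ac)$ iff $f(v),f(v')$ are adjacent or equal in $Q$, so by Lemma~\ref{le:collaps} it is enough to show $Q$ is $6$--large. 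The twins and pendants, together with the relevant stars, form joins of a simplex with a discrete set (Corollary~\ref{cor:join}) glued along simplices (Lemma~\ref{le:amalgam}) to the ``core'' $Q_0$ obtained from $\lk_X(ac)$ by adding the new $\bf d$--$\bf d$ edges, so it remains to verify that $Q_0$ has no full $4$-- or $5$--cycle. For this I would use that each new $\bf d$--$\bf d$ edge joins $\bf d$--vertices with a common $\bf b$--neighbour (case~II; case~I additionally allows the $\bf{bc}$--edge acquaintances) and that $\lk_X(ac)$ has girth $\geq 6$, running the argument of Lemma~\ref{le:gamma*} (or Lemma~\ref{lem:gammatilde}) on the incidence graph of $\lk_X(ac)$.

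The hard part is this last verification, together with the bookkeeping leading up to it. The two kinds of new $\bf c$--vertices attach incompatibly---a friend--type one like a twin of a $\bf b$--vertex adjacent to all of its neighbours, an acquaintance--type one like a pendant on a single $\bf d$--vertex---so the collapse target must be set up delicately and every mixed adjacency (friend--type versus acquaintance--type $\bf c$--vertex, $\bf c$--vertex versus $\bf b$-- or $\bf d$--vertex) checked against Lemmas~\ref{lem:cloopsI} and~\ref{lem:cloopsII}. Equally delicate is controlling the new $\bf d$--$\bf d$ edges inside $\lk_X(ac)$ so that no short full cycle survives in $Q_0$; this is precisely the place where the girth bound---$m'\geq 6$ in case~I, $k'\geq 3$ in case~II---enters, and it is what separates the admissible types from the excluded ones.
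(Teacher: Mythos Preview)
Your overall strategy---collapse the $\bf c$-vertices via Lemma~\ref{le:collaps} and then peel off extra pieces---is sound, and your collapse map $f\colon\lk_{\sys X}(ac)\to Q$ does satisfy the hypothesis of Lemma~\ref{le:collaps}. But the subsequent reduction from $Q$ to your core $Q_0$ has a real gap: the twin $b'^{\dagger}$ cannot be removed by Lemma~\ref{le:amalgam}, because its link in $Q$ is not a simplex. Indeed $b'^{\dagger}$ is adjacent both to $b'$ and to every $b''^{\dagger}$ with $b''$ at distance~$2$ from $b'$ in $\lk_X(ac)$, yet $b'$ is \emph{not} adjacent to such $b''^{\dagger}$ (since $b'\notin\st_{\lk_X(ac)}(b'')$). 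So the twins interact with one another through edges that do not lie over any simplex of $Q_0$, and your amalgam decomposition breaks down.

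The paper's proof avoids this by reversing your two steps. It first observes that in $\lk_{\sys X}(ac)$ the star of every vertex of type~$\bf b$, and of every acquaintance of~$c$, is already a simplex; removing these via Lemma~\ref{le:amalgam} leaves only the $\bf d$-vertices and the $b$-cliques of friends. Then collapsing each $b$-clique (Lemma~\ref{le:collaps}) yields precisely $\Gamma^*$ as in Definition~\ref{def:gamma*}, where $\Gamma$ is the graph on type-$\bf d$ vertices, and Lemma~\ref{le:gamma*} finishes. Your argument can be repaired the same way: in $Q$, first remove the pendants and then the original $\bf b$-vertices (both have simplex stars), landing on the subcomplex spanned by the $\bf d$-vertices and the twins---which \emph{is} $\Gamma^*$---rather than on your $Q_0$.

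Two smaller corrections. First, the girth of $\lk_X(ac)$ is $2m$ (the label on $\bf{ac}$) in both cases, not $2m'$ or $2k'$; since $m\geq 6$ this gives girth $\geq 12$, which is exactly what the argument that $\Gamma$ is $6$-large needs (a full $4$- or $5$-cycle in $\Gamma$ unfolds to a closed path of length $8$ or $10$ in $\lk_X(ac)$, so girth $\geq 6$ would not suffice). Second, type-$\bf d$ acquaintances in case~I never both lie in $\lk_X(ac)$: by Corollary~\ref{cor:acquaint}(ii) they have no common vertex of type~$\bf a$. Hence every $\bf d$--$\bf d$ edge in $\lk_{\sys X}(ac)$ comes from friends, i.e.\ from a common $\bf b$-neighbour, uniformly in both cases, and no separate treatment of small $k'$ is required.
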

\begin{proof}
The link $\lk_X(ac)$ of an edge $ac$ of type $\bf{ac}$ is a bipartite graph of girth $2m$ on vertices of types $\bf b$ and $\bf d$.
When we pass to $\lk_{\sys X}(ac)$, two vertices of type $\bf d$ are connected by an edge if and only if they have a common neighbor of type $\bf b$, by Corollaries~\ref{cor:friends}(ii) and~\ref{cor:acquaint}(ii). Denote by $\Gamma\subset \lk_{\sys X}(ac)$ the subgraph spanned by the vertices of type $\bf d$.

There are two classes of vertices of type $\bf c$ in $\lk_{\sys X}(ac)$ coming from friends and acquaintances of $c$, respectively.
If $c'$ in $\lk_{\sys X}(ac)$ is a friend of $c$, then by Corollary~\ref{cor:friends}(i) it is adjacent to a unique vertex $b(c')$ of type $\bf b$ in $\lk_{\sys X}(ac)$ and to all its neighbors of type $\bf d$. Moreover, $c'$ is not adjacent to any other vertex of type $\bf d$ in $\lk_{\sys X}(ac)$. All $c'$ with common $b=b(c')$ span a clique, which will be called the \emph{$b$--clique}.

Assume now that $c'$ in $\lk_{\sys X}(ac)$ is an acquaintance of $c$. By Corollary~\ref{cor:acquaint}(i), the vertex $c'$ is adjacent to a unique vertex $d(c')$ of type $\bf{d}$ in $\lk_{\sys X}(ac)$, and to none of the vertices of type $\bf b$. All $c'$ with common $d=d(c')$ span a clique, which we call the \emph{$d$--clique}. By Lemmas~\ref{lem:cloopsI}(i) and~\ref{lem:cloopsII}(i), two vertices $c',c''$ of type $\bf c$ in $\lk_{\sys X}(ac)$ are adjacent if and only if they have a common neighbor of type $\bf d$. Thus if they are not in the same $b$--clique or $d$--clique, they are adjacent if and only if: either both $c',c''$ are friends of $c$ and the stars of $b(c'), b(c'')$ in $\lk_{X}(ac)$ intersect, or if $c'$ is friend of $c$ and $c''$ is an acquaintance of $c$ and $d(c'')$ is adjacent to $b(c')$, or vice-versa. See Figure~\ref{fig_ac1}.

\begin{figure}[h]
 \begin{center}
\resizebox{!}{0.4\textwidth}{\input{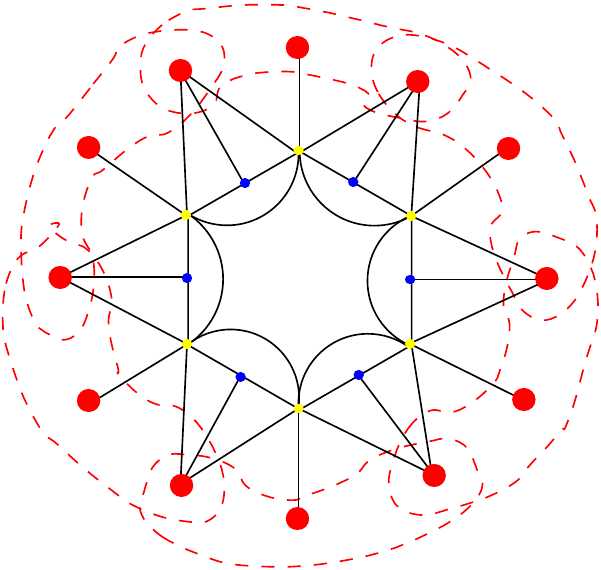_t}}
\caption[ac-Link]{$\lk_{\sys X}(ac)$ in cases I and II. The big red dots stand for $b$--cliques and $d$--cliques. The edges connecting these cliques to other vertices represent multiple edges.}
\label{fig_ac1}
\end{center}
\end{figure}

The stars of all the vertices of type $\bf b$, and of acquaintances of $c$ in $\lk_{\sys X}(ac)$ are simplices. By Lemma~\ref{le:amalgam}, to prove that the link $\lk_{\sys X}(ac)$ is $6$--large it suffices to prove that the complex obtained by removing these vertices is $6$--large. We remove them, and denote by $\Gamma^*$ be the graph obtained from the $1$--skeleton of the resulting complex by collapsing each $b$--clique to a vertex.
By Lemma~\ref{le:collaps}, it suffices to verify that $\Gamma^*$ is $6$--large. The maximal cliques of $\Gamma$ correspond to $b$--cliques, thus $\Gamma^*$ is obtained from $\Gamma$ as in Definition~\ref{def:gamma*}. As in the proof of Lemma~\ref{lem:vertexlinks}, the flag complex spanned on $\Gamma$ is $6$--large. Hence by Lemma~\ref{le:gamma*}, the flag complex spanned on $\Gamma^*$ is $6$--large.
\end{proof}

\begin{prop}
The link of an edge of type $\bf{cd}$ is $6$--large.
\end{prop}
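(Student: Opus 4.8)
The plan is to reduce, using the lemmas of Section~\ref{Sec_prelim}, the $6$--largeness of $\lk_{\sys X}(cd)$ (for an edge $cd$ of type $\mathbf{cd}$, with $c$ of type $\mathbf c$ and $d$ of type $\mathbf d$) to the fact that $\Gamma:=\lk_X(cd)$ is a bipartite graph of girth $2l\geq 6$ on vertices of types $\mathbf a$ and $\mathbf b$. First I would note that no new edges are introduced among vertices of types $\mathbf a$ and $\mathbf b$, so the full subcomplex of $\lk_{\sys X}(cd)$ they span is the flag complex on $\Gamma$; since $\Gamma$ is triangle--free of girth $2l$, this flag complex equals $\Gamma$ and is $2l$--large.

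Next I would describe the new vertices of $\lk_{\sys X}(cd)$, which are of types $\mathbf c$ and $\mathbf d$. Take such a vertex $c'$ of type $\mathbf c$; the type $\mathbf d$ case is symmetric, and recall that in case II there are no acquaintances of type $\mathbf d$. By Corollaries~\ref{cor:friends} and~\ref{cor:acquaint}: if $c'$ is an acquaintance of $c$, then $c$, $c'$ and the fixed vertex $d$ all share an edge $xd$ of type $\mathbf{ad}$ (or $\mathbf{bd}$ in case II), so $c'$ is adjacent to the vertex $x$ of $\Gamma$ but to no other vertex of type $\mathbf a$ or $\mathbf b$, and by Lemmas~\ref{lem:cloopsI}(i) and~\ref{lem:cloopsII}(i) the acquaintances of $c$ sharing $xd$ with $c$, together with $x$, span a simplex containing the star of $c'$ in $\lk_{\sys X}(cd)$. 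If $c'$ is a friend of $c$, then $c$ and $c'$ share an edge $ab$ of type $\mathbf{ab}$, and since $c'$ is adjacent to $d$ Corollary~\ref{cor:friends}(i) forces $a$ and $b$ to be adjacent to $d$; hence $abcd$ is a tetrahedron, $ab$ is an edge of $\Gamma$, and $c'$ is adjacent exactly to the endpoints $a,b$ of $ab$. Using that $\lk_X(ab)$ is complete bipartite, one sees that all vertices of types $\mathbf c$ and $\mathbf d$ adjacent to $ab$ lie in $\lk_{\sys X}(cd)$; those of type $\mathbf c$ form a clique $K^{\mathbf c}_{ab}$, those of type $\mathbf d$ a clique $K^{\mathbf d}_{ab}$, and $\{a,b\}\cup K^{\mathbf c}_{ab}\cup K^{\mathbf d}_{ab}$ is a simplex.

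Then I would apply Lemma~\ref{le:amalgam} to delete the vertices whose star is a simplex --- the acquaintance--type vertices, and in case II also the friend--type vertices of type $\mathbf d$ (their stars being simplices by Lemma~\ref{lem:cloopsII}) --- and Lemma~\ref{le:collaps} to collapse each clique $K^{\mathbf c}_{e}$ to a single vertex $p^{\mathbf c}_e$, and in case I also each $K^{\mathbf d}_e$ to $p^{\mathbf d}_e$. Keeping track of adjacencies by the description above together with Lemmas~\ref{lem:cloopsI} and~\ref{lem:cloopsII} --- in particular that two friends of $c$ attached to edges of $\Gamma$ sharing a common $\mathbf a$--vertex $a$ are adjacent, both being adjacent to the $k$--labelled edge $ad$, and likewise for the $\mathbf b$--vertex and the $k'$--labelled edge $\mathbf{bc}$ in case I --- the resulting graph is $\Gamma^*$ of Definition~\ref{def:gamma*} in case II and $\widetilde\Gamma$ of Definition~\ref{def:gammatilde} in case I (with $(v,v)\mapsto v$, $p^{\mathbf c}_e\mapsto(a,e)$, $p^{\mathbf d}_e\mapsto(b,e)$ for $e=ab$). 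Since $\Gamma$ is triangle--free of girth $2l\geq 6$, Lemma~\ref{le:gamma*} in case II and Lemma~\ref{lem:gammatilde} in case I then give that $\lk_{\sys X}(cd)$ is $6$--large, completing the proof of Theorem~\ref{thm:dim3partCoxeter}.

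The step I expect to be the main obstacle is the adjacency bookkeeping in the second and third paragraphs: precisely identifying the new vertices, deciding which pairs of them are joined, distinguishing the roles of the $\mathbf a$-- and $\mathbf b$--endpoints of an edge of $\Gamma$ (this is exactly what makes case~I produce $\widetilde\Gamma$ and case~II produce $\Gamma^*$), and in particular ruling out spurious edges between the collapsed type $\mathbf c$ and type $\mathbf d$ vertices attached to distinct edges of $\Gamma$ --- which I expect requires an auxiliary convexity statement for a union of stars, in the spirit of Sublemmas~\ref{sub:I} and~\ref{sub:II}. Once this is settled, the two reduction lemmas and the girth bound $2l\geq 6$ finish the argument just as for the edge links treated above.
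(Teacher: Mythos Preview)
Your plan is essentially identical to the paper's proof: describe $\lk_{\sys X}(cd)$ over the bipartite graph $\Gamma=\lk_X(cd)$ of girth $2l\geq 6$, strip off the acquaintance-type vertices (and in case~II also the type~$\mathbf d$ friends) via Lemma~\ref{le:amalgam}, collapse the cliques $K^{\mathbf c}_e$ (and in case~I also $K^{\mathbf d}_e$) via Lemma~\ref{le:collaps}, and identify the result with $\Gamma^*$ in case~II and with $\widetilde\Gamma$ in case~I.

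The one place where you overestimate the difficulty is the step you single out as the ``main obstacle'': ruling out edges between a friend $c'$ of $c$ attached to an edge $e=ab$ of $\Gamma$ and a friend $d'$ of $d$ attached to a distinct edge $e'$. No new convexity sublemma is needed; this is handled entirely by Corollaries~\ref{cor:friends} and~\ref{cor:acquaint}, which are already available. If $c',d'$ are adjacent in $\sys X$ then (being of different types) they are adjacent in $X$, so $d'$ is adjacent to both $c$ and $c'$. Were $c,c'$ acquaintances, Corollary~\ref{cor:acquaint}(i) or~(iii) would force $d'=d$; hence $c,c'$ are friends, and symmetrically $d,d'$ are friends (using Corollary~\ref{cor:acquaint}(ii) in case~I). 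Now Corollary~\ref{cor:friends}(i) applied to $c,c'$ gives that $d'$ is adjacent to $a=a(c')$ and $b=b(c')$; since $d$ is also adjacent to $a,b$ (the tetrahedron $abcd$), the uniqueness in Corollary~\ref{cor:friends}(ii) yields $a(d')=a$ and $b(d')=b$, i.e.\ $e'=e$. This is exactly the argument the paper gives in one sentence, citing Corollary~\ref{cor:acquaint}(i--iii). Once this is in hand, the identification with $\widetilde\Gamma$ (case~I) and $\Gamma^*$ (case~II) is exactly as you describe.
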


\begin{proof}
The link $\Gamma$ of an edge $cd$ in $X$ of type $\bf {cd}$ is a bipartite graph on vertices of type $\bf a$ and $\bf b$ of girth $2l\geq 6$.
We will now describe how one obtains $\lk_{\sys X}(cd)$ from $\Gamma$. If $c'$ is a vertex of type $\bf c$ in $\lk_{\sys X}(cd)$ that is a friend of $c$, then by Corollary~\ref{cor:friends}(i) it is adjacent to unique adjacent $a(c'),b(c')$ of type $\bf a,b$ in $\lk_{\sys X}(cd)$.
If $c'$ is an acquaintance of $c$, then by Corollary~\ref{cor:acquaint}(i) it is adjacent to a unique $x=x(c')$ in $\lk_{\sys X}(cd)$ of type $\bf a$, or possibly $\bf b$ in case~II.

Let $c',c''$ be of type $\bf c$ in $\lk_{\sys X}(cd)$. In case I the vertices $c',c''$ are adjacent if and only if $a(c')=a(c'')$, by Lemma~\ref{lem:cloopsI}(i). In case II, by Lemma~\ref{lem:cloopsII}(i), the vertices $c',c''$ are adjacent also if $b(c')=b(c'')$. In the same way we describe vertices of type $\bf d$ in $\lk_{\sys X}(cd)$. If vertices $c',d'$ of types $\bf c,d$ in $\lk_{\sys X}(cd)$ are adjacent, then by Corollary~\ref{cor:acquaint}(i--iii), the vertices $c,c'$ are friends, $d,d'$ are friends and $a(c')=a(d'), \ b(c')=b(d')$. Conversely, if we have latter equalities, then $c',d'$ are adjacent.

\begin{figure}[h]
\begin{center}
    	\resizebox{!}{0.4\textwidth}{\input{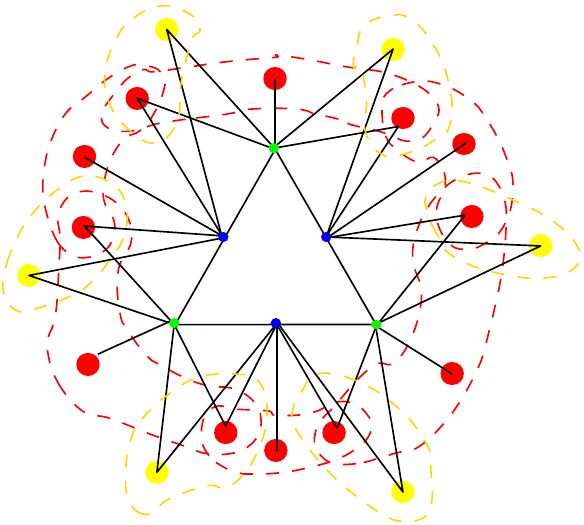_t}}
\caption[cd-Link]{$\lk_{\sys X}(cd)$ in case II. The big red and yellow dots represent cliques of vertices of type $\bf c$, respectively $\bf d$. The encircled dots form even larger cliques.}
\label{fig_cd2}
\end{center}
\end{figure}

Assume first that we are in case II. All the vertices of type $\bf d$ in $\lk_{\sys X}(cd)$ are then friends of $d$, and their stars are simplices.
The stars of the acquaintances of $c$ are also simplices. See Figure~\ref{fig_cd2}. By Lemma~\ref{le:amalgam}, to prove that $\lk_{\sys X}(cd)$ is $6$--large it suffices to prove that the complex obtained by removing the vertices of type $\bf d$ and acquaintances of $c$ is $6$--large. We collapse the resulting complex by identifying all the friends $c'$ of $c$ with common $a(c')$ and $b(c')$. The result $\Gamma^*$ of the collapse is $6$--large by Lemma~\ref{le:gamma*}, since $\Gamma$ is $6$--large.

Finally, consider case I, see Figure~\ref{fig_cd1}. As before, we remove from $\lk_{\sys X}(cd)$ all the acquaintances of $c$ and $d$, using Lemma~\ref{le:amalgam}. Let $\widetilde{\Gamma}$ be the graph obtained by collapsing the vertices $c'$ of type $\bf c$ with common $a(c')$ and $b(c')$, and the vertices $d'$ of type $\bf d$ with common $a(d')$ and $b(d')$. Assign to each vertex $x$ of type $\bf a$ or $\bf b$ of $\widetilde{\Gamma}$ the pair $(x,x)$, and to each collapsed clique the pair $\big(a(c'),a(c')b(c')\big)$ for $c'$ of type $\bf c$, and $\big(b(d'),a(d')b(d')\big)$ for $d'$ of type $\bf d$. This shows that $\widetilde{\Gamma}$ has the form required in Definition~\ref{def:gammatilde}. By Lemma~\ref{lem:gammatilde}, the complex spanned on $\widetilde{\Gamma}$ is $6$--large, since $\Gamma$ has girth $\geq 6$.
\end{proof}

\begin{figure}[h]
\begin{center}
    	\resizebox{!}{0.4\textwidth}{\input{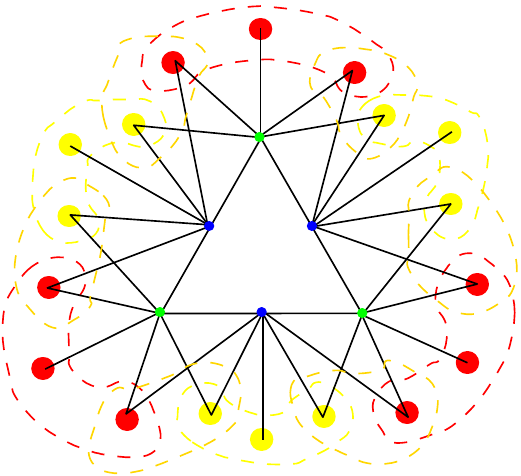_t}}
\caption[cd-Link]{$\lk_{\sys X}(cd)$ in case I: The big red and yellow dots represent cliques of vertices of type $\bf c$, respectively $\bf d$. The encircled dots form cliques as well.}
\label{fig_cd1}
\end{center}
\end{figure}

This concludes the proof that the systolization $\sys X$ in Construction~\ref{def:sys3} is indeed systolic, as required in Theorem~\ref{thm:dim3partCoxeter}.

\section{Systolization of the Davis realization}
\label{sec:Davis}

In this section we complete the proofs of Theorems~\ref{thm:dim2} and~\ref{thm:dim3}, by systolizing the Davis realization:

\begin{thm}\label{thm:dim2part2}
Let $W$ be a triangle Coxeter group with finite exponents distinct from $(2,4,4), (2,4,5)$ and $(2,5,5)$. Then the Davis realization of a building of type $W$ admits a systolization.
\end{thm}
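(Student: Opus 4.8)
The plan is to carry out the subdivided analogue of Construction~\ref{def:sys2} and Section~\ref{Sec:dim2}. If $W$ is finite, then a building of type $W$ is finite, so the simplex spanned on the vertex set of its Davis realization $D$ is a systolization; assume then that $W$ is infinite. As in Section~\ref{Sec:dim2} we may assume $W$ is a triangle group with finite exponents, and, after relabelling, that either all three exponents are $\geq 3$ or they are $(2,k,m)$ with $k\geq 3$ and $m\geq 6$ (a triangle group with two exponents equal to $2$ and the third exponent finite is itself finite, and these two patterns, together with the excluded types, cover all infinite triangle groups; note that the all-exponents-$\geq 3$ case, although the Coxeter realization is then already systolic, still needs treatment here). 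Since all exponents are finite, $D$ is precisely the barycentric subdivision of the Coxeter realization $X$. I would define the \emph{systolization} $\sys D$ of $D$ to be the flag simplicial complex spanned on the $1$--skeleton of $D$ together with an edge joining the barycenters of every two distinct chambers $c,c'$ of $X$ whose intersection contains an edge of $X$ or a vertex of type $\bf 2$; equivalently, one spans a simplex on the set of barycenters of all chambers through a fixed edge of $X$, and a simplex on the set of barycenters of all chambers through a fixed vertex of type $\bf 2$, the latter family being empty when $W$ has no exponent $2$. The inclusion $D\subset\sys D$ is a quasi-isometry because the new edges join vertices at distance $2$ in $D$; the construction is canonical, so the action of the group of type preserving automorphisms of $X\cong D$ extends to $\sys D$; and it remains to prove that $\sys D$ is systolic.

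Simple-connectedness follows as in Lemma~\ref{lem:simplycon}: each new edge is homotopic rel endpoints to the edge-path of length $2$ in $D^{(1)}$ through the barycenter of a shared edge, respectively through a shared vertex of type $\bf 2$, and these paths close up to triangles in $\sys D$; hence every loop in $\sys D^{(1)}$ is homotopic into $D^{(1)}$, and $D$ is simply-connected because it is homeomorphic to the simply-connected complex $X$.

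For the vertex links, the link of the barycenter of an edge $e$ of $X$ becomes the join of the two-element set of endpoints of $e$ with the simplex on the barycenters of the chambers through $e$, hence is $\infty$--large by Corollary~\ref{cor:join}. The link of an original vertex $v$ of type $\bf k$ or $\bf m$ is the barycentric subdivision of the generalized polygon $\lk_X(v)$, which has girth $\geq 6$, with extra edges — the images of the new chamber-barycenter edges — joining the midpoints of any two edges of $\lk_X(v)$ that share a vertex; on such a complex no two vertices of $\lk_X(v)$ are consecutive on a cycle, and a full cycle of length $<6$ would trace out a cycle of length $<6$ in $\lk_X(v)$, so there is none. The link of the barycenter of a chamber is $6$--large by iterated use of Lemma~\ref{le:amalgam}: it is built from a boundary hexagon and several simplices by amalgamating along simplices (alternatively one inspects its short cycles directly).

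The main obstacle, and what forces the second family of new edges, is the link of a vertex $v$ of type $\bf 2$. There $\lk_X(v)$ is a complete bipartite graph, of girth $4$, so adding only the edges between barycenters of chambers sharing an edge would leave a full $4$--cycle on the barycenters of the four chambers $vwu,\ vwu',\ vw'u',\ vw'u$ of a ``square'' at $v$ (with $w,w'$ of type $\bf k$ and $u,u'$ of type $\bf m$) — exactly the phenomenon that obstructs the systolicity of $X$ itself in Section~\ref{Sec:dim2}. Spanning a simplex on all barycenters of chambers through $v$ destroys precisely these cycles: $\lk_{\sys D}(v)$ then becomes that simplex with, for each edge of $X$ at $v$, a cone vertex coning off the sub-simplex of barycenters of the chambers through that edge, so it is $\infty$--large by iterated Lemma~\ref{le:amalgam}. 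One checks finally that the second family of edges affects only the links of chamber barycenters, since two chambers sharing a vertex of type $\bf 2$ and a vertex of type $\bf k$ or $\bf m$ must share the edge between those two vertices. Together with Theorem~\ref{thm:dim2partCoxeter} this establishes Theorem~\ref{thm:dim2}.
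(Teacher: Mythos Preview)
Your construction is not systolic: the link of a chamber barycenter contains a full $5$--cycle. Let $c=v_2v_kv_m$ be a chamber (with $v_2,v_k,v_m$ of types $\mathbf 2,\mathbf k,\mathbf m$), let $a=v_2v_kw$ be any chamber sharing the panel $v_2v_k$ with $c$ (so $w\neq v_m$), and let $b=v_2v_mw'$ be any chamber sharing the panel $v_2v_m$ with $c$ (so $w'\neq v_k$); such $a,b$ exist already in the thin case. In $\lk_{\sys D}(\hat c)$ the five vertices
\[
v_k,\ \hat a,\ \hat b,\ v_m,\ \widehat{v_kv_m}
\]
form a cycle: $v_k\in a$, the chambers $a,b$ share the type~$\mathbf 2$ vertex $v_2$ (your second family of edges), $v_m\in b$, and $v_m,\widehat{v_kv_m},v_k$ is a path in the boundary hexagon. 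None of the five diagonals is present: $v_k\notin b$, $v_m\notin a$, neither $a$ nor $b$ contains the edge $v_kv_m$, and $v_k,v_m$ are distinct $X$--vertices, hence nonadjacent in the barycentric subdivision. So your claim that this link ``is built from a boundary hexagon and several simplices by amalgamating along simplices'' fails: the big simplex $S$ on chambers through $v_2$ meets the hexagon in a non-simplex (it touches $v_2,\widehat{v_2v_k},v_k$ on one side and $v_2,\widehat{v_2v_m},v_m$ on the other), so Lemma~\ref{le:amalgam} does not apply.

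The paper avoids this entirely by passing to the \emph{face complex}. It takes the systolization $\sys X$ of the Coxeter realization from Construction~\ref{def:sys2} (or $\sys X=X$ when all exponents are $\geq 3$), and then sets the systolization of the Davis realization $X'$ to be the face complex $\sys X^f$. Haglund's observation (Proposition~\ref{lem:face}) says that the face complex of a systolic complex is systolic, so no link computation is needed; one only checks the chain of embeddings $X'\subset X^f\subset \sys X^f$ and that it is a quasi-isometry. The effect, compared with your attempt, is that in $\sys X^f$ one also has edges between $X$--vertices lying in a common edge of $\sys X$ and between edge barycenters lying in a common chamber --- precisely the diagonals your $5$--cycle is missing.
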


\begin{thm}\label{thm:dim3part2}
Let $W$ be a Coxeter group of rank $4$ with finite exponents. Assume that all of its special rank $3$ subgroups are infinite and not of type $(2,4,4), (2,4,5)$ or $(2,5,5)$. Moreover, assume that there is at most one exponent $2$. Then the Davis realization of a building of type $W$ admits a systolization.
\end{thm}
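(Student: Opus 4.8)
The plan is to deduce Theorem~\ref{thm:dim3part2} from Theorem~\ref{thm:dim3partCoxeter} by a \emph{face complex} construction; the same construction also proves Theorem~\ref{thm:dim2part2}. For a flag simplicial complex $Y$ let $\mathcal{F}(Y)$ be the flag complex on the graph whose vertices are the nonempty simplices of $Y$, with $\sigma,\tau$ joined by an edge whenever $\sigma\cup\tau$ is a simplex of $Y$. Then $Y$ embeds as a full subcomplex of $\mathcal{F}(Y)$ via $v\mapsto\{v\}$, and the barycentric subdivision $\mathrm{sd}(Y)$ embeds as a subcomplex via $\sigma_0\subsetneq\dots\subsetneq\sigma_j\mapsto\{\sigma_0,\dots,\sigma_j\}$. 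Let $\sys X$ be the systolization of the Coxeter realization $X$ from Construction~\ref{def:sys3}, and let $\widehat{X_{\mathrm{Dav}}}$ be obtained from $\mathcal{F}(\sys X)$ by deleting the open stars of all vertices of $X$ of infinite type. Since every special rank~$3$ subgroup of $W$ is infinite, every vertex of $X$ has infinite type, so $\widehat{X_{\mathrm{Dav}}}$ is simply the full subcomplex of $\mathcal{F}(\sys X)$ spanned by the simplices of $\sys X$ of positive dimension. The Davis realization $X_{\mathrm{Dav}}$ sits inside $\mathrm{sd}(X)$ and contains no barycenter of an original vertex, so $X_{\mathrm{Dav}}\subset\mathrm{sd}(X)\subset\mathcal{F}(\sys X)$ lies in $\widehat{X_{\mathrm{Dav}}}$. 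The claim is that $\widehat{X_{\mathrm{Dav}}}$ is the required systolization.

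The easy verifications come first. The complex $\widehat{X_{\mathrm{Dav}}}$ is flag and finite dimensional by construction, and the action of the group of type preserving automorphisms extends because the construction is canonical (a type preserving action on $X_{\mathrm{Dav}}$ determines the building, hence acts on $\sys X$). The inclusion $X_{\mathrm{Dav}}\subset\widehat{X_{\mathrm{Dav}}}$ is a quasi-isometry by the usual argument, since $X\subset\sys X$ is one and, the thickness being finite, all deleted open stars are uniformly bounded. For simple-connectedness one argues in the spirit of Lemmas~\ref{lem:simplycon} and~\ref{lem:simplyconvertex}: the edges added by $\mathcal{F}(\sys X)$ join simplices lying in a common simplex of $\sys X$, so every loop in $\widehat{X_{\mathrm{Dav}}}^{(1)}$ is homotopic to one inside $\mathrm{sd}(\sys X)$, which is then pushed into $X_{\mathrm{Dav}}$ using that $\sys X$ and all its simplex links are simply connected (indeed contractible) and that near a deleted vertex the extra cells of $\mathcal{F}(\sys X)$ supply the missing fillings; finally $X_{\mathrm{Dav}}$ itself is simply connected, being contractible (the Davis realization of a building of finite thickness carries a CAT(0) metric).

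It remains to check that the links of vertices of $\widehat{X_{\mathrm{Dav}}}$ are $6$--large. A vertex is a positive dimensional simplex $\sigma$ of $\sys X$, and its link is the full subcomplex on positive dimensional simplices of $\mathcal{F}\big(\st_{\sys X}(\sigma)\big)$ with the vertex $\sigma$ removed, where $\st_{\sys X}(\sigma)=\sigma\ast L$ and $L=\lk_{\sys X}(\sigma)$. Using the join structure and repeated use of Lemma~\ref{le:amalgam} one cuts this link into pieces, each obtained from $L$ — or from a link inside $L$ — by the operations $\Gamma\mapsto\Gamma^{*}$ of Definition~\ref{def:gamma*} and $\Gamma\mapsto\widetilde{\Gamma}$ of Definition~\ref{def:gammatilde}, by collapses as in Lemma~\ref{le:collaps}, and by coning off simplices. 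Since every simplex link $L$ of $\sys X$ is $6$--large — the edge links were handled in Section~\ref{sec:edge}, the vertex links in Section~\ref{Sec_dim3}, and the links of higher simplices follow — Lemmas~\ref{le:amalgam},~\ref{le:collaps},~\ref{le:gamma*}, and~\ref{lem:gammatilde} then show each piece, hence $\lk_{\widehat{X_{\mathrm{Dav}}}}(\sigma)$, is $6$--large. The extreme cases are instructive: when $\sigma$ is an edge of $X$ of type $\bf{ab}$ the link is a simplex because $\lk_{\sys X}(ab)$ is a simplex by Proposition~\ref{lem:linkab}, while for the high dimensional cliques that Construction~\ref{def:sys3} introduces the star $\st_{\sys X}(\sigma)$ is small and the link is again a simplex.

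The main obstacle is precisely this last step. A priori $\lk_{\widehat{X_{\mathrm{Dav}}}}(\sigma)$ is a large complex — its dimension can exceed that of $X_{\mathrm{Dav}}$ because of those high dimensional cliques — and one must, case by case over the types of $\sigma$, identify it with something to which the combinatorial criteria of Section~\ref{Sec_prelim} apply: this means checking in each case the hypotheses of Lemmas~\ref{le:gamma*} and~\ref{lem:gammatilde} (that the maximal cliques involved meet only along vertices, respectively that the relevant girths are $\geq 6$), and keeping track of which faces of $\sigma\ast L$ survive the deletion of the infinite type vertices and how adjacency in $\mathcal{F}(\sys X)$ restricts to them. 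The geometric input is the convexity package of Section~\ref{Sec_dim3} (Lemmas~\ref{le:convexStar3} through~\ref{lem:cloopsII}), since that is exactly what controls how friends and acquaintances sit inside the stars $\st_{\sys X}(\sigma)$, and hence what these links look like.
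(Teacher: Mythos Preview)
Your construction $\widehat{X_{\mathrm{Dav}}}$ is exactly the paper's $\sys X^f_D$, so the setup is correct. But you have misidentified where the work lies, and the case analysis you describe as ``the main obstacle'' is unnecessary.

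The key lemma you are missing is Haglund's observation, recorded in the paper as Proposition~\ref{lem:face}: if a flag complex $Y$ is $k$--large, then its face complex $\mathcal F(Y)$ is $k$--large (and hence systolic if $Y$ is). Since $\sys X$ is systolic by Theorem~\ref{thm:dim3partCoxeter}, the face complex $\mathcal F(\sys X)$ is systolic. Your $\widehat{X_{\mathrm{Dav}}}$ is a \emph{full} subcomplex of $\mathcal F(\sys X)$, so each of its vertex links is a full subcomplex of the corresponding link in $\mathcal F(\sys X)$, and full subcomplexes of $6$--large complexes are $6$--large. That is the entire argument for $6$--largeness of the links --- one sentence, no case analysis over types of $\sigma$, no appeal to Lemmas~\ref{le:gamma*} or~\ref{lem:gammatilde}, and no need to invoke the convexity package of Section~\ref{Sec_dim3}.

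What actually requires care is simple-connectedness, and here your sketch is too vague to be convincing. The paper instead proves the stronger statement that $\widehat{X_{\mathrm{Dav}}}$ is \emph{contractible}, by showing it is homotopy equivalent to $\mathcal F(\sys X)$. The point is that each simplex of $\mathcal F(\sys X)$ lying in the complement $\mathcal F(\sys X)\setminus\widehat{X_{\mathrm{Dav}}}$ corresponds to a simplex $\sigma$ of $X$; one checks that its link in $\mathcal F(\sys X)$ is contractible (a cone if $\dim\sigma>0$, and homotopy equivalent via Lemma~\ref{le:collaps} to $\mathcal F\big(\lk_{\sys X}(\sigma)\big)$ if $\sigma$ is a vertex, which is systolic by Lemma~\ref{lem:simplyconvertex} and Proposition~\ref{lem:face}, hence contractible). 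Removing open stars with contractible links, top dimension first, preserves homotopy type. This is where Lemma~\ref{lem:simplyconvertex} is actually used --- not for the $6$--largeness of links in $\widehat{X_{\mathrm{Dav}}}$, but for the contractibility step.
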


\begin{defin}
Let $X$ be a simplicial complex. The \emph{face complex} $X^f$ of $X$ is the following simplicial complex. The vertex set of $X^f$ is the set of simplices of $X$. A set of vertices of $X^f$ spans a simplex of $X^f$, if the corresponding simplices of $X$ are all contained in a common simplex of $X$.
\end{defin}

Haglund observed the following.

\begin{prop}[{\cite[Prop B.1]{JS3}}]
\label{lem:face}
If $X$ is $k$--large, then its face complex $X^f$ is $k$--large. Consequently, if $X$ is systolic, then $X^f$ is systolic, since it admits a deformation retraction to $X$.
\end{prop}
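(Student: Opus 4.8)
The plan is to establish the first statement — that $k$--largeness of $X$ implies $k$--largeness of $X^f$ — and then deduce the systolic assertion from it together with the existence of a deformation retraction $X^f\to X$. For the first part, I would look for a way to compare shortest full cycles in $X^f$ with structures in $X$, much in the spirit of Lemmas~\ref{le:collaps}, \ref{le:gamma*} and~\ref{lem:gammatilde}. Concretely, there is a natural simplicial map $p\colon X^f\to X'$ onto the barycentric subdivision $X'$ of $X$ sending the vertex of $X^f$ corresponding to a simplex $\sigma$ of $X$ to the barycenter of $\sigma$; this map is a homotopy equivalence. One should check that $X^f$ is flag (a clique of simplices pairwise lying in a common simplex all lie in a single simplex, using that being contained in a common simplex is governed by the poset of faces of that simplex) and that $X'$ is flag as well. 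The key combinatorial point is then: a full cycle in $X^f$ of length $<k$ would project, after a barycentric comparison of lengths, to a homotopically trivial loop in $X'$, hence to a null-homotopic loop in $X$; expressing $X$ being $k$--large in terms of filling such loops by disc diagrams without interior vertices should force a backtrack or a diagonal, contradicting fullness.

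More carefully, I would argue as follows. Let $\gamma$ be a shortest full cycle in $X^f$ with vertices $\sigma_0,\sigma_1,\dots,\sigma_n=\sigma_0$, and suppose $n<k$. Adjacent $\sigma_i,\sigma_{i+1}$ lie in a common simplex $\tau_i$ of $X$; choosing a vertex $v_i\in \sigma_i\cap\sigma_{i+1}$ (nonempty since, e.g., any vertex of the smaller of the two among $\sigma_i,\sigma_{i+1}$, or more robustly any vertex of $\sigma_i\cap\sigma_{i+1}$ — note this intersection is nonempty because both are faces of $\tau_i$ and one can take a common vertex only if the intersection is nonempty, which fails in general) — here is the first subtlety, and I would instead pass to $X'$: map $\sigma_i\mapsto \hat\sigma_i$, the barycenter, and insert between consecutive barycenters $\hat\sigma_i,\hat\sigma_{i+1}$ the barycenter $\hat\tau_i$ of a common simplex $\tau_i$, obtaining a closed edge-path $\delta$ in $X'$ of length $2n$ that is locally embedded after removing repetitions. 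Since $X'$ is simply connected whenever $X$ is, but for the $k$--largeness statement we do not have simple connectivity, so instead I would work with the fact that $\delta$ bounds a singular disc of bounded combinatorial size coming from $\gamma$ bounding a disc in $X^f$. The cleanest route: $\gamma$, having length $<k$ and $X^f$ flag, bounds a disc diagram in $X^f$; push it forward to $X'$; then use that $X$ being $k$--large means closed edge-paths of length $<k$ in $X$ bound disc diagrams with no interior vertices, hence locally embedded ones backtrack — and translate a backtrack of the image path into a diagonal of $\gamma$ via the poset structure (if $\sigma,\sigma''$ are both contained in a common simplex with an intermediate $\sigma'$, one checks $\sigma$ and $\sigma''$ themselves lie in a common simplex, giving the forbidden diagonal).

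The main obstacle I anticipate is precisely this translation step: making rigorous how a short full cycle in $X^f$ yields, via the map to $X$ or $X'$, a short edge-path in $X$ whose "backtracking" or "diagonal" lifts back to a diagonal of the original cycle in $X^f$. The delicate points are (a) controlling lengths under the barycentric comparison so that $|\delta|<k$ is actually available when $n<k$ — one may need the sharper bound coming from the structure of $\gamma^*$-type arguments as in Lemma~\ref{le:gamma*}, or to invoke directly that a closed edge-path of length $<k$ in a $k$--large complex is null-homotopic and backtracks; and (b) verifying that the relation "$\sigma$ and $\sigma''$ are contained in a common simplex of $X$" is transitive enough along the disc diagram to produce a genuine edge of $X^f$ between non-consecutive vertices of $\gamma$. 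Once the first statement is in hand, the second is immediate: a systolic complex is $k$--large for $k=6$ and simply connected; $X^f$ is then $6$--large by the first part, and it is simply connected because it deformation retracts onto $X$ (collapsing each simplex of $X^f$ on the faces of a given simplex of $X$ onto the vertex of $X^f$ corresponding to that simplex, or dually via the map to $X'$), hence $X^f$ is systolic.
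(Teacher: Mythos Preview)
The paper does not give its own proof of this proposition; it simply cites \cite[Prop~B.1]{JS3} and attributes the observation to Haglund. So there is no argument in the paper to compare yours against.

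That said, your proposal is not a proof but a sequence of abandoned sketches, and the gaps you yourself flag are genuine. The barycentric detour is a dead end: passing from a length-$n$ cycle in $X^f$ to a length-$2n$ path in $X'$ loses exactly the inequality you need, since $2n<k$ is not guaranteed when $n<k$. Your alternative of pushing a disc diagram forward does not help either, because $k$--largeness of $X$ says nothing about filling loops in $X'$ or about loops of length $2n$. Most importantly, obstacle~(b) is the crux and you do not resolve it: if you pick vertices $v_i\in\sigma_i$, adjacency (or equality) of $v_i$ and $v_j$ in $X$ does \emph{not} imply that $\sigma_i\cup\sigma_j$ is a clique, so a diagonal of the projected path does not lift to a diagonal of $\gamma$. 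Your parenthetical claim that $\sigma_i\cap\sigma_{i+1}\neq\emptyset$ is false in general (two disjoint faces of a common simplex are adjacent in $X^f$), as you partly notice.

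What actually works is a direct reduction in $X^f$ itself rather than a projection. One takes a full cycle $\gamma=(\sigma_0,\dots,\sigma_{n-1})$ in $X^f$ of minimal length $n<k$, and among those one minimises $\sum\dim\sigma_i$. If some $\sigma_j$ is not a vertex, replace it by a single vertex $v\in\sigma_j$; the new sequence is still a cycle (since $v$ lies in the cliques $\sigma_{j-1}\cup\sigma_j$ and $\sigma_j\cup\sigma_{j+1}$), and either it is full --- contradicting minimality of $\sum\dim\sigma_i$ --- or it acquires a diagonal, which combined with an arc of $\gamma$ yields a shorter full cycle, contradicting minimality of $n$. Hence every $\sigma_i$ is a vertex of $X$, and $\gamma$ is a full cycle in $X\subset X^f$ of length $<k$, contradicting $k$--largeness of $X$. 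Your outline never arrives at this replacement argument. The second sentence of the proposition then follows exactly as you say, from the deformation retraction $X^f\to X$.
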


\begin{proof}[Proof of Theorem~\ref{thm:dim2part2}]
Let $X$ be a building of rank $3$ type as in Theorem~\ref{thm:dim2part2}. If $W$ is finite, then the Davis realization of $X$ is finite as well, and
the simplex obtained by spanning simplices on all the vertex sets of the Davis realization is its systolization. Otherwise, the Davis realization of $X$ is the barycentric subdivision $X'$ of $X$. Let $\sys X$ be the systolization of $X$ from Construction~\ref{def:sys2}, except that in the case where all the exponents in the Coxeter presentation are $\geq 3$, we take $\sys X=X$. Let $\sys X^f$ be the face complex of $\sys X$. The embedding $X\subset\sys X$ induces an embedding $X^f\subset\sys X^f$, and composing with $X'\subset X^f$ we obtain a natural embedding $X'\subset \sys X^f$, which is a quasi-isometry. By Proposition~\ref{lem:face}, the face complex $\sys X^f$ is a systolization of $X'$.
\end{proof}

\begin{proof}[Proof of Theorem~\ref{thm:dim3part2}]
Let $X$ be a building of rank $4$ type as in Theorem~\ref{thm:dim3part2}. The Davis realization $X_D$ of $X$ is the $2$--dimensional subcomplex of the barycentric subdivision $X'$ of $X$ obtained by removing the open stars of all the vertices of $X'$ corresponding to the vertices of $X$. Let $\sys{X}$ be the systolization from Construction~\ref{def:sys3}, except that in the case where all the exponents in Coxeter presentation are $\geq 3$, we take $\sys X=X$. Let $\sys X^f$ be the face complex of $\sys X$, and let $\sys X^f_D$ be the subcomplex of $\sys X^f$ obtained by removing the open stars of all the vertices corresponding to the vertices of $X$. We will prove that $\sys X^f_D$ is a systolization of $X_D$.

By Proposition~\ref{lem:face}, the face complex $\sys X^f$ is systolic, whence all the vertex links of its full subcomplex $\sys X^f_D$ are $6$--large. To prove that $\sys X^f_D$ is systolic it remains to prove that it is contractible. We first claim that the link in $\sys X^f$ of any closed simplex in $\sys X^f-\sys X^f_D$ (which comes from a simplex $\sigma$ of $X$) is contractible. If $\sigma$ is not a vertex, then its link in $\sys X^f$ is a cone, coned off by the vertex corresponding to $\sigma$, thus it is contractible. If $\sigma$ is a vertex, then its link $\lk_{\sys X^f}(\sigma)$ can be collapsed as in Lemma~\ref{le:collaps} to $\big(\lk_{\sys X}(\sigma)\big)^f$. We showed in Lemma~\ref{lem:simplyconvertex} that $\lk_{\sys X}(\sigma)$ is simply-connected, and since it is $6$--large, it is systolic. Thus by Proposition~\ref{lem:face} its face complex $\big(\lk_{\sys X}(\sigma)\big)^f$ is systolic, in particular contractible. Thus $\lk_{\sys X^f}(\sigma)$ is contractible,
justifying the claim. In view of the claim, $\sys X^f_D$ is obtained from $\sys X^f$ by repeatedly removing open stars of simplices with contractible links (starting from maximal dimension; note that in the process we also reduce the links of the simplices of smaller dimension but only by removing open stars of simplices with the same contractible links, so contractibility is preserved). Thus $\sys X^f_D$ is homotopy equivalent to $\sys X^f$. This completes the proof of the fact that $\sys X^f_D$ is contractible, hence systolic.

The action of the group of type preserving automorphisms of $X$ extends to $\sys X^f_D$. It remains to verify that the simplicial embedding $\psi\colon X_D\rightarrow \sys X^f_D$ is a quasi-isometry. To do that we construct the following quasi-inverse $\phi$ on the vertex set of $\sys X^f_D$. We first define $\phi$ on the set $E$ of these vertices of $\sys X^f_D$ that correspond to edges of $\sys X$. Let $e\in E$ be an edge of $\sys X$. If $e$ is an edge of $X$, then let $\phi(e)$ be the barycenter of $e$. Otherwise, if $e$ is an edge of friends, then let $\phi(e)$ be the barycenter of the edge $ab$ from Corollary~\ref{cor:friends}. Finally, if $e$ is an edge of acquaintances, then let $\phi(e)$ be the barycenter of the edge $ad,bc,$ or $bd$, from Corollary~\ref{cor:acquaint}. The map $\phi$ is Lipschitz. Indeed, it suffices to check that a pair of edges of triangle in $\sys X$ is sent to a pair of points at bounded distance. This follows from Lemmas~\ref{lem:cloopsI}(i,iii) and~\ref{lem:cloopsII}(i,iii),
together with Remark~\ref{rem:extra}. A vertex of $\sys X^f_D$ outside $E$ corresponds to a simplex $\sigma$ of $\sys X$ of dimension $\geq 2$. We define $\phi(\sigma)=\phi(e)$ for an arbitrary edge $e\subset \sigma$. It is easy to see that $\psi$ restricted to the vertex set of $X_D$ and $\phi$ are quasi-inverses.
\end{proof}

\section{Quotient construction}
\label{sec:TJan}
In this section we present a construction, suggested to us by Januszkiewicz, which might give rise to new systolic groups of cohomological dimension $3$. In order to illustrate the method, we first recall the following. Let $W$ be a Coxeter group of rank $4$ with all exponents finite and $\geq 3$. Then $W$ is the fundamental group of the simplex of groups $\mathcal{W}$ over the tetrahedron $T$, where all the local groups $W_I, \ I\subsetneq T$ are special Coxeter subgroups of $W$. If $I$ is a triangle, then $W_I=\Z_2$. If $I$ is an edge, then $W_I$ is a finite dihedral group. However, the vertex groups are infinite.

Let $X_i$ be the Coxeter complex of the vertex group $W_{i}$. The complex $X_i$ is systolic and for any normal finite index subgroup $W'_{ {i}}\subset W_{ i}$ avoiding the finite set (up to conjugacy) of elements with small translation length, the quotient $X_i'=W'_{ {i}}\backslash X_i$ is $6$--large and finite. Consider the simplex of groups $\mathcal{W}'$ over $T$ obtained from $\mathcal{W}$ by replacing each $W_{ {i}}$ with $W_{ {i}}'$. Then $\mathcal{W}'$ is \emph{locally $6$--large}, that is all vertex link developments are $6$--large, since they coincide with $X'_i$. By \cite[Thm~6.1]{JS}, the simplex of groups $\mathcal{W}'$ is developable. This means that $W_{ {i}}/W_{ {i}}'$ embed in the quotient $W'$ of $W$ by the normal closure of all the $W_{ {i}}'$. The group $W'$ acts geometrically on the development $T\times W'/\sim$, which is systolic. In particular $W'$ is a systolic group of cohomological dimension~$3$.

We would like to repeat this construction for a Coxeter group $W$ with an exponent $2$ as in Theorem~\ref{thm:dim3}. The group $W$ acts on the systolization $\sys X$ of the Coxeter complex $X$ from Construction~\ref{def:sys3}. The quotient complex $Y=W\backslash \sys X$ is equipped with the complex of groups structure $\mathcal Y$ coming from the stabilizers. The only infinite local groups are the vertex groups $W_{  i}$ at these vertices that come from the orbits of the original vertices of $X$. Each $W_{  i}$ acts on the systolic link $\sys X_i$ of the appropriate vertex in $\sys X$. Hence again for any normal finite index subgroup $W'_{ {i}}\subset W_{  i}$ avoiding a finite set of elements with small translation length, the quotient $\sys X_i'=W'_{ {i}}\backslash \sys X_i$ is $6$--large and finite. We form a complex of groups $\mathcal Y'$ over $Y$ replacing all $W_{  i}$ by $W'_{  i}$.

\begin{quest}
\label{quest:TJan}
Is the fundamental group of $\mathcal Y'$ systolic?
\end{quest}

Observe that the problem is that the action of $W$ on $\sys X$ has inversions, i.e.\ it stabilizes some simplices without fixing its vertices.
Hence the complex $Y$ has simplices that are not coming from the orbits of the original simplices of $\sys X$, but rather the simplices of its barycentric subdivision. Thus the combinatorial structure on the local developments differs from that of $\sys X'_i$. The property of being $6$--large is not inherited under subdivision, thus we cannot apply \cite[Thm~6.1]{JS}.

Note that if $\mathcal Y'$ were developable, then we could combine appropriate simplices of the subdivision to turn the development into a systolic complex and prove that $\pi_1 \mathcal Y'$ is systolic. Thus Question~\ref{quest:TJan} reduces to the question of developability of what we could call \emph{locally $6$--large orbi-complexes of groups}. The proof should not differ much from the proof of \cite[Thm~6.1]{JS}, but would require a reworking that is outside the scope of the current article.

\section{Appendix: CAT(1) metric on rank $3$ buildings}
\label{sec:app}

Let $W$ be an infinite triangle Coxeter group with finite exponents $(l,k,m)$ distinct from $(2,4,4), (2,4,5)$ and $(2,5,5)$. Assume $k\geq 3$ and either $l,m\geq 3$ or $l=2,m\geq 6$.
Let $X$ be the Coxeter realization of a building of type $W$. We equip each triangle of $X$ with the spherical $A_3$ metric, of angle $\frac{\pi}{2}$ at the vertex of type $\bf l$ and angle $\frac{\pi}{3}$ at the vertices of type $\bf k$ and $\bf m$. We consider the quotient pseudometric $d$ on $X$, see \cite[\S~I.5.19]{BH}. By \cite[Thm~I.7.19]{BH}, we have that $d$ is a complete geodesic metric on $X$. Moreover, since the vertex links of $X$ are graphs of girth $\geq 2\pi$, by \cite[Thm~II.5.2]{BH} the metric $d$ is locally CAT(1).

\begin{thm}
\label{thm:app:CAT(0)}
The metric $d$ on $X$ is $\mathrm{CAT(1)}$.
\end{thm}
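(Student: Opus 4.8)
The plan is to verify the hypotheses of the standard criterion for a geodesic space to be globally $\mathrm{CAT}(1)$, namely that it is locally $\mathrm{CAT}(1)$ and satisfies an appropriate convexity or nonexistence-of-short-geodesic-bigons condition. Since we already know (from the discussion preceding the statement) that $d$ is locally $\mathrm{CAT}(1)$, by \cite[Thm~II.4.13 and its $\mathrm{CAT}(\kappa)$ analogue]{BH} it suffices to show that $(X,d)$ contains no closed geodesic of length $<2\pi$. First I would recall the structure of $X$: the apartments are Coxeter realizations of the triangle group $W$, equipped with the spherical $A_3$ metric on each triangle; these apartments are themselves $\mathrm{CAT}(1)$ precisely under the excluded-types hypothesis (the classical fact that the $(l,k,m)$ spherical triangle realization is $\mathrm{CAT}(1)$ iff $\frac1l+\frac1k+\frac1m\le 1$, with the degenerate equality cases $(2,3,6),(2,4,4),(3,3,3)$ and the listed spherical-ish exceptions handled by the extra exponent restrictions). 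So the work is to bootstrap from apartments to all of $X$.

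The key steps, in order, would be: (1) Suppose $\gamma$ is a closed geodesic in $X$ with $\ell(\gamma)<2\pi$. Using the building retraction $\mathrm{ret}_{\mathcal A,C}\colon X\to\mathcal A$ onto an apartment $\mathcal A$ containing an initial chamber $C$ of $\gamma$, and the fact that such retractions are $1$-Lipschitz and restrict to isometries on chambers, argue that $\gamma$ must in fact be contained in a single apartment — the standard argument being that a local geodesic that leaves an apartment can be ``folded'' by the retraction to a path of the same length whose image detects a shorter configuration, contradicting minimality, or more cleanly, that any two points of $X$ lie in a common apartment and geodesics between them stay in it by convexity of apartments. (2) Once $\gamma\subset\mathcal A$, invoke that $\mathcal A$ with the induced metric is $\mathrm{CAT}(1)$ under our hypotheses (this is where the exclusion of $(2,4,4),(2,4,5),(2,5,5)$ and the condition $l,m\ge 3$ or $l=2,m\ge 6$ enters), so $\mathcal A$ has no closed geodesic shorter than $2\pi$, a contradiction. (3) Conclude global $\mathrm{CAT}(1)$ and, if needed, record the $\pi$-convexity statement (Theorem~\ref{thm:app:convex}) as a consequence via the usual fact that in a $\mathrm{CAT}(1)$ space balls of radius $<\pi/2$ and more refined subsets are convex.

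The main obstacle I expect is Step (1): carefully justifying that a short closed geodesic cannot genuinely wander between apartments. The subtlety is that geodesics in $X$ need not stay in an apartment a priori — that is exactly the content one must prove — and the retraction $\mathrm{ret}_{\mathcal A,C}$, while $1$-Lipschitz, is not injective, so one must argue that a minimal-length closed geodesic cannot be shortened or, alternatively, set up a ``no bigon'' argument: take the two points of $\gamma$ realizing an antipodal-type pair, embed them in a common apartment $\mathcal A'$, and show the two geodesic segments of $\gamma$ between them both lie in $\mathcal A'$ by a convexity/uniqueness argument for geodesics of length $<\pi$ in the locally $\mathrm{CAT}(1)$ space, reducing everything to the apartment $\mathcal A'$ which is $\mathrm{CAT}(1)$. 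Handling the case $\ell(\gamma)\ge\pi$ (where local uniqueness of geodesics between the ``opposite'' points may fail) requires the standard trick of cutting $\gamma$ into two arcs each of length $<\pi$ and applying the argument to each. The remaining steps are essentially citations to \cite{BH} plus the known classification of $\mathrm{CAT}(1)$ spherical triangle complexes, so they should be routine.
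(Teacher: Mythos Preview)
Your proposal has the logical structure inverted. The reduction from the building to a single apartment is the easy part: this is a standard fact (the paper cites \cite[p.~338--340]{Davis}) that a building with a piecewise spherical metric is $\mathrm{CAT}(1)$ if and only if its apartments are. So Step~(1), which you flag as the main obstacle, is a one-line citation.

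The genuine gap is Step~(2). You treat ``apartments are $\mathrm{CAT}(1)$'' as a classical input, invoking the condition $\frac1l+\frac1k+\frac1m\le 1$. But that classical criterion applies to the \emph{natural} metric where the triangle has angles $\frac{\pi}{l},\frac{\pi}{k},\frac{\pi}{m}$. Here the metric is deliberately non-standard: every triangle carries the spherical $A_3$ metric with angles $\frac{\pi}{2},\frac{\pi}{3},\frac{\pi}{3}$, regardless of $(l,k,m)$. So the apartment is a piecewise spherical complex whose local angles do \emph{not} match the Coxeter combinatorics, and there is no off-the-shelf theorem certifying it $\mathrm{CAT}(1)$. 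Proving this is precisely the content of the theorem.

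The paper's actual argument works entirely inside a single apartment. It assumes a shortest nonshrinkable closed local geodesic $\gamma$ of length $<2\pi$ (via Bowditch's criterion \cite{Bow} and Arzel\`a--Ascoli), then invokes the Elder--McCammond classification \cite[Ex~11.2]{EC} of the possible annular, M\"obius, and necklace galleries that can carry such a $\gamma$ in an $A_3$-type complex. Each gallery type is then ruled out combinatorially: M\"obius by orientability, annular galleries by $6$--largeness or Corollary~\ref{cor:convexstar}, and necklace galleries by reducing to edge-paths and applying Lemma~\ref{le:2kloops}, Lemma~\ref{lem:length 6 loop}, or the systolicity established in Section~\ref{Sec:dim2}. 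None of this is replaceable by a citation; the hypotheses on $(l,k,m)$ enter exactly in these case analyses.
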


In the proof we need the following result.

\begin{lemma}
\label{lem:length 6 loop}
Let $\gamma$ be a full cycle of length $6$ in a systolic complex $X$. Then there exists a vertex of $X$ adjacent to all the vertices of $\gamma$.
\end{lemma}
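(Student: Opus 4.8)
The plan is to use the local-to-global structure of systolic complexes via Elsner's or Januszkiewicz–Świątkowski's convexity theory. Recall that in a systolic complex $X$, balls are convex and the notion of a \emph{locally geodesic} disc diagram is well-behaved: any full cycle $\gamma$ bounds a minimal disc diagram, and a full cycle of length $6$ in particular bounds a disc diagram with no internal vertices (since $X$ is $6$--large, hence $7$--large? — no, only $6$--large, so a hexagon need not be filled by a single vertex). So the first thing I would do is invoke the combinatorial Gauss–Bonnet / filling results: by \cite[Thm 4.1(3)]{JS} or the theory of minimal surfaces, $\gamma$ bounds a minimal disc diagram $D\to X$, and a curvature count on $D$ forces $D$ to be a ``flat'' hexagonal disc, i.e.\ the union of $6$ triangles around a single internal vertex, OR a union of fewer triangles. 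The key point I would push is that for a \emph{full} hexagon in a $6$--large complex, the minimal filling must contain exactly one interior vertex $v_0$, and $v_0$ is adjacent to all six boundary vertices.

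Concretely, the main steps in order: (1) Take a minimal disc diagram $f\colon D\to X$ spanned by $\gamma$; minimality means $D$ has no internal vertex of degree $<6$ and the boundary is locally geodesic except where $\gamma$ has no diagonal. (2) Apply the combinatorial Gauss–Bonnet formula $\sum_{v\in \partial D}(3-\deg_{\mathrm{int}}(v)) + \sum_{v\in \mathrm{int}\, D}(6-\deg(v)) = 6$. Since $\gamma$ is a full cycle, each boundary vertex has at most one interior neighbor among the boundary — actually I must be careful: fullness means no diagonals of $\gamma$ in $X$, but $D$ may still have edges. The standard argument (see JS §4 on ``wheels'') shows the only minimal diagram over a full $6$--cycle is the hexagonal wheel $W_6$. (3) Conclude that the image of the central vertex of $W_6$ is a vertex $v\in X$ adjacent to all six vertices of $\gamma$; one checks $f$ is injective on the star of the central vertex using $6$--largeness and flagness.

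The hard part — the step I expect to be the real obstacle — is ruling out degenerate minimal diagrams: a priori the minimal disc diagram over $\gamma$ could be a single triangle with a folded boundary, or two triangles sharing an edge, if $\gamma$ were not embedded or if $f$ identified boundary vertices. So I would first reduce to the case where $\gamma$ is embedded and its image has no ``shortcuts'' (which is exactly what ``full cycle'' gives us), and then argue that any minimal diagram has area exactly $6$ and is the wheel. Alternatively, and perhaps more cleanly, I would argue by contradiction: if no vertex is adjacent to all of $\gamma$, then the minimal diagram $D$ has area $\ge 7$ or has a boundary vertex of interior degree $\ge 2$ connecting two boundary vertices, and in either case Gauss–Bonnet combined with the no-degree-$\le 4$-and-no-bad-pairs condition for minimal diagrams in systolic complexes (the ``$C(6)$'' small cancellation-type condition from \cite{JS}) forces a diagonal of $\gamma$, contradicting fullness. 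Once the wheel structure is established, adjacency of the center to all six boundary vertices, and the fact that this center is a genuine vertex of $X$ (not collapsed), is immediate from flagness of $X$.
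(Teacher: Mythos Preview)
Your approach is essentially the same as the paper's: fill $\gamma$ by a systolic disc diagram, bound its area, and conclude it must be the hexagonal wheel $W_6$ whose central vertex witnesses the lemma. The paper executes this more directly by citing Elsner: \cite[Lem~4.2]{Els} supplies the systolic filling disc $\Delta$, and \cite[Lem~3.4(1)]{Els} gives the area bound $|\Delta|\le 6$ (this is precisely the Gauss--Bonnet count you wrote down). The paper then argues, using fullness of $\gamma$, that for each of the six boundary edges the incident triangle meets $\partial\Delta$ only in that edge; hence the six triangles are distinct, exhaust $\Delta$, and share a common interior vertex. Your worry about ``degenerate'' diagrams is handled by exactly this observation: fullness forbids any triangle of $\Delta$ from having two edges on $\partial\Delta$, which is what rules out the folded or collapsed fillings you were concerned about, without needing a separate case analysis or a contradiction argument.
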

\begin{proof}
By \cite[Lem 4.2]{Els}, there is a simplicial map $F\colon \Delta \rightarrow X$, where $\Delta$ is a systolic disc and $F_{|\partial \Delta}=\gamma$.
By \cite[Lem 3.4(1)]{Els}, the disc $\Delta$ consists of at most $6$ triangles. On the other hand, since $\gamma$ is full, for any edge $e$ of the $6$ edges of $\partial \Delta$, the triangle of $\Delta$ containing $e$ intersects $\partial \Delta$ exactly in~$e$. Thus these $6$ triangles are distinct, cover whole $\Delta$, and every vertex of $\partial \Delta$ is contained in exactly two of them. Consequently, they have a common vertex $v\notin\partial \Delta$. Then $F(v)$ is adjacent to all the vertices of $\gamma$.
\end{proof}

\begin{proof}[Proof of Theorem~\ref{thm:app:CAT(0)}]
By \cite[p.\ 338--340]{Davis}, we can restrict to the case where $X$ is a single apartment.

By \cite[Thm~3.1.2]{Bow} it suffices to show that there is no closed path in $X$ of length $< 2\pi$ that is \emph{nonshrinkable}, i.e.\ not freely homotopic to the trivial path through paths of length $<2\pi$. For contradiction suppose that there exists a nonshrinkable path. By the cocompactness of the action of $W$ on $X$ and by Arzel\'a--Ascoli theorem, the infimum of the lengths of such paths is attained by a path $\gamma$. Such $\gamma$ is embedded by \cite[Thm~3.1.1]{Bow}. Moreover, $\gamma$ is a local geodesic.

We assume that the reader is acquainted with the method of
\cite[Ex~11.2]{EC} to classify local geodesics in piecewise
spherical complexes of our type. A~local geodesic of length
$<2\pi$ is contained in one of the listed in \cite[p.157]{EC}
annular galleries (see Figure~\ref{fig_annular}), M\"obius
galleries 
and necklace galleries build of the short edge $A$, the long
edge $B$, and beads $C,D,E$ (see Figure~\ref{fig_vertex}).
Since $X$ is oriented, M\"obius galleries are excluded. The
first listed annular gallery is excluded since $X$ is a
simplicial complex. The second listed annular gallery is
excluded since one of the boundary components of the annulus
forms a cycle of length $4$ of type $\bf{lklk}$: If $l\geq 3$,
then this contradicts the fact that $X$ is systolic, hence
$6$--large. If $l=2$, then this contradicts
Corollary~\ref{cor:convexstar}.

\begin{figure}[h]
 \begin{center}
{\input{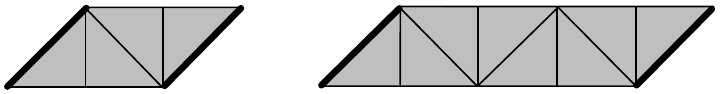_t}}
\caption[rectangular]{Annular galleries. Heavy edges are identified.}
\label{fig_annular}
\end{center}
\end{figure}


\begin{figure}[h]
 \begin{center}
\input{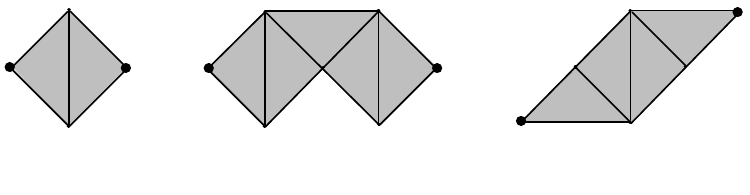_t}
\caption[stargraph]{Beads.}
\label{fig_vertex}
\end{center}
\end{figure}

It remains to consider the case where $\gamma$ is contained in
a necklace gallery~$\mathcal{G}$. If $\mathcal{G}$ has a bead
$D$ or $E$, then we can homotope the subpath of $\gamma$ in
that bead into the boundary of the bead through paths of the
same length. Hence we can assume that $\mathcal G$ consists
only of edges $A$ and $B$ and the bead~$C$.

Consider first the case where $l\geq 3$. Then $X$ is systolic. Let $\Delta\subset X$ be the bounded complementary component of the Jordan curve $\gamma$ in the plane $X$. We consider $\Delta$ as a subcomplex of $X$ with cells subdivided along $\gamma$. Then $\Delta$ is systolic, in particular $6$--large.
The combinatorial length $|\partial \Delta|$ coincides with the number of maximal simplices of $\mathcal G$, which is $\leq 6$ by \cite[Ex~11.2]{EC}.
If $\partial \Delta$ is not full in $\Delta$, then three consecutive vertices of $\partial \Delta$ form a triangle. This contradicts the fact that $\gamma$ is a local geodesic. If $\partial \Delta$ is full in $\Delta$, then since $\Delta$ is $6$--large, we have $|\partial \Delta|=6$. By Lemma~\ref{lem:length 6 loop}, the path $\partial \Delta$ lies in the star of a vertex, which also contradicts the fact that $\gamma$ is a local geodesic.

We now treat the case where $l=2$. First suppose that $\mathcal
G$ consists only of edges $A$ and $B$, i.e.\ $\gamma$ is an
edge-path. By \cite[Ex~11.2]{EC} it has combinatorial length
$|\gamma|\leq 6$. Since $\gamma$ is a local geodesic, three
consecutive vertices of $\gamma$ cannot form a triangle. Thus
$\gamma$ cannot have a length $2$ subpath of type $\bf{k-2-m}$.
In particular $|\gamma|$ is even.

Let $\sys X$ be the systolization of $X$ from Construction~\ref{def:sys2}. If $|\gamma|=4$, then since $\sys X$ is systolic (Theorem~\ref{thm:dim2partCoxeter}), in particular $6$--large, there are two vertices of type $\bf k$ on $\gamma$ connected by an edge in $\sys X$. By Corollary~\ref{cor:convexstar}, entire $\gamma$ is contained in the star of a vertex of type $\bf 2$ in $X$, which contradicts the fact that it is a local geodesic.

If $\gamma$ is a cycle of length $6$, then it is of type $\bf {2k2k2k}$ or $\bf{2m2m2m}$. The first type is excluded by Lemma~\ref{le:2kloops}. If $\gamma$ is of the second type, then since $\sys X$ is systolic, Lemma~\ref{lem:length 6 loop} implies that $\gamma$ is contained in the star of a vertex of type $\bf k$, contradicting the fact that it is a local geodesic.

Finally suppose that $\mathcal G$ contains a bead $C$. To be locally geodesic at the vertex where it leaves $C$, the path $\gamma$ must enter another bead $C$. Consequently, $\mathcal G$ is formed entirely of beads $C$. Then $\gamma$ is also a local geodesic in the piecewise Euclidean or hyperbolic metric for which $X$ is isometric with $\R^2$ or $\Hyp^2$, which is a contradiction.
\end{proof}

Having established that $X$ is CAT(1), we now prove a criterion
for $\pi$--convexity of its subcomplex. A subcomplex $Y\subset
X$ is \emph{$\pi$--convex} if for any points $x,x'\in Y$ at
distance $d(x,x')<\pi$, the unique geodesic $xx'$ is contained
in~$Y$.

We need the following consequence of \cite[Ex~11.2]{EC}. We
discuss galleries outside $Y$ joining $Y$ to itself. Such a
gallery might start (and end) with a vertex of $Y$ or an edge
of $Y$. In particular it might start (and end) with what we
will call \emph{the bead $\sqrt{C}$}, which is a single
triangle whose long edge $B$ is contained in $Y$.

\begin{cor}
\label{cor:galleries}
Let $X$ be a complex whose vertices are of two types, $\bf l,k$, and which is build of triangles of type $\bf{k-l-k}$ with the spherical $A_3$ metric, of angle $\frac{\pi}{2}$ at the vertex of type $\bf l$ and angle $\frac{\pi}{3}$ at the vertices of type $\bf k$. Let $Y\subset X$ be a subcomplex. Let $\gamma$ be a path disjoint from $Y$ except at its endpoints, which cannot be perturbed to a path of smaller length with that property.
If $\gamma$ has length~$<\pi$, then it is contained in one of \emph{rectangular} galleries listed in Figure~\ref{fig_3} and necklace galleries
$B, B^2, BA, BA\sqrt C, A,A^2, A^3, A^2\sqrt C, A\sqrt C,$ $AC, \sqrt{C},$ $\sqrt{C}\sqrt{C}, \sqrt C C,C$ (see Figure~\ref{fig_4}).
\end{cor}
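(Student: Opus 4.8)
\textbf{Proof plan for Corollary~\ref{cor:galleries}.}
The plan is to reduce the statement to the classification of local geodesics in piecewise spherical complexes from \cite[Ex~11.2]{EC}, exactly as in the proof of Theorem~\ref{thm:app:CAT(0)}, but adapted to the relative setting of a path that is disjoint from $Y$ except at its endpoints. First I would note that since $\gamma$ cannot be perturbed to a shorter path with the stated property, it is a local geodesic in the open region $X\setminus Y$, and at each of its two endpoints it meets $Y$ either at a vertex or transversally along an edge. Hence $\gamma$ is a local geodesic in the interior, and the combinatorial gallery $\mathcal{G}$ carrying it is one of the galleries appearing in \cite[Ex~11.2]{EC}, with the only modification that the two \emph{ends} of $\mathcal{G}$ are allowed to be truncated: instead of a full annular or necklace gallery, the endpoints of $\gamma$ may sit on $Y$ in the middle of a bead, which is precisely what the half-bead $\sqrt{C}$ encodes.

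The key steps, in order, are the following. (1) Recall that the complexes considered here consist only of triangles of type $\bf{k}$--$\bf{l}$--$\bf{k}$, so in the language of \cite{EC} the only edges are the short edge $A$ (type $\bf{kk}$, opposite the $\frac{\pi}{2}$-vertex) and the long edge $B$ (type $\bf{kl}$), and the only bead of the necklace type is $C$; beads $D,E$ do not occur since those require other vertex valences. (2) Since $X$ is orientable (it embeds in a building apartment as in Theorem~\ref{thm:app:CAT(0)}), M\"obius galleries are excluded. (3) Enumerate the annular/rectangular galleries of length $<\pi$ from the list on \cite[p.~157]{EC}, and observe that with only the edges $A,B$ available these are exactly the rectangular galleries in Figure~\ref{fig_3}. (4) Enumerate the necklace galleries of length $<\pi$ built from $A$, $B$, and $C$; a direct bookkeeping using the angle-sum bound (each bead $C$ contributes angle $\frac{2\pi}{3}$ at its apex and each edge an angle $\pi$ between consecutive triangles, so the total turning must stay below $\pi$) leaves only the short words $B, B^2, BA, A, A^2, A^3, AC, C$. (5) Finally account for the truncation at $Y$: whenever an end of the gallery is a bead $C$ whose long edge $B$ lies in $Y$, replace $C$ by $\sqrt{C}$; running through the admissible words and truncating zero, one, or two ends produces the remaining entries $BA\sqrt{C}, A^2\sqrt{C}, A\sqrt{C}, \sqrt{C}, \sqrt{C}\sqrt{C}, \sqrt{C}C$.

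I expect the main obstacle to be step~(5): getting the bookkeeping of truncated ends exactly right, so that the final list is neither redundant nor missing a case. One must be careful that a half-bead $\sqrt{C}$ still contributes a genuine piece of $\gamma$ of positive length (so it is not the same as having the endpoint at a vertex), that the two ends are truncated independently, and that truncation does not create a new shortcut into $Y$ contradicting minimality. The interior classification, by contrast, is a direct citation of \cite[Ex~11.2]{EC} specialized to the present triangle type, so the real content is verifying that no admissible gallery has been overlooked once the relative boundary conditions are imposed.
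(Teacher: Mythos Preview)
Your approach differs substantively from the paper's, and it has a genuine gap: you are trying to apply \cite[Ex~11.2]{EC} directly to an \emph{open} path $\gamma$, but that classification is for \emph{closed} local geodesics of length $<2\pi$. The paper bridges this by a \emph{doubling trick}: it observes that by minimality $\gamma$ meets $Y$ perpendicularly when it ends in the interior of an edge, then forms the double $X\cup X'$ of $X$ along the simplices $\sigma,\tau$ containing the endpoints, so that $\gamma\cup\gamma'$ becomes a closed local geodesic of length $<2\pi$ in the double. Only then does \cite{EC} apply, and the extra symmetry (swapping the two copies) is exactly what constrains the gallery: a gallery in $X\cup X'$ must be the double of a gallery in $X$, M\"obius galleries are excluded because they are not doubles, annular galleries are doubles of the rectangular galleries, and the beads $D,E$ are excluded because they admit no involution interchanging their endpoints while fixing an edge pointwise.

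Two of your specific steps fail as written. In step~(1) you exclude beads $D,E$ on the grounds that ``those require other vertex valences''; this is incorrect---$D$ and $E$ are built from the same $A_3$ triangles (see the paper's figure of beads), and the paper's proof of Theorem~\ref{thm:app:CAT(0)} handles them by homotoping into the boundary, while here they are excluded only via the doubling symmetry. In step~(2) you invoke orientability of $X$, but Corollary~\ref{cor:galleries} makes no such assumption on $X$; again, M\"obius galleries are ruled out because they cannot arise as doubles, not because $X$ is oriented. Your step~(4) bookkeeping (``total turning must stay below $\pi$'') is also not the right constraint: the bound is on the length of $\gamma$, and the way \cite{EC} extracts the finite list is through the combinatorics of the doubled closed gallery, not through an angle-sum on the open path. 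Once you insert the doubling argument, steps~(3)--(5) collapse into the single observation that the listed galleries are precisely those whose doubles appear in \cite[p.~157]{EC}.
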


\begin{figure}[h]
 \begin{center}
{\input{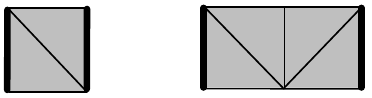_t}}
\caption[rectangular]{Rectangular galleries. Heavy edges are contained in $Y$.}
\label{fig_3}
\end{center}
\end{figure}

\begin{figure}[h]
 \begin{center}
\input{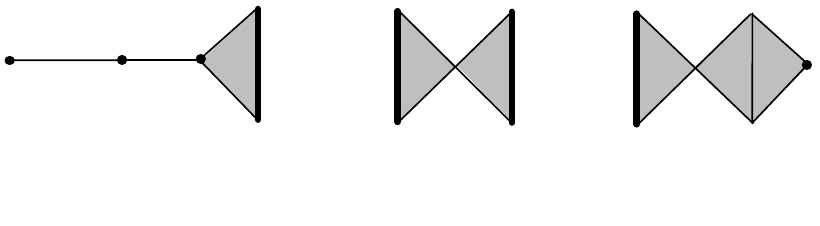_t}
\caption[stargraph]{Some of the necklace galleries in Corollary~\ref{cor:galleries}.}
\label{fig_4}
\end{center}
\end{figure}

\begin{proof}
First observe that if $\gamma$ starts (or ends) in the interior
of an edge $e$ of~$Y$, then by the length minimality assumption
$\gamma$ hits $e$ at angle $\frac{\pi}{2}$. Let $\sigma, \tau$
be the simplices of $X$ in the interior of which $\gamma$
starts and ends. We consider the \emph{double} of $X$ along
$\sigma\cup \tau$, which is the following complex. Let $X'$ be
a copy of $X$ and let $\sigma',\tau', \gamma'$ be the copies of
$\sigma, \tau, \gamma$ in $X'$. The \emph{double} $X\cup X'$ is
the amalgam of $X$ and $X'$ along $\sigma\cup \tau$ identified
with $\sigma'\cup \tau'$. Then the closed path $\gamma\cup
\gamma'$ is a local geodesic in $X\cup X'$ of length $<2\pi$.
Thus $\gamma\cup \gamma'$ is contained in one of the galleries
listed in \cite[p.157]{EC}. The fact that such a gallery in
$X\cup X'$ is the double of a gallery in $X$ restricts its
form. M\"{o}bius galleries are excluded, and annular galleries
in Figure~\ref{fig_annular} are doubles of rectangular
galleries in Figure~\ref{fig_3}. The beads~$D,E$ are not
doubles in the sense that they do not admit a symmetry
interchanging their endpoints and fixing an edge pointwise.
Consequently, necklace galleries are doubles of exactly these
galleries that we have listed in the statement of the
corollary.
\end{proof}

A \emph{fan} of triangles at a vertex $y\in X$ is a subcomplex of $\st_X(y)$ that is a join of $y$ with a path graph in $\lk_X(y)$. The \emph{length} of the fan is the length of the path graph. An \emph{outer fan} of a subcomplex $Y\subset X$ at a vertex $y\in Y$ is a fan whose path graph is disjoint from $\lk_Y(y)\subset \lk_X(y)$ except at its endpoints.

We assume again that $X$ is the Coxeter realization of a building, with the metric $d$ defined in the first paragraph of the appendix.

\begin{defin}
A simplicial map $\varphi\colon Y\rightarrow X$ is a \emph{local embedding} at a vertex $y\in Y$ if $\varphi_{|\st_Y(y)}$ is an embedding.
The map $\varphi$ is \emph{strongly locally convex} at a vertex $y\in Y$, if it is a local embedding at $y$ and all of the following conditions hold.
We pull back the vertex types and angles from $X$ to $Y$ via~$\varphi$.
\begin{itemize}
\item
The map $\varphi$ is \emph{locally convex} at $y$, i.e.\ if the angle at $y$ is $\frac{\pi}{n}$, then each outer fan of triangles of $\varphi\big(\st_Y(y)\big)\subset \st_X(\varphi(y))$ at $\varphi(y)$ has length~$\geq n$.
\item
If $l\geq 3$ and the type of $y$ is $\bf l$, then each outer fan of triangles of $\varphi\big(\st_Y(y)\big)\subset \st_X(\varphi(y))$ at $\varphi(y)$ has length $\geq 3$.
\item
If $l=2$ and the type of $y$ is $\bf m$, then each outer fan of triangles of $\varphi\big(\st_Y(y)\big)\subset \st_X(\varphi(y))$ at $\varphi(y)$ has length $\geq 7$.
\end{itemize}
The map $\varphi$ is a \emph{local embedding} (respectively, \emph{locally convex, strongly locally convex}) if it is a local embedding (respectively, locally convex, strongly locally convex) at every vertex $y\in Y$.
\end{defin}

\begin{thm}
\label{thm:app:convex}
Let $Y$ be a connected simplicial complex. Suppose that $\varphi\colon Y\rightarrow X$ is strongly locally convex.
Then $\varphi$ is an embedding and $\varphi(Y)\subset X$ is $\pi$--convex.
\end{thm}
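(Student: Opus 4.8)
The argument will parallel the proof of Theorem~\ref{thm:app:CAT(0)}, with the appeal to Bowditch's criterion replaced by an Arzel\`a--Ascoli minimisation and with Corollary~\ref{cor:galleries} as the classification input. First I would note that the first item in the definition of strong local convexity already forces $\varphi$ to be a local isometry: at every vertex $y$ it says exactly that $\varphi(\lk_Y(y))$ sits inside $\lk_X(\varphi(y))$ so that every complementary path has length $\geq\pi$, so, since $\st_X(\varphi(y))$ is $\mathrm{CAT}(1)$ and $\varphi_{|\st_Y(y)}$ is an embedding, the star $\st_Y(y)$ maps isometrically onto a locally convex subcomplex of it, by the argument for \cite[Prop~2.2]{BW}. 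Along the interiors of edges and triangles of $\varphi(Y)$ local convexity is automatic (there the local model is a sub-book of a book, respectively all of $X$), so $\varphi(Y)$ is a connected, locally convex subcomplex of the $\mathrm{CAT}(1)$ space $X$ (Theorem~\ref{thm:app:CAT(0)}). It is also worth recording that, pulling the metric back along $\varphi$, the vertex links of $Y$ become subgraphs of the girth-$\geq 2\pi$ links of $X$, hence themselves have girth $\geq 2\pi$; in particular $Y$ carries no short closed combinatorial geodesic.

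Suppose, for contradiction, that $\varphi$ is not an embedding or that $\varphi(Y)$ is not $\pi$--convex. A failure of $\pi$--convexity, after passing to an innermost subarc, produces a geodesic of $X$ of length $<\pi$ whose two endpoints lie on $\varphi(Y)$ and whose interior avoids $\varphi(Y)$. A failure of injectivity, after taking a shortest path in $Y$ (in the pulled-back metric) between two points with the same image, produces either a closed local geodesic of $X$ of length $<2\pi$ -- impossible, since $X$ is $\mathrm{CAT}(1)$ -- or, localising near the bad point, again such a geodesic segment. By cocompactness of the group of type preserving automorphisms of $X$ and the Arzel\`a--Ascoli theorem, among all such segments there is one, $\gamma$, of minimal length; this $\gamma$ cannot be perturbed -- even with its endpoints sliding within $\varphi(Y)$ -- to a shorter path disjoint from $\varphi(Y)$ except at its endpoints, and where it enters $\varphi(Y)$ through the interior of an edge it does so at angle $\frac{\pi}{2}$.

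Now Corollary~\ref{cor:galleries} (whose doubling argument applies verbatim to the metric $d$ on $X$, with $\varphi(Y)$ in the role of the subcomplex) places $\gamma$ in one of the rectangular galleries of Figure~\ref{fig_3} or in one of the necklace galleries $B, B^2, BA, BA\sqrt C, A, A^2, A^3, A^2\sqrt C, A\sqrt C, AC, \sqrt{C}, \sqrt{C}\sqrt{C}, \sqrt C C, C$ listed there. Eliminating all of these is the main work. A gallery containing a bead $D$ or $E$ is excluded at once, since the subpath of $\gamma$ inside that bead could be homotoped, rel endpoints and through paths of the same length, into its boundary, contradicting minimality. For each remaining gallery one identifies the vertices of $\varphi(Y)$ lying on the end-simplices of $\gamma$ and the combinatorial angle the gallery occupies at them; since the gallery has a bounded number of triangles (at most six), this angle is small, whereas the second and third items of strong local convexity force the outer fan of $\varphi(\st_Y(y))$ at such a vertex $y$ to have length $\geq 3$ when $y$ has type $\bf l$ and $l\geq 3$, and length $\geq 7$ when $y$ has type $\bf m$ and $l=2$ -- a contradiction. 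When $l=2$ the length-four edge-cycles of type $\bf{lklk}$ and $\bf{lmlm}$ that could appear on the boundary of a rectangular gallery are excluded instead by Corollary~\ref{cor:convexstar} and Lemma~\ref{le:2kloops}, exactly as in the proof of Theorem~\ref{thm:app:CAT(0)}; and the possibility that $\gamma$ is a pure edge-path (composed only of the edges $A$ and $B$) is excluded by strong local convexity together with the girth of the vertex links of $X$. With every gallery ruled out, no minimal bad $\gamma$ exists; hence $\varphi(Y)$ is $\pi$--convex, and re-running the injectivity reduction (where a violation would now be trapped in one of the same galleries) shows that $\varphi$ is an embedding.

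The genuine obstacle is this last step, the gallery-by-gallery elimination. For each gallery one must convert ``$\gamma$ is a minimal, non-perturbable geodesic meeting $\varphi(Y)$ only at its endpoints'' into an exact inequality on outer-fan lengths, and then check that the three thresholds in the definition of strong local convexity -- in particular the value $7$ at type $\bf m$ vertices when $l=2$, which is calibrated precisely so that a bead $C$ cannot close up inside $\varphi(Y)$ -- are exactly what is needed. This is the point at which every hypothesis of the theorem is used, and the bookkeeping, which mirrors the apartment-level computation in the proof of Theorem~\ref{thm:app:CAT(0)}, is the heaviest part of the argument.
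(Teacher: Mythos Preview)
Your outline has a genuine gap at the gallery-elimination step, and the mechanism you invoke does not do the work you claim. The outer-fan conditions in the definition of strong local convexity constrain fans that start \emph{and end} in $\varphi(\lk_Y(y))$; they say nothing about a single excursion of $\gamma$ leaving $Y$ at a vertex $y$ and not returning there. So when you write that ``the combinatorial angle the gallery occupies'' at an endpoint vertex is small while the outer fan has length $\geq 3$ (or $\geq 7$), there is no actual comparison being made: the gallery at $y$ is not an outer fan, and neither local convexity nor the strengthened thresholds at type $\bf l$ or type $\bf m$ vertices bound it. For instance, for the gallery $A^3$ with endpoints $y_0,y_3\in Y$ and interior vertices $y_1,y_2\notin Y$, nothing in strong local convexity prevents the edge $y_0y_1$ from leaving $Y$; to obtain a contradiction one needs a vertex of $Y$ adjacent to $y_1$ (and to $y_2$), and this is exactly what the paper extracts from systolic $3$--convexity and the Projection Lemma \cite[Lem~7.7]{JS}. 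Your appeal to Corollary~\ref{cor:convexstar} and Lemma~\ref{le:2kloops} for the rectangular galleries is also misplaced: those results exclude closed cycles in $X$, whereas the rectangular galleries of Corollary~\ref{cor:galleries} have their heavy edges in $Y$ and do not close up.

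The paper's route is substantially different from what you propose. It first translates strong local convexity into \emph{local $3$--convexity} in a systolic complex --- in $X$ itself when $l\geq 3$, and in the systolization $\sys X$ (after extending $Y$ to a suitable $\sys Y$) when $l=2$. This combinatorial convexity, via \cite[Prop~4.3 and Lem~7.2(3)]{JS}, immediately gives that $\varphi$ is an embedding and that the image is globally $3$--convex. Only then does the gallery classification enter: $3$--convexity already kills the rectangular galleries and those with at most two maximal simplices, and for the remaining necklace types the Projection Lemma produces a vertex of $Y$ (or $\sys Y$) adjacent to an interior edge of the gallery, forcing $\gamma$ to fail local geodesity. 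Your embedding argument is separately problematic: a shortest path in $Y$ between two $\varphi$--preimages need not have length $<2\pi$, so its image need not be a short closed local geodesic in $X$; the paper bypasses this entirely by using the combinatorial embedding criterion \cite[Prop~4.3]{JS}. The thresholds $3$ and $7$ are indeed calibrated precisely, but their role is to make the passage to local $3$--convexity go through (in the Claim inside the paper's proof), not to rule out galleries directly.
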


In the proof we will translate strong local convexity to the following combinatorial convexity from \cite{JS}.
A subcomplex $Y$ of a flag simplicial complex~$X$ is \emph{$3$--convex} if it is full and every vertex $x\in X$ adjacent to nonadjacent vertices $y,y'\in Y$ lies in $Y$. In other words, there is no geodesic edge-path of length $1$ or $2$ disjoint from $Y$ except at its endpoints.
A local embedding $\varphi\colon Y\rightarrow X$ is \emph{locally $3$--convex} if $\varphi\big(\lk_Y(y)\big)\subset \lk_X(\varphi(y))$ is $3$--convex for every vertex $y\in Y$.

\begin{proof}[Proof of Theorem~\ref{thm:app:convex}]
Consider first the case where $l\geq 3$. Then $X$ is systolic. Since $\varphi$ is strongly locally convex and $l\geq 3$, the outer fans of $\varphi\big(\st_Y(y)\big)\subset \st_X(\varphi(y))$ at $\varphi(y)$ for any vertex $y\in Y$ have length $\geq 3$. Hence $\varphi$ is locally $3$--convex. By \cite[Prop~4.3]{JS}, the map $\varphi$ is an embedding and we can and will identify $Y$ with $\varphi(Y)$. Moreover, $Y\subset X$ is $3$--convex by \cite[Lem~7.2(3)]{JS}.

If $Y\subset X$ is not $\pi$--convex, then there is a geodesic of length~$<\pi$ disjoint from $Y$ except at its endpoints.
The infimum of lengths of such geodesics is bounded away from zero by the local convexity of $Y$.
By Arzel\'a--Ascoli theorem, this infimum is attained by a geodesic $\gamma$. Then $\gamma$ satisfies the hypothesis of Corollary~\ref{cor:galleries}.
In this application we identify the two types $\bf m,k$ to one type $\bf k$. Thus $\gamma$ is contained in a gallery $\mathcal{G}$ that is of the form listed in Corollary~\ref{cor:galleries}. Denote by $\sigma,\tau$ the simplices of $Y$ in the interior of which lie the endpoints of~$\gamma$.

Since $Y\subset X$ is $3$--convex, $\mathcal{G}$ is neither rectangular nor consists of $\leq 2$ maximal simplices. Thus $\mathcal{G}$ has the form $BA\sqrt C,A^3,A^2\sqrt C, AC$ or $\sqrt C C$. In the first three cases, let $e$ be the edge of $\mathcal{G}$ disjoint from $Y$, which is the short edge $A$. By the Projection Lemma \cite[Lem~7.7]{JS} applied to $e$, there is a vertex $y\in Y$ forming a triangle with $e$. By \cite[Lem~7.7]{JS} applied to the endpoints of $e$, the vertex $y$ is also adjacent to $\sigma$ and $\tau$. In particular, by the $2$--dimensionality of $X$, we have that $\sigma$ and $\tau$ are vertices. Thus at the vertex $x$ of $e$ of the type distinct from $\bf l$ the path $\gamma$ passes between two triangles sharing the edge $xy$. This contradicts the fact that $\gamma$ is a local geodesic at~$x$.

If $\mathcal{G}$ has the form $AC$ or $\sqrt C C$, then the middle triangle $\delta$ of $\mathcal{G}$ intersects $Y$: otherwise applying \cite[Lem~7.7]{JS} to $\delta$ would contradict the $2$--dimensionality of $X$. Thus a vertex $y$ of $\delta$ belongs to $Y$. Let $x\in \delta$ be the vertex of type $\bf l$. By \cite[Lem~7.7]{JS} applied to $x$, the vertex $y$ is adjacent to~$\sigma$. In particular $\sigma$ is a vertex, so that $\mathcal{G}$ has the form $AC$. This contradicts the fact that $\gamma$ is a local geodesic at $x$.

We now treat the case where $l=2$. Consider the systolization $X\subset \sys X$ from Construction~\ref{def:sys2}. We extend $Y$ to a complex $\sys Y$ mapping to $\sys X$ in the following way.

First, for each vertex $w\in Y$ of type $\bf 2$, let $N(w)\subset Y$ be the join of $w$ with the discrete set of its neighbors of type $\bf m$ in $Y$. We attach to $Y$ along $N(w)$ the join of $N(w)$ and a copy of the discrete set of neighbors of $\varphi(w)$ of type $\bf k$ that are outside $\varphi\big(\st_Y(w)\big)$. We denote this extension of $Y$ by $Y'$. Note that $\varphi$ extends in an obvious way to a local embedding $\varphi\colon Y'\rightarrow X$. For each vertex $y\in Y'$, since $\varphi_{|Y}$ was strongly locally convex, we have that $\varphi$ is
locally convex at $y$. Moreover, if $y$ is of type~$\bf m$, then each outer fan of triangles of $\varphi\big(\st_{Y'}(y)\big)\subset \st_X(\varphi(y))$ at $\varphi(y)$ has length $\geq 6$. This follows from the fact that this fan is contained in an outer fan of $\varphi\big(\st_{Y}(y)\big)$, and its path graph is obtained from the path graph of the latter fan by removing the extreme edges if they end with a vertex of type $\bf 2$.

Next, consider again each vertex $w\in Y$ of type $\bf 2$ and $N(w)\subset Y$. Let $\mathcal{V}(w)\subset Y'$ denote the set of neighbors of $w$ of type~$\bf k$. Attach to $Y'$ along the join of $N(w)$ with $\mathcal{V}(w)$ the join of $N(w)$ with the simplex spanned on $\mathcal{V}(w)$. We denote this extension of $Y'$ by $\sys Y$.
We keep the notation $\varphi$ for the obvious extension $\varphi\colon \sys Y\rightarrow \sys X$.

\begin{claim} The map $\varphi\colon \sys Y\rightarrow \sys X$ is locally $3$--convex.
\end{claim}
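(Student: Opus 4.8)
The plan is to verify local $3$--convexity of $\varphi\colon \sys Y\rightarrow \sys X$ vertex by vertex, splitting into the vertex types $\bf 2, m, k$ and using the combinatorial translation of strong local convexity. Recall $\varphi$ is locally $3$--convex at $y$ iff $\varphi(\lk_{\sys Y}(y))$ is $3$--convex in $\lk_{\sys X}(\varphi(y))$, i.e.\ full and with no length $1$ or $2$ geodesic edge-path leaving it except at endpoints. The key tool is that an outer fan of $\varphi(\st_{\sys Y}(y))$ of length $\geq n$, where the angle at $y$ is $\frac{\pi}{n}$, means precisely that the image link contains no chord of a too-short edge-path, so the fan-length bounds we accumulated while building $Y\to Y'\to \sys Y$ should be exactly what is needed. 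I would first record, using Construction~\ref{def:sys2} and the fan-length statements proved just above the claim, what the links $\lk_{\sys X}(\varphi(y))$ and $\varphi(\lk_{\sys Y}(y))$ actually look like for each vertex type.

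First I would dispose of the vertices of type $\bf 2$. By Construction~\ref{def:sys2}, $\lk_{\sys X}$ of a type $\bf 2$ vertex is a join of a simplex (on the type $\bf k$ neighbors) with a discrete set (the type $\bf m$ neighbors), and $\varphi(\lk_{\sys Y}(w))$ is a subjoin of the same shape by the very construction of $\sys Y$ (we coned off the type $\bf k$ neighbors and added exactly the missing ones at type $\bf 2$ vertices). A sub-join of simplex-$*$-discrete that contains the relevant simplex spanned on its type $\bf k$ vertices is automatically full, and any edge-path of length $\leq 2$ from it to itself in such a join must already lie in it, since the only way to leave the simplex part is via a type $\bf m$ vertex, which is adjacent to every type $\bf k$ vertex. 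So $3$--convexity at type $\bf 2$ vertices is immediate.

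Next, the vertices of type $\bf m$. Here the crucial input is the sentence just proved: after passing to $Y'$, every outer fan of $\varphi(\st_{Y'}(y))$ at a type $\bf m$ vertex has length $\geq 6$, because the extra edges of $\sys X$ at type $\bf 2$ vertices truncate the ``bad'' length $\geq 7$ fan that strong local convexity gave us in $X$ by at most one edge on each side. Passing from $Y'$ to $\sys Y$ does not touch stars of type $\bf m$ vertices. The link $\lk_{\sys X}(\varphi(y))$ of a type $\bf m$ vertex is the systolization of a $(2,k,?)$-type rank $2$ complex, and a length $\geq 6$ outer fan condition translates to: no chord joining a pair of vertices at distance $2$ along the boundary path of the link. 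Combined with $\varphi_{|\st_Y(y)}$ being an embedding and fullness, which follow from strong local convexity of $\varphi_{|Y}$, this gives exactly $3$--convexity of $\varphi(\lk_{\sys Y}(y))$ in $\lk_{\sys X}(\varphi(y))$.

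Finally, the vertices of type $\bf k$ are the main obstacle, since their links contain all three vertex types and the new simplices on type $\bf 2$ vertices inside $\sys Y$ genuinely alter the link. At a type $\bf k$ vertex $v$, the link $\lk_{\sys X}(\varphi(v))$ is a $(2,m,k)$-type systolized complex as analyzed in Lemma~\ref{lem:vertexlinks} via the retraction of Lemma~\ref{le:collaps}. I would argue that $\varphi(\lk_{\sys Y}(v))$ is the preimage under that retraction of $\varphi(\lk_Y(v))$ (suitably understood), which reduces $3$--convexity in the systolized link to $3$--convexity in the original link $\lk_X(\varphi(v))$ — and the latter is exactly the locally $3$--convex hypothesis on $\varphi_{|Y}$ in the $l\geq 3$ sense, obtained from strong local convexity of $\varphi_{|Y}$ since here the relevant local angles are $\frac{\pi}{n}$ with $n\geq 3$. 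The care needed is bookkeeping: checking that no geodesic of length $2$ in $\lk_{\sys X}(\varphi(v))$ can escape $\varphi(\lk_{\sys Y}(v))$ through one of the newly added type $\bf k$ vertices or the new simplices, which one handles by tracing such a path through the Lemma~\ref{le:collaps} retraction and invoking $3$--convexity downstairs together with the fan-length bounds at the type $\bf 2$ and type $\bf m$ vertices already established. I expect verifying this step — ensuring the extension $\sys Y$ was built ``just large enough'' that escape routes through the new cells are blocked — to be the heart of the argument.
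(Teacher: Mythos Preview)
Your plan has the right architecture---split by vertex type, use the fan-length bounds accumulated in building $Y\to Y'\to \sys Y$, and for type $\bf k$ vertices reduce via the retraction $f$ of Lemma~\ref{le:collaps} to $3$--convexity in the unsystolized link---and this matches the paper. But there is a genuine gap: you never verify that $\varphi\colon \sys Y\to \sys X$ is a \emph{local embedding}, which the definition of locally $3$--convex presupposes. This is not automatic. At a type $\bf k$ vertex $v$ with two distinct type $\bf 2$ neighbors $w',w''\in Y$, the passage to $\sys Y$ joins $v$ to all of $\mathcal{V}(w')\cup\mathcal{V}(w'')$, and one must rule out distinct $v'\in\mathcal{V}(w')$, $v''\in\mathcal{V}(w'')$ with $\varphi(v')=\varphi(v'')$. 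The paper handles this first, via Corollary~\ref{cor:convexstar}: two type $\bf k$ vertices determine their common type $\bf 2$ neighbor uniquely, so $\varphi(v')=\varphi(v'')$ forces $\varphi(w')=\varphi(w'')$, contradicting that $\varphi_{|Y'}$ is a local embedding at $v$. Until this is established, statements like ``$\varphi(\lk_{\sys Y}(v))$ is the preimage under that retraction'' do not even make sense, since the image link is not well-defined.

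There is also a factual slip at type $\bf m$: passing from $Y'$ to $\sys Y$ \emph{does} change those stars. For each type $\bf 2$ vertex $w$ with $u\in N(w)$ one attaches the join of $N(w)$ with the simplex on $\mathcal{V}(w)$, so the type $\bf k$ vertices in $\lk_{\sys Y}(u)$ sharing a type $\bf 2$ neighbor now span simplices. This is precisely what makes $\varphi(\lk_{\sys Y}(u))$ full in $\sys X_m$ and contain each entire cone over a simplex of $V$ once it contains the cone-point of type $\bf 2$. The paper then reduces to $3$--convexity of $\varphi(\lk_{\sys Y}(u))\cap V$ inside $V$, where a violating length-$1$ or length-$2$ geodesic would correspond to an outer fan in $X_m$ of length $2$ or $4$, excluded by the $\geq 6$ bound you correctly identified.
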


\begin{proof}
We begin with showing that $\varphi$ is a local embedding.
Since $\varphi_{|Y'}$ is a local embedding, for a vertex $v\in Y'$ of type $\bf k$ we only need to exclude the possibility that it is connected by distinct edges in $\sys Y$ to $v',v''$ of type~$\bf k$, satisfying $\varphi(v')=\varphi(v'')$. Let $w',w''\in Y$ be the vertices of type~$\bf 2$ with $v,v'\in \mathcal{V}(w')$ and $v,v''\in \mathcal{V}(w'')$. Note that $w'\neq w''$ since $\varphi_{|Y'}$ is a local embedding at $w'$. However, by Corollary~\ref{cor:convexstar} applied to $\varphi(v)$ and $\varphi(v')=\varphi(v'')$, we have $\varphi(w')=\varphi(w'')$. This contradicts the fact that $\varphi_{|Y'}$ is a local embedding at~$v$. Thus $\varphi$ is a local embedding at vertices of type $\bf k$. In particular there are no double edges of type $\bf k-k$ in $\sys Y$ and consequently $\sys Y$ is a simplicial complex. At a vertex of type $\bf 2$ or $\bf m$ the fact that $\varphi$ is a local embedding follows immediately from the fact that $\varphi_{|Y'}$ is a local embedding and the fact that $\sys Y$ is a
simplicial complex.

We now prove that $\varphi$ is locally $3$--convex. We first consider a vertex $w\in Y$ of type $\bf 2$ and the link $\lk_{\sys{X}}(\varphi(w))=\sys X_2$ from the proof of Lemma~\ref{lem:vertexlinks}. Recall that $\sys X_2$ is a join of a simplex of vertices of type~$\bf k$ with a discrete set of vertices of type~$\bf m$. By the construction of $Y'$ and $\sys{Y}$, the entire simplex of vertices of type~$\bf k$ is contained in $\varphi\big(\lk_{\sys{Y}}(w)\big)$. Thus $\varphi\big(\lk_{\sys{Y}}(w)\big)\subset \lk_{\sys{X}}(\varphi(w))$ is $3$--convex.

Next, consider a vertex $v\in Y'$ of type $\bf k$. Since the angle at~$v$ is $\frac{\pi}{3}$, and $\varphi_{|Y'}$ is locally convex at $v$, we have that $\varphi\big(\lk_{Y'}(v)\big)\subset \lk_X(\varphi(v))=X_k$ is $3$--convex.
By the construction of $Y'$ and $\sys Y$, the subcomplex $\varphi\big(\lk_{\sys Y}(v)\big)\subset \lk_{\sys X}(\varphi(v))=\sys X_k$ coincides with the preimage of $\varphi\big(\lk_{Y'}(v)\big)$ under the map $f\colon \sys X_k\rightarrow X_k$ from the proof of Lemma~\ref{lem:vertexlinks}. Since $f$ satisfies the hypothesis of Lemma~\ref{le:collaps}, it is easy to see that the preimage under $f$ of a $3$--convex subcomplex is $3$--convex as well.

Finally, consider a vertex $u\in Y$ of type $\bf m$. As in the proof of Lemma~\ref{lem:vertexlinks}, the link $\sys X_m$ consists of a subcomplex $V$ of vertices of type $\bf k$ and cones over some simplices of $V$ with cone-point of type $\bf 2$. By the construction of $Y'$ and $\sys Y$, if $\varphi\big(\lk_{\sys Y}(u)\big)\subset \lk_{\sys X}(\varphi(u))=\sys{X}_m$ contains such a cone-point, then it contains the entire cone. Thus to show that $\varphi\big(\lk_{\sys Y}(u)\big)\subset \sys{X}_m$ is $3$--convex, it suffices to show that $\varphi\big(\lk_{\sys Y}(u)\big)\cap V\subset V$ is $3$--convex. A geodesic path of length $1$ or $2$ in $V$ that is disjoint from $\varphi\big(\lk_{\sys Y}(u)\big)$ except at its endpoints would correspond to an outer fan of $2$ or $4$ triangles of $\varphi\big(\lk_{Y'}(u)\big)\subset X_m$ at $\varphi(u)$. But we have excluded such fans using strong local convexity in the discussion after the definition of $Y'$.
\end{proof}

By the claim and \cite[Prop 4.3]{JS} it follows that $\varphi$ is an embedding. Thus by \cite[Lem~7.2(3)]{JS}, the image $\varphi(\sys Y)$ is $3$--convex in $\sys{X}$. From now on we identify $\sys Y$ with $\varphi(\sys Y)\subset \sys X$. We define $\gamma$ as in the case $l\geq 3$ and apply Corollary~\ref{cor:galleries} to restrict the form of the gallery $\mathcal{G}$ containing $\gamma$.

If $\mathcal{G}$ is rectangular or consists of $\leq 2$ maximal simplices, then it must lie in $\sys{Y}$ by the $3$--convexity of $\sys{Y}$. Then $\gamma$ lies in one of the cones over $N(w)$ attached to $Y$ to form $Y'$. But such a cone does not contain beads $C$ and $\sqrt{C}$. Moreover, in such a cone the only edge-path disjoint from $Y$ except at its endpoints passes through the cone point, which is of type $\bf k$ and where consequently this edge-path fails to be a local geodesic. Thus $\mathcal{G}$ consists of $3$ maximal simplices.

Since $l=2$, the bead $A$ cannot follow the bead $C$ or $\sqrt C$. Thus $\mathcal{G}$ has the form $A^3$.
Since $\gamma$ does not contain a length $2$ edge-path of type $\bf{k-l-m}$, it is of one of the types $\bf {2m2m, 2k2k}$.
If $\gamma$ is of type $\bf {2m2m}$, then $\gamma$ is disjoint from $\sys Y$ except at its endpoints and we argue using the Projection Lemma as in the case $l\geq 3$. If $\gamma$ is of type $\bf {2k2k}$, then denote by $w,v$ the endpoints of $\gamma$ of types $\bf 2,k$, respectively.
By the definition of $Y'$, the middle vertex $x$ of $\gamma$ of type $\bf k$ lies in $Y'$ and its unique neighbor of type $\bf 2$ in $Y$ is $w$. Since $\sys{Y}\subset \sys {X}$ is full, the edge $xv$ also lies in $\sys{Y}$. By the construction of $\sys{Y}$, the vertices $x$ and $v$ have a common neighbor of type $\bf 2$ in $Y'$, which thus needs to be $w$. Hence $\gamma$ backtracks, which is a contradiction.
\end{proof}

\begin{bibdiv}
\begin{biblist}

\bib{BC}{article}{
   author={Bandelt, Hans-J{\"u}rgen},
   author={Chepoi, Victor},
   title={Metric graph theory and geometry: a survey},
   conference={
      title={Surveys on discrete and computational geometry},
   },
   book={
      series={Contemp. Math.},
      volume={453},
      publisher={Amer. Math. Soc.},
      place={Providence, RI},
   },
   date={2008},
   pages={49--86}}

\bib{Bow}{article}{
   author={Bowditch, B. H.},
   title={Notes on locally ${\rm CAT}(1)$ spaces},
   conference={
      title={Geometric group theory},
      address={Columbus, OH},
      date={1992},
   },
   book={
      series={Ohio State Univ. Math. Res. Inst. Publ.},
      volume={3},
      publisher={de Gruyter, Berlin},
   },
   date={1995},
   pages={1--48}}

\bib{BH}{book}{
   author={Bridson, Martin R.},
   author={Haefliger, Andr{\'e}},
   title={Metric spaces of non-positive curvature},
   series={Grundlehren der Mathematischen Wissenschaften [Fundamental Principles of Mathematical Sciences]},
   volume={319},
   publisher={Springer-Verlag, Berlin},
   date={1999},
   pages={xxii+643}}

\bib{BW}{article}{
  author={Bux, Kai-Uwe},
  author={Witzel, Stefan},
   title={Local convexity in ${\rm CAT}(\kappa)$ spaces},
   date={2012},
  eprint={arXiv:1211.1871}}

\bib{Davis}{book}{
   author={Davis, Michael W.},
   title={The geometry and topology of Coxeter groups},
   series={London Mathematical Society Monographs Series},
   volume={32},
   publisher={Princeton University Press, Princeton, NJ},
   date={2008},
   pages={xvi+584}}

\bib{EC}{article}{
   author={Elder, Murray},
   author={McCammond, Jon},
   title={Curvature testing in 3-dimensional metric polyhedral complexes},
   journal={Experiment. Math.},
   volume={11},
   date={2002},
   number={1},
   pages={143--158}}

\bib{Els}{article}{
   author={Elsner, Tomasz},
   title={Flats and the flat torus theorem in systolic spaces},
   journal={Geom. Topol.},
   volume={13},
   date={2009},
   number={2},
   pages={661--698}}

\bib{E2}{article}{
   author={Elsner, Tomasz},
   title={Systolic groups with isolated flats},
   date={2010},
   status={submitted},
   eprint={http://www.math.uni.wroc.pl/~elsner/papers/isola
ted-flats.pdf}}

\bib{EP}{article}{
   author={Elsner, Tomasz},
   author={Przytycki, Piotr},
   title={Square complexes and simplicial nonpositive curvature},
   journal={Proc. Amer. Math. Soc.},
   volume={141},
   date={2013},
   number={9},
   pages={2997--3004}}

\bib{HMP}{article}{
   author={Hanlon, Richard Gaelan},
   author={Martinez-Pedroza, Eduardo},
   title={Lifting group actions, equivariant towers and subgroups of non-positively curved groups},
   date={2013},
   journal={Algebr. Geom. Topol.},
   status={to appear},
   eprint={arXiv:1307.2640}}

\bib{H}{article}{
   author={Haglund, Fr\'ed\'eric},
   title={Complexes simpliciaux hyperboliques de grande dimension},
   journal={Prepublication Orsay},
   number={71},
   date={2003}
   }

\bib{JS}{article}{
   author={Januszkiewicz, Tadeusz},
   author={{\'S}wi{\c{a}}tkowski, Jacek},
   title={Simplicial nonpositive curvature},
   journal={Publ. Math. Inst. Hautes \'Etudes Sci.},
   number={104},
   date={2006},
   pages={1--85}
   }

\bib{JS2}{article}{
   author={Januszkiewicz, Tadeusz},
   author={{\'S}wi{\c{a}}tkowski, Jacek},
   title={Filling invariants of systolic complexes and groups},
   journal={Geometry\&Topology},
   number={11},
   date={2007},
   pages={727--758}
   }

\bib{JS3}{article}{
   author={Januszkiewicz, Tadeusz},
   author={{\'S}wi{\c{a}}tkowski, Jacek},
   title={Non-positively curved developments of billiards},
   journal={J. Topol.},
   volume={3},
   date={2010},
   number={1},
   pages={63--80}}

\bib{Za}{article}{
   author={Zadnik, Ga\v{s}per},
   title={Finitely presented subgroups of systolic groups are systolic},
   date={2013},
   status={submitted},
   eprint={arXiv:13073839}}

\end{biblist}
\end{bibdiv}

\end{document}